\Crefname{ALC@unique}{Line}{Lines}
\crefname{hypothesis}{Hypothesis}{Hypotheses}
\title{Singular Value Decomposition of Dual Matrices and its Application to Traveling Wave Identification in the Brain
\thanks{Submitted to the editors DATE.
\funding{This work is partially supported by Science and Technology Commission of Shanghai Municipality (No. 23ZR1403000, 20JC1419500, 2018SHZDZX0). Y. Wei is supported by the National Natural Science Foundation of China under grant 12271108, Innovation Program of Shanghai Municipal Education Committee, and Medical Engineering Joint Fund of Fudan University.}
}
}
\author{Tong Wei\thanks{Institute of Science and Technology for Brain-Inspired Intelligence, Fudan University, Shanghai, P.R. China. (\email{20110850008@fudan.edu.cn, weitong0802kun@163.com}).}
\and Weiyang Ding\thanks{Corresponding author, Institute of Science and Technology for Brain-Inspired Intelligence and MOE Frontiers Center for Brain Science, Fudan University; Shanghai Center for Brain Science and Brain-Inspired Technology, Shanghai, P.R. China; Key Laboratory of	Computational Neuroscience and Brain-Inspired Intelligence (Fudan University), Ministry of Education, P.R. China; Zhangjiang Fudan International Innovation Center. (\email{dingwy@fudan.edu.cn}).}
\and Yimin Wei\thanks{School of Mathematical Sciences and Key Laboratory of Mathematics for Nonlinear Sciences, Fudan University, Shanghai, P.R. China. 
(\email{ymwei@fudan.edu.cn}).}
}
\begin{document}

\maketitle

% REQUIRED
\begin{abstract}
Matrix factorizations in dual number algebra, a hypercomplex system, have been applied to kinematics, mechanisms, and other fields recently.
We develop an approach to identify spatiotemporal patterns in the brain such as traveling waves using the singular value decomposition of dual matrices in this paper.
Theoretically, we propose the compact dual singular value decomposition (CDSVD) of dual complex matrices with explicit expressions as well as a necessary and sufficient condition for its existence.
Furthermore, based on the CDSVD, we report on the optimal solution to the best rank-$k$ approximation under a newly defined quasi-metric in the dual complex number system.
The CDSVD is also related to the dual Moore-Penrose generalized inverse.
Numerically, comparisons with other available algorithms are conducted, which indicate less computational costs of our proposed CDSVD.
In addition, the infinitesimal part of the CDSVD can identify the true rank of the original matrix from the noise-added matrix, but the classical SVD cannot.
Next, we employ experiments on simulated time-series data and a road monitoring video to demonstrate the beneficial effect of the infinitesimal parts of dual matrices in spatiotemporal pattern identification.
Finally, we apply this approach to the large-scale brain fMRI data, identify three kinds of traveling waves, and further validate the consistency between our analytical results and the current knowledge of cerebral cortex function.
\end{abstract}

% REQUIRED
\begin{keywords}
dual matrices, compact dual SVD, low-rank approximation, dual Moore-Penrose generalized inverse, traveling wave identification, brain dynamics, time-series data analysis
\end{keywords}

% REQUIRED
\begin{MSCcodes}
15A23, 15B33, 65F55
\end{MSCcodes}

\section{Introduction}
Time-series analysis is commonly applied to uncover the underlying causes of trends or systemic patterns over time. 
Classical singular value decomposition (SVD) or principal component analysis (PCA) \cite{PCA} sufficiently explains a high-dimensional time-series process by low-dimensional principal components through an orthogonal transformation.
Thus, only a tiny portion of uncorrelated components statistically carries a great deal of initial information, and the importance of each component is decreasing in turn.
However, although these principal components are uncorrelated with each other, they are not necessarily independent.
Since each component is rank-1 and has no specific physical meaning, we attempt to find combinations of different components with certain physical meanings, such as traveling waves, by augmenting additional, e.g. temporal, information.
%However, these components are not independent, there is a definite relationship between them when conducting different combinations.
%For instance, in a road monitoring video, the combination of its second and third principal components can extract the trajectory of the most visible car in the foreground (illustrated in \Cref{fig:car_traveling}), while the combination of the third and fourth is of no practical relevance.
%Hence, our aim is to find the potential relation, such as traveling waves, between principal components on the basis of PCA, or rather, singular value decomposition (SVD).

A natural idea is to 
%consider the role of time in the evolution of the data and thus 
augment the original data with differential terms concerning time by using a hypercomplex system.
The property that the square of the infinitesimal unit is equal to zero for dual numbers is precisely the same as the characteristic of the differential term.
Therefore, in this paper, we aim to explore the relationship, such as traveling waves, between principal components by revealing the SVD of dual matrices.
%Dual number is one of the most effective methods to describe time-series data, resulting from that its infinitesimal unit $\epsilon$ satisfies $\epsilon\neq 0$ but $\epsilon^2=0$, which exactly corresponds to the properties of the differential terms.

Dual numbers employed as a novel data structure are ubiquitous across a broad range of fields from kinematics and statics to dynamics \cite{fischer2017dual}.
In 1873, William Clifford first proposed dual numbers \cite{clifford1871preliminary}, which differ from real numbers in that they have extra infinitesimal parts.
Actually, infinitesimal parts of dual numbers can be regarded as perturbations of their standard parts, as well as derivatives concerning time for time-series data.
In addition, Eduard Study \cite{study1923grenzfall} applied dual numbers to measure the relative position of skew lines in three-dimensional space and defined a dual angle, whose standard and infinitesimal parts represent the angle and distance between lines respectively.
Later, dual algebra was further developed and has been widely used in robotics \cite{gu1987dual,pradeep1989use}, kinematic analysis \cite{angeles1998application,pennestri2009linear}, and spatial mechanisms \cite{cheng1996dual,cheng1997dual}.
In this case, more researchers pay attention to the fundamental theoretical properties of dual matrices, including the SVD \cite{gutin2021generalizations}, eigenvalues \cite{qidualquaternion}, low-rank approximation \cite{qi2022low}, dual Moore-Penrose generalized inverse \cite{angeles2012dual,qi2023moore,pennestri2018moore,udwadia2021dual,udwadia2021does,udwadia2020all,wang2021characterizations}, linear dual least squares \cite{leastsquares}, QR decomposition \cite{pennestri2007linear}, dual rank decomposition \cite{wang2022dual}, etc. 

Despite the immense ongoing efforts in studying properties and developing algorithms for the SVD of dual matrices, the intuitive explicit expressions are lacking and algorithms need urgent improvement. 
Some existing methods are intended for dual matrices with particular structures and suffer from structural redundancy as well as expensive costs. For instance, Pennestrì et al. \cite{pennestri2018moore} obtained the SVD of dual real full-column rank matrices by solving a redundant linear system involving Kronecker products. 
Additionally, Gutin \cite{gutin2021generalizations} developed two other SVD generalizations for square dual real matrices.
Other existing algorithms calculate the SVD of a dual matrix by computing the eigenvalue decomposition of its Gram matrix first \cite{pennestri2007linear}, which increases the complexity of algorithms. Recently, Qi et al. \cite{qi2022low} have revealed that a dual complex matrix can be reduced to a diagonal nonnegative dual matrix by unitary transformations. But its algorithm still needs to calculate eigenvalue decomposition first. 

Hence, we aim to reveal explicit expressions and efficient algorithms for the SVD of dual complex matrices. 
This motivates us to find necessary and sufficient conditions for the existence of the SVD first. 
We then convert the problem into an equivalent system of two matrix equations, considering the form of the SVD of dual complex matrices directly, rather than starting from the eigenvalue decomposition of Gram matrices. 
The highlight of solving the matrix equations lies in the reduced memory and time. 
For dual complex matrices failing to satisfy the SVD existence condition, we further propose the optimal low-rank approximation under a quasi-metric in the dual complex system, compared to the norm employed by Qi et al. \cite{qi2022low}.

%Low rank approximation, one of the most important applications of SVD, is also discussed in our study, where the optimal solution is shown explicitly under our newly defined norm of dual matrices, compared to the original norm defined by Qi et al. \cite{qi2022low}.

%\textcolor{red}{
%Classical SVD of matrices can obtain independent principle components and furthermore, the importance of these components can be ranked by the magnitude of singular values, which only extracts significant features without knowing the underlying associations of these features. However, these principal components, although independent of each other, can have extraordinary results in different combinations. For example, in a road monitoring video, the combination of its second and third principal components can extract the trajectory of only the most visible car in the foreground (illustrated in \Cref{fig:car_traveling}), while the combination of the third and fourth is of no practical relevance.} 
We have found that there exist pairs of similarities between standard parts and infinitesimal parts of singular vectors of rank-2 dual real matrices numerically and proved theoretically. 
Traveling waves are identified when this property is applied to large-scale brain fMRI data,
which can explain brain activity from an innovative aspect and verify the function of the corresponding cerebral cortex.
%First, we confirm the significance of infinitesimal parts in simulated time-series data and a road monitoring video, which are set as the first-order difference concerning time. Then traveling waves and standing waves are identified in large-scale brain fMRI data, which can explain the brain activity from an innovative aspect and verify the function of the corresponding cerebral cortex.

%Different from the independent principle components for classical SVD of matrices, 

The remainder of this paper is structured as follows. \Cref{sec:prelimi} begins by laying out some preliminary operation rules of dual complex matrices and some useful notations. \Cref{sec:CDSVD} is concerned with our main contributions to the compact dual singular value decomposition (CDSVD) of a dual complex matrix with explicit expressions and convenient algorithms. Low-rank approximation of a dual complex matrix under two different metrics is developed in \Cref{sec:low rank} and another related theoretical application of the CDSVD - the dual Moore-Penrose generalized inverse - is covered in \Cref{sec:DMPGI}. \Cref{sec:numerical} not only reveals the high efficiency of our proposed algorithm in comparison with other algorithms, but also claims the advantage of the CDSVD in extracting the true rank of the original matrix from the noisy matrix compared to the classical SVD. Traveling waves are identified in the simulation data, small-scale road monitoring video data and then large-scale brain fMRI data, which demonstrates the importance of the infinitesimal parts and validates the consistency between brain regions recognized to have streams and the corresponding cerebral cortex function. Finally, \Cref{sec:conclusions} gives a brief summary of our findings and discusses future research based on our methods.

\section{Preliminaries}\label{sec:prelimi}

In this section, we present several basic operations, properties, and notations of dual complex matrices. 

Let $\mathbb{D}$ denote the set of dual numbers. A dual number $p$ is denoted by
$p = p_{\rm{s}}+p_{\rm{i}}\epsilon\;$, where $p_{\rm{s}},p_{\rm{i}}\in\mathbb{R}$ and $\epsilon$ is the dual infinitesimal unit, satisfying $\epsilon\neq0,\; 0\epsilon=\epsilon0=0,\;1\epsilon=\epsilon1=\epsilon,\;\epsilon^2 = 0$. 
We refer respectively to $p_{\rm{s}}$ as the standard part, and $p_{\rm{i}}$ as the infinitesimal part of $p$. 
In addition, the infinitesimal unit $\epsilon$ is commutative in multiplication with complex numbers. If $p_{\rm{s}}\neq 0$, we say that $p$ is appreciable. Otherwise, $p$ is infinitesimal.

For a dual number $p$, when complex vectors $\bm{x}_{\rm{s}},\bm{x}_{\rm{i}}\in\mathbb{C}^n$ are substituted for the real numbers $p_{\rm{s}},p_{\rm{i}}$, $\bm{x} = \bm{x}_{\rm{s}}+\bm{x}_{\rm{i}}\epsilon$ is a dual complex vector and $\mathbb{DC}^{n}$ represents the set of $n$-by-1 dual complex vectors. 
Similarly, if $\bm{A}_{\rm{s}},\bm{A}_{\rm{i}}\in\mathbb{C}^{m\times n}$, then $\bm{A} = \bm{A}_{\rm{s}} + \bm{A}_{\rm{i}}\epsilon$ is a dual complex matrix and $\mathbb{DC}^{m\times n}$ stands for the set of $m$-by-$n$ dual complex matrices. The following proposition shows the preliminary properties of dual complex matrices.

\begin{proposition}\label{pro1.1}
Suppose that $\bm{A}=(a_{ij})\in\mathbb{DC}^{m\times n}$, $\bm{B}=\bm{B}_{\rm{s}} + \bm{B}_{\rm{i}}\epsilon\in\mathbb{DC}^{n\times p}$ and $\bm{C} = \bm{C}_{\rm{s}} + \bm{C}_{\rm{i}}\epsilon\in\mathbb{DC}^{n\times n}$. Then
\begin{enumerate}
    \item[${\rm{(i)}}$] The transpose of $\bm{A}$ is $\bm{A}^T = (a_{ji})$. The conjugate transpose of $\bm{A}$ is $\bm{A}^* =( \bar{a}_{ji}) $. Moreover, $(\bm{AB})^T = \bm{B}^T\bm{A}^T$, $(\bm{AB})^*=\bm{B}^*\bm{A}^*$.
    \item[${\rm{(ii)}}$] \label{ii} The dual matrix $\bm{C}$ is called diagonal if both $\bm{C}_{\rm{s}}$ and $\bm{C}_{\rm{i}}$ are diagonal matrices.
    \item[${\rm{(iii)}}$] The dual matrix $\bm{C}$ is called unitary if $\bm{C}^*\bm{C} = \bm{I}_n$, where $\bm{I}_n$ is an $n$-by-$n$ identity matrix. Moreover, $\bm{CC}^*=\bm{I}_n$.
    \item[${\rm{(iv)}}$]\label{iv} The dual matrix $\bm{C}$ is called invertible (nonsingular) if $\bm{CD} = \bm{DC} = \bm{I}_n$ for some $\bm{D}\in\mathbb{DC}^{n\times n}$. Such $\bm{D}$ is unique and denoted by $\bm{C}^{-1}$. In addition, $\bm{C}^{-1}=\bm{C}_{\rm{s}}^{-1}-\bm{C}_{\rm{s}}^{-1}\bm{C}_{\rm{i}}\bm{C}_{\rm{s}}^{-1}\epsilon $.
    \item[${\rm{(v)}}$]\label{v} Let $n\geq p$. The dual matrix $\bm{B}$ is said to have unitary columns if $\bm{B}^*\bm{B}=\bm{I}_n$. More specifically, $\bm{B}_{\rm{s}}$ is a complex matrix with unitary columns, and $\bm{B}_{\rm{s}}^*\bm{B}_{\rm{i}}+\bm{B}_{\rm{i}}^*\bm{B}_{\rm{s}}=\bm{O}_p$, where the notation $\bm{O}$ represents an all-zero matrix of appropriate size.
\end{enumerate}
\end{proposition}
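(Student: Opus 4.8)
The plan is to reduce every assertion to a statement about the standard and infinitesimal parts. Write each dual complex matrix as $\bm{X} = \bm{X}_{\rm s} + \bm{X}_{\rm i}\epsilon$ and expand all products using $\epsilon^2 = 0$, so that $(\bm{A}_{\rm s}+\bm{A}_{\rm i}\epsilon)(\bm{B}_{\rm s}+\bm{B}_{\rm i}\epsilon) = \bm{A}_{\rm s}\bm{B}_{\rm s} + (\bm{A}_{\rm s}\bm{B}_{\rm i}+\bm{A}_{\rm i}\bm{B}_{\rm s})\epsilon$; this is essentially the only computational tool needed. Since two dual complex matrices are equal iff their standard parts agree and their infinitesimal parts agree, each claim reduces to a pair of known identities about complex matrices.

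For (i), the transpose and conjugate-transpose formulas are immediate from the entrywise definitions, because transposition and entrywise conjugation act separately on $\bm{A}_{\rm s}$ and $\bm{A}_{\rm i}$. To obtain $(\bm{AB})^T = \bm{B}^T\bm{A}^T$ one expands $\bm{AB}$ as above, transposes the standard and infinitesimal parts using $(\bm{A}_{\rm s}\bm{B}_{\rm s})^T = \bm{B}_{\rm s}^T\bm{A}_{\rm s}^T$, etc., and matches the result with the expansion of $(\bm{B}_{\rm s}^T+\bm{B}_{\rm i}^T\epsilon)(\bm{A}_{\rm s}^T+\bm{A}_{\rm i}^T\epsilon)$; the conjugate-transpose version is identical with conjugations inserted. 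Item (ii) is a definition, so there is nothing to prove.

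For (iii) and (v), expanding $\bm{C}^*\bm{C}=\bm{I}_n$ (resp.\ $\bm{B}^*\bm{B}=\bm{I}_p$) and equating parts gives $\bm{C}_{\rm s}^*\bm{C}_{\rm s}=\bm{I}_n$ together with $\bm{C}_{\rm s}^*\bm{C}_{\rm i}+\bm{C}_{\rm i}^*\bm{C}_{\rm s}=\bm{O}$ (resp.\ $\bm{B}_{\rm s}^*\bm{B}_{\rm s}=\bm{I}_p$ and $\bm{B}_{\rm s}^*\bm{B}_{\rm i}+\bm{B}_{\rm i}^*\bm{B}_{\rm s}=\bm{O}_p$), which is exactly the stated description of the infinitesimal part. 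For the ``moreover'' in (iii), since $\bm{C}_{\rm s}$ is square with $\bm{C}_{\rm s}^*\bm{C}_{\rm s}=\bm{I}_n$ it is unitary, hence $\bm{C}_{\rm s}\bm{C}_{\rm s}^*=\bm{I}_n$; multiplying the mixed relation on the left by $\bm{C}_{\rm s}$ and on the right by $\bm{C}_{\rm s}^*$ yields $\bm{C}_{\rm i}\bm{C}_{\rm s}^*+\bm{C}_{\rm s}\bm{C}_{\rm i}^*=\bm{O}$, so $\bm{C}\bm{C}^* = \bm{C}_{\rm s}\bm{C}_{\rm s}^* + (\bm{C}_{\rm i}\bm{C}_{\rm s}^*+\bm{C}_{\rm s}\bm{C}_{\rm i}^*)\epsilon = \bm{I}_n$.

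For (iv), uniqueness is the usual argument: if $\bm{C}\bm{D}=\bm{D}\bm{C}=\bm{I}_n$ and $\bm{C}\bm{D}'=\bm{D}'\bm{C}=\bm{I}_n$, then $\bm{D}=\bm{D}(\bm{C}\bm{D}')=(\bm{D}\bm{C})\bm{D}'=\bm{D}'$. Taking the standard part of $\bm{C}\bm{D}=\bm{I}_n$ shows $\bm{C}_{\rm s}$ has a right inverse, hence is invertible with $\bm{D}_{\rm s}=\bm{C}_{\rm s}^{-1}$; the infinitesimal part then gives $\bm{C}_{\rm s}\bm{D}_{\rm i}+\bm{C}_{\rm i}\bm{D}_{\rm s}=\bm{O}$, i.e.\ $\bm{D}_{\rm i}=-\bm{C}_{\rm s}^{-1}\bm{C}_{\rm i}\bm{C}_{\rm s}^{-1}$. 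Conversely one substitutes $\bm{D}=\bm{C}_{\rm s}^{-1}-\bm{C}_{\rm s}^{-1}\bm{C}_{\rm i}\bm{C}_{\rm s}^{-1}\epsilon$ back in and verifies, by the same expansion, that both $\bm{C}\bm{D}=\bm{I}_n$ and $\bm{D}\bm{C}=\bm{I}_n$ hold. None of this presents a genuine obstacle; the only steps needing a little care are the two ``moreover'' claims — deriving $\bm{C}\bm{C}^*=\bm{I}_n$ from $\bm{C}^*\bm{C}=\bm{I}_n$ in (iii), and checking the candidate inverse on both sides in (iv) — since these go beyond a direct component comparison and genuinely use that $\bm{C}_{\rm s}$ is square.
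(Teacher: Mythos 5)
Your proof is correct: the paper states this proposition in its preliminaries without any proof, and your component-wise expansion using $\epsilon^{2}=0$ (equating standard and infinitesimal parts, with the extra care you note for the two ``moreover'' claims in (iii) and (iv)) is exactly the standard argument that is implicit there. The only remark worth making is that the defining identity in (v) should read $\bm{B}^{*}\bm{B}=\bm{I}_{p}$ rather than $\bm{I}_{n}$ (a typo in the statement, since $\bm{B}^{*}\bm{B}$ is $p\times p$), which your proof silently and correctly fixes.
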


\begin{proposition}[\cite{Golub1983MatrixC}]\label{pro1.2}
If $\bm{U}\in\mathbb{C}^{m\times n} (m>n)$ has unitary columns, then there exists $\widetilde{\bm{U}}\in\mathbb{C}^{m\times (m-n)}$ such that 
$\bigl[\bm{U}\;\;\widetilde{\bm{U}}\bigr]\in\mathbb{C}^{m\times m}$ is unitary.
\end{proposition}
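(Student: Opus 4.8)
The statement is the classical fact that any orthonormal system of $n$ vectors in $\mathbb{C}^m$ extends to an orthonormal basis, so the plan is to carry out exactly that extension. First I would observe that, since $\bm{U}$ has unitary columns, its $n$ columns $\bm{u}_1,\dots,\bm{u}_n$ form an orthonormal set in $\mathbb{C}^m$; in particular they are linearly independent, so $\Range(\bm{U})$ is an $n$-dimensional subspace of $\mathbb{C}^m$. Consider the orthogonal complement $\mathcal{V} = \Range(\bm{U})^{\perp}$ with respect to the standard Hermitian inner product. Because $\mathbb{C}^m = \Range(\bm{U}) \oplus \mathcal{V}$, we have $\dim \mathcal{V} = m - n$.

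Next I would choose any basis of $\mathcal{V}$ and apply the Gram--Schmidt process to it, obtaining an orthonormal basis $\bm{w}_1,\dots,\bm{w}_{m-n}$ of $\mathcal{V}$; collect these as the columns of $\widetilde{\bm{U}} = \bigl[\bm{w}_1\ \cdots\ \bm{w}_{m-n}\bigr]\in\mathbb{C}^{m\times(m-n)}$. By construction $\widetilde{\bm{U}}^*\widetilde{\bm{U}} = \bm{I}_{m-n}$, and each $\bm{w}_j$ lies in $\mathcal{V}$, hence is orthogonal to every column of $\bm{U}$, which gives $\bm{U}^*\widetilde{\bm{U}} = \bm{O}$ and $\widetilde{\bm{U}}^*\bm{U} = \bm{O}$. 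Writing $\bm{Q} = \bigl[\bm{U}\ \ \widetilde{\bm{U}}\bigr]\in\mathbb{C}^{m\times m}$, a block computation then yields
\begin{equation*}
\bm{Q}^*\bm{Q} = \begin{bmatrix} \bm{U}^*\bm{U} & \bm{U}^*\widetilde{\bm{U}} \\ \widetilde{\bm{U}}^*\bm{U} & \widetilde{\bm{U}}^*\widetilde{\bm{U}} \end{bmatrix} = \begin{bmatrix} \bm{I}_n & \bm{O} \\ \bm{O} & \bm{I}_{m-n} \end{bmatrix} = \bm{I}_m.
\end{equation*}

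Finally I would note that $\bm{Q}$ is square, so $\bm{Q}^*\bm{Q} = \bm{I}_m$ forces $\bm{Q}$ to be invertible with $\bm{Q}^{-1} = \bm{Q}^*$, whence also $\bm{Q}\bm{Q}^* = \bm{I}_m$; that is, $\bm{Q}$ is unitary, as required. There is no genuine obstacle here — the result is standard — but the one point deserving care is the dimension count $\dim \Range(\bm{U})^{\perp} = m - n$, which relies on $\bm{U}$ having full column rank $n$ (guaranteed by orthonormality of its columns), and the remark that for a \emph{square} matrix orthonormality of columns already implies unitarity, so no separate verification of $\bm{Q}\bm{Q}^* = \bm{I}_m$ is needed beyond invoking invertibility.
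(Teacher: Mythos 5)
Your proof is correct and is the standard argument: take the orthogonal complement of $\Range(\bm{U})$, orthonormalize a basis of it, and verify unitarity by the block computation $\bm{Q}^*\bm{Q}=\bm{I}_m$. The paper offers no proof of its own here — it simply cites this classical fact from the literature — and your argument is essentially the textbook one being invoked, so there is nothing further to reconcile.
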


Here, we propose new representative forms to denote dual complex matrices, which develop the equivalence of operations between matrices and dual matrices.

\begin{definition}\label{defi1.3}
Given a dual complex matrix $\bm{A} = \bm{A}_{\rm{s}}+\bm{A}_{\rm{i}}\epsilon\in\mathbb{DC}^{m\times n}$, its representative form is denoted as a block lower triangular Toeplitz matrix
\begin{align}
     \mathcal{R}(\bm{A})  := 
     \begin{bmatrix}
         \bm{A}_{\rm{s}}   &  \\
       \bm{A}_{\rm{i}}  & \bm{A}_{\rm{s}}
     \end{bmatrix}\;.
\end{align}
\end{definition}

The addition and multiplication operations of matrices are equivalent to those of dual matrices under the above representative forms. Suppose that $\bm{A},\bm{B}\in\mathbb{DC}^{m\times n}$ and $\bm{C}\in\mathbb{DC}^{n\times p}$. Then it is clear that $\mathcal{R}(\bm{A}+\bm{B})=\mathcal{R}(\bm{A})+\mathcal{R}(\bm{B})$ and $\mathcal{R}
(\bm{A}\bm{C}) = \mathcal{R}(\bm{A})\mathcal{R}(\bm{C})$. In essence, this works because the representation of $\epsilon$ has the form $\bigl[\begin{smallmatrix}
     \bm{O}   &  \\
      \bm{I}  & \bm{O}
    \end{smallmatrix}\bigr]$, which satisfies the properties of $\epsilon$. Note that for the representative form of a dual complex matrix $\bm{A}$, its conjugate transpose is $\bigl[\begin{smallmatrix}
     \bm{A}_{\rm{s}}^*   &  \\
      \bm{A}_{\rm{i}}^*  & \bm{A}_{\rm{s}}^*
\end{smallmatrix}\bigr]$ but not $\bigl[\begin{smallmatrix}
     \bm{A}_{\rm{s}}   &  \\
      \bm{A}_{\rm{i}} & \bm{A}_{\rm{s}}
\end{smallmatrix}\bigr]^*$.

%We adopt some notations here to conveniently introduce the theorems in the following sections. 
For a square complex matrix $\bm{X}$, ${{\rm{sym}}}(\bm{X})$ stands for $\bm{X}+\bm{X}^*$ and ${{\rm{Diag}}}(\bm{X})$ for a matrix with only diagonal elements of $\bm{X}$. On the other side, we denote $\bm{Y}\bm{Y}^*$ by $\mathcal{P}_{\bm{Y}}$ and $\bm{I}_m-\bm{Y}\bm{Y}^*$ by $\mathcal{P}_{{\bm{Y}}^\perp}$ for an $m$-by-$n$ complex matrix $\bm{Y}$. In addition, ${\rm{diag}}(x_1,x_2,\cdots,x_n)$ stands for the diagonal matrix whose diagonal elements are $x_1, x_2, \cdots, x_n$ and ${\rm{diag}}(\bm{X}_1,\bm{X}_2,\cdots,\bm{X}_n)$ for the block diagonal matrix whose diagonal blocks are square $\bm{X}_1, \bm{X}_2, \cdots, \bm{X}_n$. Moreover, $\bm{O}_{m\times n}$ and $\bm{1}_{m\times n}$ represent an all-zero and an all-one matrices of size $m$-by-$n$, respectively. Besides, $\odot$ represents the Hadamard product (elementwise product) of two matrices with the same size. 
For the sake of use, the following sets are defined. $\mathcal{B}_{rp}:=\{\bm{A}\in\mathbb{C}^{r\times r}|\bm{A} = {\rm{diag}}(\bm{A}_{11},\bm{A}_{22},\cdots,\bm{A}_{pp}), p\leq r,\bm{A}_{ii}\in\mathbb{C}^{r_i\times r_i}\; (i=1,2,\cdots,p)\}$, $\mathcal{B}_{rp}^{\mathcal{H}}:=\{\bm{A}\in\mathcal{B}_{rp}|\bm{A} = \bm{A}^*\}$ and $\mathcal{B}_{rp}^{\mathcal{S}}:=\{\bm{A}\in\mathcal{B}_{rp}|\bm{A} = -\bm{A}^*\}$.

\section{Compact Dual SVD}\label{sec:CDSVD}

A complex matrix can be deduced to a rectangular diagonal nonnegative matrix by unitary transformations on both sides \cite{Golub1983MatrixC}. The same is true for dual matrices. Although several studies \cite{pennestri2007linear,pennestri2018moore,qi2022low} have proposed some algorithms, either the eigenvalue decomposition of the Gram matrix is used or redundant linear systems involving Kronecker products are solved. 

This section proposes the compact dual singular value decomposition (CDSVD) of dual complex matrices with explicit expressions, straightforward proofs, and convenient algorithms. 
Specifically, given a dual complex matrix $\bm{A}=\bm{A}_{\rm{s}}+\bm{A}_{\rm{i}}
\epsilon\in\mathbb{DC}^{m\times n}$, If there exist $\bm{U}=\bm{U}_{\rm{s}}+\bm{U}_{\rm{i}}\epsilon\in\mathbb{DC}^{m\times r}$,  $\bm{V}=\bm{V}_{\rm{s}}+\bm{V}_{\rm{i}}\epsilon\in\mathbb{DC}^{n\times r}$ with unitary columns and diagonal positive $\bm{\Sigma}=\bm{\Sigma}_{\rm{s}}+\bm{\Sigma}_{\rm{i}}\epsilon\in\mathbb{DR}^{r\times r}$ such that $\bm{A} = \bm{U\Sigma V}^*$, then this is called a CDSVD of $\bm{A}$, where $\bm{A}_{\rm{s}}=\bm{U}_{\rm{s}}\bm{\Sigma}_{\rm{s}}\bm{V}_{\rm{s}}^*$ is the compact SVD of $\bm{A}_{\rm{s}}$.
%the CDSVD of $\bm{A}$ has the form $\bm{A} = \bm{U}\bm{\Sigma}\bm{ V}^*$, where $\bm{U}=\bm{U}_{\rm{s}}+\bm{U}_{\rm{i}}\epsilon\in\mathbb{DC}^{m\times r}$ and $\bm{V}=\bm{V}_{\rm{s}}+\bm{V}_{\rm{i}}\epsilon\in\mathbb{DC}^{n\times r}$ both have unitary columns and $\bm{\Sigma}=\bm{\Sigma}_{\rm{s}}+\bm{\Sigma}_{\rm{i}} \epsilon\in\mathbb{DR}^{r\times r}$ is diagonal positive. Moreover, $\bm{A}_{\rm{s}} = \bm{U}_{\rm{s}}\bm{\Sigma}_{\rm{s}}\bm{V}_{\rm{s}}^*$ is the compact SVD of $\bm{A}_{\rm{s}}$. 
A dual number is positive if its standard part is positive or its standard part is equal to zero and its infinitesimal part is positive \cite{qi2022dual}. 
Thus, in the CDSVD, $\bm{\Sigma}$ is diagonal positive if and only if $\bm{\Sigma}_{\rm{i}}$ is diagonal, since $\bm{\Sigma}_{\rm{s}}$ is always diagonal positive.

The theorem below provides a necessary and sufficient condition for the existence of the CDSVD and then presents expressions of $\bm{U}_{\rm{i}}$, $\bm{V}_{\rm{i}}$, and $\bm{\Sigma}_{\rm{i}}$.

%\begin{definition}
 %   Let $\bm{A}\in\mathbb{DC}^{m\times n} (m\geq n)$. If there exist $\bm{U}\in\mathbb{DC}^{m\times r}$,  $\bm{V}\in\mathbb{DC}^{n\times r}$ with unitary columns and diagonal positive $\bm{\Sigma}\in\mathbb{DR}^{r\times r}$ such that $\bm{A} = \bm{U\Sigma V}^*$, then this is called a compact dual singular value decomposition (CDSVD) of $\bm{A}$.
%\end{definition}

\begin{theorem}\label{the3.1summ}
Let $\bm{A}=\bm{A}_{\rm{s}}+\bm{A}_{\rm{i}}\epsilon\in\mathbb{DC}^{m\times n} (m\geq n)$. Assume $\bm{A}_{\rm{s}} = \bm{U}_{\rm{s}}\bm{\Sigma}_{\rm{s}} \bm{V}_{\rm{s}}^*$ is a compact SVD of $\bm{A}_{\rm{s}}$, where $\bm{U}_{\rm{s}}\in\mathbb{C}^{m\times r}$, $\bm{V}_{\rm{s}}\in\mathbb{C}^{n\times r}$ have unitary columns and $\bm{\Sigma}_{\rm{s}}={\rm{diag}}(\Tilde{\sigma}_1\bm{I}_{r_1},\Tilde{\sigma}_2\bm{I}_{r_2},\cdots,\Tilde{\sigma}_p\bm{I}_{r_p})\in\mathbb{R}^{r\times r}$ is diagonal positive with $\Tilde{\sigma}_1>\Tilde{\sigma}_2>\cdots>\Tilde{\sigma}_p>0$. Then the compact dual SVD (CDSVD) of $\bm{A}$ exists if and only if 
\begin{align}
    (\bm{I}_m-\bm{U}_{\rm{s}}\bm{U}_{\rm{s}}^*)\bm{A}_{\rm{i}}(\bm{I}_n-\bm{V}_{\rm{s}}\bm{V}_{\rm{s}}^*)=\bm{O}_{m\times n}\;.\label{333333}
\end{align}
Furthermore, if $\bm{A}$ has a CDSVD, then there exists a particular pair $(\bm{U}_{\rm{s}},\bm{V}_{\rm{s}})$ such that $\bm{A} = \bm{U}\bm{\Sigma}\bm{ V}^*$, where $\bm{U}=\bm{U}_{\rm{s}}+\bm{U}_{\rm{i}}\epsilon\in\mathbb{DC}^{m\times r}$ and $\bm{V}=\bm{V}_{\rm{s}}+\bm{V}_{\rm{i}}\epsilon\in\mathbb{DC}^{n\times r}$ both have unitary columns, and $\bm{\Sigma}=\bm{\Sigma}_{\rm{s}}+\bm{\Sigma}_{\rm{i}} \epsilon\in\mathbb{DR}^{r\times r}$ is diagonal positive.
Moreover, the expressions of $\bm{U}_{\rm{i}}, \bm{\Sigma}_{\rm{i}}, \bm{V}_{\rm{i}}$, the infinitesimal parts of $\bm{U},\bm{\Sigma}, \bm{V}$, are
\begin{subequations}
    \begin{align}
\bm{U}_{\rm{i}}&=\bm{U}_{\rm{s}}\left[{\rm{sym}}(\bm{R}\bm{\Sigma}_{\rm{s}})\odot\widetilde{\bm{\Delta}}+\widetilde{\bm{\Omega}}+\widetilde{\bm{\Psi}}\right]+(\bm{I}_m-\bm{U}_{\rm{s}}\bm{U}_{\rm{s}}^*)\bm{A}_{\rm{i}}\bm{V}_{\rm{s}}\bm{\Sigma}_{\rm{s}}^{-1}\;,\label{19a}\\
\bm{V}_{\rm{i}}&=\bm{V}_{\rm{s}}\Big[{\rm{sym}}(\bm{\Sigma}_{\rm{s}}\bm{R})\odot\widetilde{\bm{\Delta}}+\widetilde{\bm{\Omega}}\Big]+(\bm{I}_n-\bm{V}_{\rm{s}}\bm{V}_{\rm{s}}^*)\bm{A}_{\rm{i}}^*\bm{U}_{\rm{s}}\bm{\Sigma}_{\rm{s}}^{-1}\;,\label{19b}\\
\bm{\Sigma}_{\rm{i}} &= \frac{1}{2} {\rm{Diag}}\big({\rm{sym}}(\bm{R})\big) \;,\label{19c}
\end{align}
\end{subequations}
where $\bm{R}:=\bm{U}_{\rm{s}}^*\bm{A}_{\rm{i}}\bm{V}_{\rm{s}}$ and $\bm{R}_{ii}$ represents the $(i,i)$-block matrix of $\bm{R}$ partitioned as in $\bm{\Sigma}_{\rm{s}}$, $\bm{\widetilde{\Omega}}\in\mathcal{B}_{rp}^{\mathcal{S}}$ and 
\begin{align}
\bm{\widetilde{\Delta}} = \begin{bmatrix}
    \bm{O}_{r_1}  & \frac{\bm{1}_{r_1\times r_2}}{\Tilde{\sigma}_2^2-\Tilde{\sigma}_1^2} & \cdots & \frac{\bm{1}_{r_1\times r_p}}{\Tilde{\sigma}_p^2-\Tilde{\sigma}_1^2} \\
       \frac{\bm{1}_{r_2\times r_1}}{\Tilde{\sigma}_1^2-\Tilde{\sigma}_2^2}  & \bm{O}_{r_2} & \cdots & \frac{\bm{1}_{r_2\times r_p}}{\Tilde{\sigma}_p^2-\Tilde{\sigma}_2^2} \\
       \vdots & \vdots & \ddots & \vdots \\
       \frac{\bm{1}_{r_p\times r_1}}{\Tilde{\sigma}_1^2-\Tilde{\sigma}_p^2} & \frac{\bm{1}_{r_p\times r_2}}{\Tilde{\sigma}_2^2-\Tilde{\sigma}_p^2} & \cdots & \bm{O}_{r_p}
\end{bmatrix},
    \widetilde{\bm{\Psi}} =\begin{bmatrix}
        \frac{\bm{R}_{11}-\bm{R}_{11}^*}{2\Tilde{\sigma}_1} & &  \\
         &   \ddots & \\
         &   & \frac{\bm{R}_{pp}-\bm{R}_{pp}^*}{2\Tilde{\sigma}_p}
    \end{bmatrix}.\label{88}
\end{align}
\end{theorem}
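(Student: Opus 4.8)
The plan is to reduce $\bm{A}=\bm{U}\bm{\Sigma}\bm{V}^*$ to a short chain of ordinary complex matrix equations and solve them layer by layer. Expanding with $\epsilon^2=0$, the standard part is the prescribed compact SVD $\bm{A}_{\rm s}=\bm{U}_{\rm s}\bm{\Sigma}_{\rm s}\bm{V}_{\rm s}^*$, while the infinitesimal part is
\[
\bm{A}_{\rm i}=\bm{U}_{\rm i}\bm{\Sigma}_{\rm s}\bm{V}_{\rm s}^*+\bm{U}_{\rm s}\bm{\Sigma}_{\rm i}\bm{V}_{\rm s}^*+\bm{U}_{\rm s}\bm{\Sigma}_{\rm s}\bm{V}_{\rm i}^*.
\]
By Proposition~\ref{pro1.1}(v), requiring $\bm{U},\bm{V}$ to have unitary columns is exactly requiring $\bm{K}_U:=\bm{U}_{\rm s}^*\bm{U}_{\rm i}$ and $\bm{K}_V:=\bm{V}_{\rm s}^*\bm{V}_{\rm i}$ to be skew-Hermitian, and requiring $\bm{\Sigma}$ to be diagonal positive is, since $\bm{\Sigma}_{\rm s}$ is positive, exactly requiring $\bm{\Sigma}_{\rm i}$ to be real diagonal. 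I would then write $\bm{U}_{\rm i}=\bm{U}_{\rm s}\bm{K}_U+\mathcal{P}_{\bm{U}_{\rm s}^\perp}\bm{U}_{\rm i}$ and $\bm{V}_{\rm i}=\bm{V}_{\rm s}\bm{K}_V+\mathcal{P}_{\bm{V}_{\rm s}^\perp}\bm{V}_{\rm i}$, so the unknowns split into four mutually orthogonal blocks.

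Substituting this and projecting the infinitesimal equation onto the four ``corners'' (left by $\bm{U}_{\rm s}^*$ or $\mathcal{P}_{\bm{U}_{\rm s}^\perp}$, right by $\bm{V}_{\rm s}$ or $\mathcal{P}_{\bm{V}_{\rm s}^\perp}$) does the bulk of the work. The $(\mathcal{P}_{\bm{U}_{\rm s}^\perp},\mathcal{P}_{\bm{V}_{\rm s}^\perp})$ corner annihilates all three right-hand terms and leaves precisely \eqref{333333}; because $\bm{U}_{\rm s}\bm{U}_{\rm s}^*$ and $\bm{V}_{\rm s}\bm{V}_{\rm s}^*$ are the orthogonal projectors onto $\Range(\bm{A}_{\rm s})$ and $\Range(\bm{A}_{\rm s}^*)$, condition \eqref{333333} does not depend on the chosen compact SVD, which both proves necessity and makes the ``there exists a particular pair'' phrasing legitimate. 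The two mixed corners isolate $\mathcal{P}_{\bm{V}_{\rm s}^\perp}\bm{V}_{\rm i}$ and $\mathcal{P}_{\bm{U}_{\rm s}^\perp}\bm{U}_{\rm i}$; multiplying by $\bm{\Sigma}_{\rm s}^{-1}$ gives the two ``$\mathcal{P}^\perp$'' terms $(\bm{I}_n-\bm{V}_{\rm s}\bm{V}_{\rm s}^*)\bm{A}_{\rm i}^*\bm{U}_{\rm s}\bm{\Sigma}_{\rm s}^{-1}$ and $(\bm{I}_m-\bm{U}_{\rm s}\bm{U}_{\rm s}^*)\bm{A}_{\rm i}\bm{V}_{\rm s}\bm{\Sigma}_{\rm s}^{-1}$ appearing in \eqref{19b} and \eqref{19a}. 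The remaining $(\bm{U}_{\rm s}^*,\bm{V}_{\rm s})$ corner collapses to the core equation
\[
\bm{R}=\bm{K}_U\bm{\Sigma}_{\rm s}+\bm{\Sigma}_{\rm i}+\bm{\Sigma}_{\rm s}\bm{K}_V^*,\qquad \bm{R}:=\bm{U}_{\rm s}^*\bm{A}_{\rm i}\bm{V}_{\rm s},
\]
with unknowns $\bm{K}_U,\bm{K}_V$ skew-Hermitian and $\bm{\Sigma}_{\rm i}$ real diagonal.

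To solve the core equation I would partition everything conformally with $\bm{\Sigma}_{\rm s}=\diag(\tilde\sigma_1\bm{I}_{r_1},\dots,\tilde\sigma_p\bm{I}_{r_p})$. For $a\neq b$, block $(a,b)$ of the core equation together with the conjugate transpose of block $(b,a)$ form a $2\times 2$ linear system for $(\bm{K}_U)_{ab}$ and $(\bm{K}_V)_{ab}$ with determinant $\tilde\sigma_b^2-\tilde\sigma_a^2\neq 0$ --- this is exactly where the assumption $\tilde\sigma_1>\dots>\tilde\sigma_p$ enters; solving it (using $(\bm{\Sigma}_{\rm i})_{ab}=\bm{O}$) produces the Hadamard terms ${\rm sym}(\bm{R}\bm{\Sigma}_{\rm s})\odot\widetilde{\bm{\Delta}}$ and ${\rm sym}(\bm{\Sigma}_{\rm s}\bm{R})\odot\widetilde{\bm{\Delta}}$, and one checks the answer is consistent with skew-Hermiticity. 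For a diagonal block $(a,a)$, splitting the equation into Hermitian and skew-Hermitian parts shows the skew part forces $(\bm{K}_U)_{aa}-(\bm{K}_V)_{aa}=\tfrac{1}{2\tilde\sigma_a}(\bm{R}_{aa}-\bm{R}_{aa}^*)$ (this is $\widetilde{\bm{\Psi}}$, with $\widetilde{\bm{\Omega}}_a:=(\bm{K}_V)_{aa}$ left free among skew-Hermitian matrices), while the Hermitian part demands $(\bm{\Sigma}_{\rm i})_{aa}=\tfrac12{\rm sym}(\bm{R}_{aa})$, which is impossible unless ${\rm sym}(\bm{R}_{aa})$ is already diagonal. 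This is the crux, and the fix is to pick the ``particular pair'': replace $(\bm{U}_{\rm s},\bm{V}_{\rm s})$ by $(\bm{U}_{\rm s}\bm{Q},\bm{V}_{\rm s}\bm{Q})$ with $\bm{Q}=\diag(\bm{Q}_1,\dots,\bm{Q}_p)$ block-diagonal unitary; since $\bm{Q}$ commutes with $\bm{\Sigma}_{\rm s}$ this leaves $\bm{A}_{\rm s}=\bm{U}_{\rm s}\bm{\Sigma}_{\rm s}\bm{V}_{\rm s}^*$ intact and sends $\bm{R}_{aa}\mapsto\bm{Q}_a^*\bm{R}_{aa}\bm{Q}_a$, so choosing $\bm{Q}_a$ to diagonalize the Hermitian matrix ${\rm sym}(\bm{R}_{aa})$ makes ${\rm sym}(\bm{R}_{aa})$ diagonal and yields $\bm{\Sigma}_{\rm i}=\tfrac12{\rm Diag}({\rm sym}(\bm{R}))$, i.e.\ \eqref{19c}.

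The last step is assembly and verification: combining $\bm{U}_{\rm s}\bm{K}_U$ and $\bm{V}_{\rm s}\bm{K}_V$ with the orthogonal-complement pieces reproduces \eqref{19a}--\eqref{19c} with the free parameter $\widetilde{\bm{\Omega}}\in\mathcal{B}_{rp}^{\mathcal{S}}$; one then substitutes back to confirm $\bm{A}=\bm{U}\bm{\Sigma}\bm{V}^*$, uses Proposition~\ref{pro1.1}(v) to get $\bm{U}^*\bm{U}=\bm{V}^*\bm{V}=\bm{I}_r$ (the cross terms collapse to $\bm{K}_U+\bm{K}_U^*=\bm{O}$ and $\bm{K}_V+\bm{K}_V^*=\bm{O}$), and observes $\bm{\Sigma}$ is diagonal positive because $\bm{\Sigma}_{\rm i}$ is real diagonal and $\bm{\Sigma}_{\rm s}$ is positive. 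I expect the main obstacle to be precisely the diagonal-block step --- realizing that a generic compact SVD of $\bm{A}_{\rm s}$ does not work, and that the simultaneous block-unitary rotation is both admissible and exactly what makes $\bm{\Sigma}_{\rm i}$ diagonal --- together with the bookkeeping needed to repackage the block solutions into the Hadamard/$\widetilde{\bm{\Delta}}$, $\widetilde{\bm{\Psi}}$, $\widetilde{\bm{\Omega}}$ form in the statement.
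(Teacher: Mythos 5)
Your proposal is correct and follows essentially the same route as the paper: the splitting of $\bm{U}_{\rm i},\bm{V}_{\rm i}$ into range and orthogonal-complement parts with the consistency condition \eqref{333333} is the content of \Cref{le2.2}, the blockwise solution of the core equation $\bm{R}=\bm{K}_U\bm{\Sigma}_{\rm s}+\bm{\Sigma}_{\rm i}+\bm{\Sigma}_{\rm s}\bm{K}_V^*$ using the distinct $\tilde\sigma_a$ is \Cref{lemma3.1}, and the block-diagonal unitary rotation diagonalizing ${\rm{sym}}(\bm{R}_{aa})$ is exactly how the paper constructs the particular pair $(\bm{U}_{\rm s},\bm{V}_{\rm s})$. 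The only difference is organizational (your single ``four corners'' projection versus the paper's two lemmata), so no gap to report.
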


It is worth mentioning that for a given dual complex matrix $\bm{A}$, the relationship between $\bm{A}_{\rm{i}}$ and singular vectors of $\bm{A}_{\rm{s}}$ determines whether its CDSVD exists. 
Additionally, if the dual complex matrix $\bm{A}$ satisfies \cref{333333}, then $\frac{\bm{R}_{ii}+\bm{R}_{ii}^*}{2}$ is Hermitian for any pair $(\bm{U}_{\rm{s}},\bm{V}_{\rm{s}})$. 
However, the particular pair $(\bm{U}_{\rm{s}},\bm{V}_{\rm{s}})$ ensures that the corresponding $\frac{\bm{R}_{ii}+\bm{R}_{ii}^*}{2}$ is diagonal and real in our CDSVD theorem.

In order to prove the CDSVD of dual complex matrices, we first introduce the following two lemmata.

\begin{lemma}\label{lemma3.1}
Let $\bm{\Sigma}_{\rm{s}}={\rm{diag}}(\Tilde{\sigma}_1\bm{I}_{r_1},\Tilde{\sigma}_2\bm{I}_{r_2},\cdots,\Tilde{\sigma}_p\bm{I}_{r_p})\in\mathbb{R}^{r\times r}$ with $\Tilde{\sigma}_1>\Tilde{\sigma}_2>\cdots>\Tilde{\sigma}_p>0$ and $\bm{R}\in\mathbb{C}^{r\times r}$. 
Then the matrix conditions about $\bm{P}\in\mathbb{C}^{r\times r}$ and $\bm{Q}\in\mathbb{C}^{r\times r}$ satisfying
\begin{subequations}
    \begin{align}
&\bm{P}+\bm{P}^*=\bm{O}_r\;,\label{5555555a}\\
&\bm{Q}+\bm{Q}^*=\bm{O}_r\;,\\
&\bm{\Sigma}_{R} = \bm{R}-\bm{P}\bm{\Sigma}_{\rm{s}}-\bm{\Sigma}_{\rm{s}} \bm{Q}^*\in\mathcal{B}_{rp}^{\mathcal{H}}\;,\label{666c}
    \end{align}
\end{subequations}
have solutions
\begin{subequations}
    \begin{align}
&\bm{P} = {\rm{sym}}(\bm{R}\bm{\Sigma}_{\rm{s}})\odot\widetilde{\bm{\Delta}}+\bm{\widetilde{\Omega}}+\widetilde{\bm{\Psi}}\;,\label{7a} \\
&\bm{Q} = {\rm{sym}}(\bm{\Sigma}_{\rm{s}} \bm{R})\odot\widetilde{\bm{\Delta}}+\bm{\widetilde{\Omega}}\;,\label{7b}
    \end{align}
\end{subequations}
where $\bm{\widetilde{\Omega}}\in\mathcal{B}_{rp}^{\mathcal{S}}$.
Moreover, the expressions of $\bm{\widetilde{\Delta}}$ and $\bm{\widetilde{\Psi}}$
are shown in \cref{88}.
\end{lemma}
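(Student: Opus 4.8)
The plan is to partition everything by the block structure $r = r_1 + \cdots + r_p$ that $\bm{\Sigma}_{\rm{s}}$ carries, turn the three matrix conditions into scalar-weighted block equations, and solve those blockwise. Write $\bm{P} = (\bm{P}_{ij})$, $\bm{Q} = (\bm{Q}_{ij})$, $\bm{R} = (\bm{R}_{ij})$ with $\bm{P}_{ij},\bm{Q}_{ij},\bm{R}_{ij}\in\mathbb{C}^{r_i\times r_j}$. Since $\bm{\Sigma}_{\rm{s}}$ acts as multiplication by $\tilde\sigma_j$ on the $j$-th block column and by $\tilde\sigma_i$ on the $i$-th block row, the $(i,j)$-block of $\bm{R}-\bm{P}\bm{\Sigma}_{\rm{s}}-\bm{\Sigma}_{\rm{s}}\bm{Q}^*$ equals $\bm{R}_{ij}-\tilde\sigma_j\bm{P}_{ij}-\tilde\sigma_i\bm{Q}_{ji}^*$. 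Hence \cref{666c} is equivalent to: $\bm{R}_{ij}-\tilde\sigma_j\bm{P}_{ij}-\tilde\sigma_i\bm{Q}_{ji}^* = \bm{O}$ whenever $i\neq j$, together with $\bm{R}_{ii}-\tilde\sigma_i(\bm{P}_{ii}+\bm{Q}_{ii}^*)$ Hermitian for every $i$; while \cref{5555555a} and the analogous condition on $\bm{Q}$ say exactly $\bm{P}_{ij}^* = -\bm{P}_{ji}$ and $\bm{Q}_{ij}^* = -\bm{Q}_{ji}$ for all $i,j$ (in particular $\bm{P}_{ii},\bm{Q}_{ii}$ are skew-Hermitian).

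For the off-diagonal part I would fix a pair $i\neq j$ and couple the $(i,j)$-block equation with the conjugate transpose of the $(j,i)$-block equation; using $\bm{P}_{ji}^* = -\bm{P}_{ij}$ and $\bm{Q}_{ij}^* = -\bm{Q}_{ji}$, this yields a $2\times 2$ linear system in the unknown pair $(\bm{P}_{ij},\bm{Q}_{ji}^*)$ with coefficient matrix $\left[\begin{smallmatrix}\tilde\sigma_j & \tilde\sigma_i\\ -\tilde\sigma_i & -\tilde\sigma_j\end{smallmatrix}\right]$ and right-hand side $(\bm{R}_{ij},\bm{R}_{ji}^*)$. Its determinant is $\tilde\sigma_i^2-\tilde\sigma_j^2 \neq 0$, precisely because the $\tilde\sigma_i$ are pairwise distinct, so Cramer's rule gives $\bm{P}_{ij} = (\tilde\sigma_j\bm{R}_{ij}+\tilde\sigma_i\bm{R}_{ji}^*)/(\tilde\sigma_j^2-\tilde\sigma_i^2)$ and a mirror formula for $\bm{Q}_{ij}$. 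One then checks that the blocks so obtained automatically satisfy $\bm{P}_{ji} = -\bm{P}_{ij}^*$ (and likewise for $\bm{Q}$), so the skew-Hermitian constraints on the off-diagonal blocks are consistent rather than over-determining — no hidden condition on $\bm{R}$ appears. Recognizing $\tilde\sigma_j\bm{R}_{ij}+\tilde\sigma_i\bm{R}_{ji}^*$ as the $(i,j)$-block of ${\rm{sym}}(\bm{R}\bm{\Sigma}_{\rm{s}})$ and $1/(\tilde\sigma_j^2-\tilde\sigma_i^2)$ as the $(i,j)$-entry of $\widetilde{\bm{\Delta}}$ from \cref{88} turns these formulas into the Hadamard-product terms of \cref{7a}, and symmetrically (using ${\rm{sym}}(\bm{\Sigma}_{\rm{s}}\bm{R})$) of \cref{7b}.

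For the diagonal blocks, set $\bm{S}_i := \bm{P}_{ii}+\bm{Q}_{ii}^*$; since $\bm{P}_{ii},\bm{Q}_{ii}$ are skew-Hermitian, $\bm{S}_i$ is skew-Hermitian, and then ``$\bm{R}_{ii}-\tilde\sigma_i\bm{S}_i$ Hermitian'' is equivalent to $\bm{S}_i = (\bm{R}_{ii}-\bm{R}_{ii}^*)/(2\tilde\sigma_i)$, which is itself skew-Hermitian. This pins down only the sum $\bm{P}_{ii}+\bm{Q}_{ii}^*$, leaving one skew-Hermitian block of freedom; taking $\bm{Q}_{ii} = \widetilde{\bm{\Omega}}_{ii}$ an arbitrary element of $\mathcal{B}_{rp}^{\mathcal{S}}$ and $\bm{P}_{ii} = \widetilde{\bm{\Omega}}_{ii}+(\bm{R}_{ii}-\bm{R}_{ii}^*)/(2\tilde\sigma_i)$ reproduces exactly the $\widetilde{\bm{\Omega}}+\widetilde{\bm{\Psi}}$ and $\widetilde{\bm{\Omega}}$ contributions in \cref{7a}--\cref{7b}. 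Assembling the blocks gives the stated $\bm{P},\bm{Q}$, after which a short direct substitution confirms that all three conditions hold.

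I expect the only real difficulty to be bookkeeping rather than any conceptual leap: one must carefully track the block indices, the conjugate-transpose relabelings, and the pairing of block $(i,j)$ with block $(j,i)$ forced by skew-Hermiticity, and verify that the $2\times 2$ systems are compatible with those constraints. A second point worth stating cleanly is why the diagonal blocks leave precisely the degrees of freedom encoded by $\widetilde{\bm{\Omega}}$, since this nonuniqueness is exactly what \Cref{the3.1summ} later exploits when selecting a particular pair $(\bm{U}_{\rm{s}},\bm{V}_{\rm{s}})$.
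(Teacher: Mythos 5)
Your proposal is correct and follows essentially the same route as the paper: block-partitioning according to $\bm{\Sigma}_{\rm{s}}$, solving the paired off-diagonal block equations (your $2\times2$ Cramer's-rule system is the same computation as the paper's $\Tilde{\sigma}_j\,\cref{88c}+\Tilde{\sigma}_i\,\cref{88d}$ combination), and resolving the diagonal blocks up to the skew-Hermitian freedom $\widetilde{\bm{\Omega}}$ via $\bm{P}_{ii}-\bm{Q}_{ii}=\frac{\bm{R}_{ii}-\bm{R}_{ii}^*}{2\Tilde{\sigma}_i}$. Your added remarks on consistency of the skew-Hermitian constraints and on the final substitution check are fine and only make explicit what the paper leaves implicit.
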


\begin{proof}
Conjugate the conditional equation \cref{666c}, we have $\bm{\Sigma}_{R}^* = \bm{R}^*-\bm{\Sigma}_{\rm{s}} \bm{P}^*-\bm{Q}\bm{\Sigma}_{\rm{s}}$. Let $\bm{R}_{ij}, \bm{P}_{ij}, \bm{Q}_{ij}$ represent the $(i,j)$-block matrices of $\bm{R}, \bm{P}, \bm{Q}$ respectively, which are partitioned as in $\bm{\Sigma}_{\rm{s}}$. Thus, for any $i\neq j$, we can deduce the following expressions from the original matrix conditions
\begin{subequations}
    \begin{align}
&\bm{P}_{ij}+\bm{P}_{ji}^*=\bm{O}_{r_i\times r_j}\;,\label{88a}\\
&\bm{Q}_{ij}+\bm{Q}_{ji}^*=\bm{O}_{r_i\times r_j}\label{88b}\;,\\
&\bm{R}_{ij} - \Tilde{\sigma}_j\bm{P}_{ij} - \Tilde{\sigma}_i\bm{Q}_{ji}^* = \bm{O}_{r_i\times r_j}\label{88c}\;,\\
&\bm{R}_{ji}^* - \Tilde{\sigma}_i\bm{P}_{ji}^* - \Tilde{\sigma}_j\bm{Q}_{ij} = \bm{O}_{r_i\times r_j}\label{88d}\;.
    \end{align}
\end{subequations}

From $\Tilde{\sigma}_j$\cref{88c}+$\Tilde{\sigma}_i$\cref{88d}, we have $\bm{P}_{ij}=\frac{\Tilde{\sigma}_j\bm{R}_{ij}+\Tilde{\sigma}_i\bm{R}_{ji}^*}{\Tilde{\sigma}_j^2-\Tilde{\sigma}_i^2}$; from $\Tilde{\sigma}_i$\cref{88c}+$\Tilde{\sigma}_j$\cref{88d}, we have $\bm{Q}_{ij} = \frac{\Tilde{\sigma}_i\bm{R}_{ij}+\Tilde{\sigma}_j\bm{R}_{ji}^*}{\Tilde{\sigma}_j^2-\Tilde{\sigma}_i^2}$. Thereupon, the expressions for the off-diagonal blocks of $\bm{P}$ and $\bm{Q}$ are obtained. In addition, for any $i$, we have 
\begin{align*}
&\bm{R}_{ii} - \Tilde{\sigma}_i\bm{P}_{ii} - \Tilde{\sigma}_i\bm{Q}_{ii}^* = \bm{R}_{ii}^* - \Tilde{\sigma}_i\bm{P}_{ii}^* -\Tilde{\sigma}_i\bm{Q}_{ii}\;,\\
&\bm{P}_{ii}+\bm{P}_{ii}^*=\bm{O}_{r_i}\;,\\
&\bm{Q}_{ii}+\bm{Q}_{ii}^*=\bm{O}_{r_i}\;,
\end{align*}
which implies that $\bm{P}_{ii}-\bm{Q}_{ii}=\frac{\bm{R}_{ii}-\bm{R}_{ii}^*}{2\Tilde{\sigma}_i}$ and the diagonal blocks of $\bm{P}$ and $\bm{Q}$ are all skew-Hermitian matrices. 
These properties lead to the expressions of $\bm{P}$ and $\bm{Q}$ in \cref{7a,7b}.
This completes the proof.
\end{proof}

Note that when $\bm{P}$ and $\bm{Q}$ have expressions in \cref{7a,7b}, \cref{666c} becomes $\bm{\Sigma}_{R}={\rm{diag}}\big(\frac{\bm{R}_{11}+\bm{R}_{11}^*}{2},\cdots,\frac{\bm{R}_{pp}+\bm{R}_{pp}^*}{2}\big)$ and $\bm{\Sigma}_R$ is exactly block diagonal and Hermitian.

\begin{lemma}\label{le2.2}
Let $\bm{U}_{\rm{s}}\in\mathbb{C}^{m\times r}$, $\bm{V}_{\rm{s}}\in\mathbb{C}^{n\times r}$ both have unitary columns, $\bm{\Sigma}_{\rm{s}}\in\mathbb{R}^{r\times r}$ be diagonal positive, $\bm{A}_{\rm{i}}\in\mathbb{C}^{m\times n}$, $\bm{P}\in\mathbb{C}^{r\times r}$ and $\bm{Q}\in\mathbb{C}^{r\times r}$. Then the matrix equations
\begin{subequations}
    \begin{align}
        &\bm{A}_{\rm{i}} = \bm{U}_{\rm{i}}\bm{\Sigma}_{\rm{s}} \bm{V}_{\rm{s}}^*+\bm{U}_{\rm{s}}\bm{\Sigma}_{\rm{i}} \bm{V}_{\rm{s}}^*+\bm{U}_{\rm{s}}\bm{\Sigma}_{\rm{s}} \bm{V}_{\rm{i}}^*\;,\label{99a}\\
&\bm{P}=\bm{U}_{\rm{s}}^*\bm{U}_{\rm{i}}\;,\label{99b}\\
&\bm{Q}=\bm{V}_{\rm{s}}^*\bm{V}_{\rm{i}}\;,\label{99c}
    \end{align}
\end{subequations}
about $\bm{U}_{\rm{i}}\in\mathbb{C}^{m\times r}$, $\bm{V}_{\rm{i}}\in\mathbb{C}^{n\times r}$, $\bm{\Sigma}_{\rm{i}}\in\mathbb{C}^{r\times r}$ are consistent if and only if
\begin{align}
    (\bm{I}_m-\bm{U}_{\rm{s}}\bm{U}_{\rm{s}}^*)\bm{A}_{\rm{i}}(\bm{I}_n-\bm{V}_{\rm{s}}\bm{V}_{\rm{s}}^*)=\bm{O}_{m\times n}\;.\label{7}
\end{align}
Moreover, the solutions to the equations are
\begin{subequations}
    \begin{align}
&\bm{U}_{\rm{i}}=\bm{U}_{\rm{s}}\bm{P}+(\bm{I}_m-\bm{U}_{\rm{s}}\bm{U}_{\rm{s}}^*)\bm{A}_{\rm{i}}\bm{V}_{\rm{s}}\bm{\Sigma}_{\rm{s}}^{-1}\;,\label{11a}\\
&\bm{V}_{\rm{i}}=\bm{V}_{\rm{s}}\bm{Q}+(\bm{I}_n-\bm{V}_{\rm{s}}\bm{V}_{\rm{s}}^*)\bm{A}_{\rm{i}}^*\bm{U}_{\rm{s}}\bm{\Sigma}_{\rm{s}}^{-1}\;,\label{11b}\\
&\bm{\Sigma}_{\rm{i}} = \bm{U}_{\rm{s}}^*\bm{A}_{\rm{i}}\bm{V}_{\rm{s}} -  \bm{P}\bm{\Sigma}_{\rm{s}}-\bm{\Sigma}_{\rm{s}}\bm{Q}^*\label{11c}\;.
    \end{align}
\end{subequations}
\end{lemma}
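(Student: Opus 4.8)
The plan is to reduce the system \cref{99a,99b,99c} to four decoupled block equations by splitting each ambient space along the ranges of $\bm{U}_{\rm{s}}$ and $\bm{V}_{\rm{s}}$. Since $\bm{U}_{\rm{s}}$ and $\bm{V}_{\rm{s}}$ have unitary columns, $\bm{U}_{\rm{s}}^*\bm{U}_{\rm{s}}=\bm{I}_r$ and $\bm{V}_{\rm{s}}^*\bm{V}_{\rm{s}}=\bm{I}_r$, so $\mathcal{P}_{\bm{U}_{\rm{s}}}$ and $\mathcal{P}_{\bm{U}_{\rm{s}}^\perp}$ are complementary orthogonal projectors (likewise for $\bm{V}_{\rm{s}}$), and $\bm{\Sigma}_{\rm{s}}$ is invertible because it is diagonal positive. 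First I would use \cref{99b,99c} to write $\bm{U}_{\rm{i}}=\bm{U}_{\rm{s}}\bm{P}+\bm{U}_{\rm{i}}^\perp$ and $\bm{V}_{\rm{i}}=\bm{V}_{\rm{s}}\bm{Q}+\bm{V}_{\rm{i}}^\perp$, where $\bm{U}_{\rm{i}}^\perp:=\mathcal{P}_{\bm{U}_{\rm{s}}^\perp}\bm{U}_{\rm{i}}$ and $\bm{V}_{\rm{i}}^\perp:=\mathcal{P}_{\bm{V}_{\rm{s}}^\perp}\bm{V}_{\rm{i}}$; note $\bm{U}_{\rm{s}}^*\bm{U}_{\rm{i}}^\perp=\bm{O}$, $\mathcal{P}_{\bm{U}_{\rm{s}}^\perp}\bm{U}_{\rm{s}}=\bm{O}$, and similarly for $\bm{V}$. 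After this the only unknowns left are $\bm{U}_{\rm{i}}^\perp$, $\bm{V}_{\rm{i}}^\perp$, and $\bm{\Sigma}_{\rm{i}}$.

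Next I would substitute these into \cref{99a}; since every $m\times n$ matrix equals the sum of its four blocks $\mathcal{P}_{\bm{U}_{\rm{s}}}(\cdot)\mathcal{P}_{\bm{V}_{\rm{s}}}$, $\mathcal{P}_{\bm{U}_{\rm{s}}}(\cdot)\mathcal{P}_{\bm{V}_{\rm{s}}^\perp}$, $\mathcal{P}_{\bm{U}_{\rm{s}}^\perp}(\cdot)\mathcal{P}_{\bm{V}_{\rm{s}}}$, $\mathcal{P}_{\bm{U}_{\rm{s}}^\perp}(\cdot)\mathcal{P}_{\bm{V}_{\rm{s}}^\perp}$, I would compare \cref{99a} blockwise by pre-multiplying with $\bm{U}_{\rm{s}}^*$ or $\mathcal{P}_{\bm{U}_{\rm{s}}^\perp}$ and post-multiplying with $\bm{V}_{\rm{s}}$ or $\mathcal{P}_{\bm{V}_{\rm{s}}^\perp}$. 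The $(\bm{U}_{\rm{s}},\bm{V}_{\rm{s}})$-block yields $\bm{U}_{\rm{s}}^*\bm{A}_{\rm{i}}\bm{V}_{\rm{s}}=\bm{P}\bm{\Sigma}_{\rm{s}}+\bm{\Sigma}_{\rm{i}}+\bm{\Sigma}_{\rm{s}}\bm{Q}^*$, i.e. \cref{11c}; the $(\bm{U}_{\rm{s}}^\perp,\bm{V}_{\rm{s}})$-block yields $\mathcal{P}_{\bm{U}_{\rm{s}}^\perp}\bm{A}_{\rm{i}}\bm{V}_{\rm{s}}=\bm{U}_{\rm{i}}^\perp\bm{\Sigma}_{\rm{s}}$, hence $\bm{U}_{\rm{i}}^\perp=\mathcal{P}_{\bm{U}_{\rm{s}}^\perp}\bm{A}_{\rm{i}}\bm{V}_{\rm{s}}\bm{\Sigma}_{\rm{s}}^{-1}$; the $(\bm{U}_{\rm{s}},\bm{V}_{\rm{s}}^\perp)$-block yields $\bm{U}_{\rm{s}}^*\bm{A}_{\rm{i}}\mathcal{P}_{\bm{V}_{\rm{s}}^\perp}=\bm{\Sigma}_{\rm{s}}(\bm{V}_{\rm{i}}^\perp)^*$, whose conjugate transpose (using $\bm{\Sigma}_{\rm{s}}^*=\bm{\Sigma}_{\rm{s}}$ and $\mathcal{P}_{\bm{V}_{\rm{s}}^\perp}^*=\mathcal{P}_{\bm{V}_{\rm{s}}^\perp}$) gives $\bm{V}_{\rm{i}}^\perp=\mathcal{P}_{\bm{V}_{\rm{s}}^\perp}\bm{A}_{\rm{i}}^*\bm{U}_{\rm{s}}\bm{\Sigma}_{\rm{s}}^{-1}$; and the $(\bm{U}_{\rm{s}}^\perp,\bm{V}_{\rm{s}}^\perp)$-block forces $\mathcal{P}_{\bm{U}_{\rm{s}}^\perp}\bm{A}_{\rm{i}}\mathcal{P}_{\bm{V}_{\rm{s}}^\perp}=\bm{O}$, which is exactly \cref{7}. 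This establishes necessity; moreover, idempotency of the projectors guarantees the candidate $\bm{U}_{\rm{i}}^\perp,\bm{V}_{\rm{i}}^\perp$ automatically lie in the correct orthogonal complements, so recombining them with $\bm{U}_{\rm{s}}\bm{P}$ and $\bm{V}_{\rm{s}}\bm{Q}$ produces \cref{11a,11b,11c}, and the blockwise determination shows this is the unique solution once \cref{7} holds.

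For the converse I would verify directly that \cref{11a,11b,11c} solve the system when \cref{7} holds: \cref{99b,99c} are immediate from $\bm{U}_{\rm{s}}^*\mathcal{P}_{\bm{U}_{\rm{s}}^\perp}=\bm{O}$, $\bm{U}_{\rm{s}}^*\bm{U}_{\rm{s}}=\bm{I}_r$ (and likewise for $\bm{V}$); substituting into the right-hand side of \cref{99a} and expanding, the $\bm{P}\bm{\Sigma}_{\rm{s}}$- and $\bm{\Sigma}_{\rm{s}}\bm{Q}^*$-terms cancel and the remainder collapses to $\bm{A}_{\rm{i}}\bm{V}_{\rm{s}}\bm{V}_{\rm{s}}^*+\bm{U}_{\rm{s}}\bm{U}_{\rm{s}}^*\bm{A}_{\rm{i}}-\bm{U}_{\rm{s}}\bm{U}_{\rm{s}}^*\bm{A}_{\rm{i}}\bm{V}_{\rm{s}}\bm{V}_{\rm{s}}^*=\bm{A}_{\rm{i}}-\mathcal{P}_{\bm{U}_{\rm{s}}^\perp}\bm{A}_{\rm{i}}\mathcal{P}_{\bm{V}_{\rm{s}}^\perp}=\bm{A}_{\rm{i}}$, the last equality being \cref{7}. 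I do not expect a genuine obstacle; the only points that need care are the adjoint bookkeeping (that $\bm{\Sigma}_{\rm{s}}$ and $\bm{\Sigma}_{\rm{s}}^{-1}$ are real and self-adjoint and that $\mathcal{P}_{\bm{U}_{\rm{s}}^\perp},\mathcal{P}_{\bm{V}_{\rm{s}}^\perp}$ are self-adjoint) and the observation that the four projected identities are jointly equivalent to \cref{99a}, so that matching them one by one is simultaneously necessary and sufficient.
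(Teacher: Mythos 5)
Your proposal is correct and follows essentially the same route as the paper: split $\bm{U}_{\rm{i}}$, $\bm{V}_{\rm{i}}$ along the ranges of $\bm{U}_{\rm{s}}$, $\bm{V}_{\rm{s}}$ and their orthogonal complements, read off $\bm{\Sigma}_{\rm{i}}$, $\bm{U}_{\rm{i}}^\perp$, $\bm{V}_{\rm{i}}^\perp$ and condition \cref{7} from the projected blocks of \cref{99a}, and then verify sufficiency by direct substitution. The only cosmetic difference is that you work with the projectors $\mathcal{P}_{\bm{U}_{\rm{s}}^\perp}$, $\mathcal{P}_{\bm{V}_{\rm{s}}^\perp}$ directly, whereas the paper introduces explicit unitary completions $\widetilde{\bm{U}}_{\rm{s}}$, $\widetilde{\bm{V}}_{\rm{s}}$ via \Cref{pro1.2}; the computations are identical.
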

\begin{proof}
Suppose that the matrix equations are satisfied. Then we obviously have $ (\bm{I}_m-\bm{U}_{\rm{s}}\bm{U}_{\rm{s}}^*)\bm{A}_{\rm{i}}(\bm{I}_n-\bm{V}_{\rm{s}}\bm{V}_{\rm{s}}^*)=\bm{O}$. When $\bm{A}_{\rm{i}}$ is pre-multiplied by $\bm{U}_{\rm{s}}^*$ and post-multiplied by $\bm{V}_{\rm{s}}$, we get $\bm{\Sigma}_{\rm{i}} = \bm{U}_{\rm{s}}^*\bm{A}_{\rm{i}}\bm{V}_{\rm{s}} -  \bm{U}_{\rm{s}}^*\bm{U}_{\rm{i}}\bm{\Sigma}_{\rm{s}}-\bm{\Sigma}_{\rm{s}}\bm{V}_{\rm{i}}^*\bm{V}_{\rm{s}}=\bm{U}_{\rm{s}}^*\bm{A}_{\rm{i}}\bm{V}_{\rm{s}} -  \bm{P}\bm{\Sigma}_{\rm{s}}-\bm{\Sigma}_{\rm{s}}\bm{Q}^*$ by applying \cref{99b,99c}. Based on \Cref{pro1.2}, there is a $\widetilde{\bm{U}}_{\rm{s}}\in\mathbb{C}^{m\times (m-r)}$ such that $\bigl[\begin{matrix}
    \bm{U}_{\rm{s}} & \widetilde{\bm{U}}_{\rm{s}}
\end{matrix}\bigr]\in\mathbb{C}^{m\times m}$ is unitary. Assume that $\bm{U}_{\rm{i}}=\bm{U}_{\rm{s}}\bm{P}+\widetilde{\bm{U}}_{\rm{s}}\widetilde{\bm{P}}$. Then premultiplication of $\bm{A}_{\rm{i}}$ by $\widetilde{\bm{U}}_{\rm{s}}^*$ results in $\widetilde{\bm{U}}_{\rm{s}}^*\bm{A}_{\rm{i}} = \widetilde{\bm{P}}\bm{\Sigma}_{\rm{s}} \bm{V}_{\rm{s}}^*$, which implies that $\widetilde{\bm{U}}_{\rm{s}}\widetilde{\bm{P}} = \widetilde{\bm{U}}_{\rm{s}} \widetilde{\bm{U}}_{\rm{s}}^*\bm{A}_{\rm{i}}\bm{V}_{\rm{s}}\bm{\Sigma}_{\rm{s}}^{-1}=(\bm{I}_m-\bm{U}_{\rm{s}}\bm{U}_{\rm{s}}^*)\bm{A}_{\rm{i}}\bm{V}_{\rm{s}}\bm{\Sigma}_{\rm{s}}^{-1}$. Hence, 
\begin{align}
    \bm{U}_{\rm{i}}=\bm{U}_{\rm{s}}\bm{P}+(\bm{I}_m-\bm{U}_{\rm{s}}\bm{U}_{\rm{s}}^*)\bm{A}_{\rm{i}}\bm{V}_{\rm{s}}\bm{\Sigma}_{\rm{s}}^{-1}\nonumber\;.
\end{align}

Similarly, there is a $\widetilde{\bm{V}}_{\rm{s}}\in\mathbb{C}^{n\times (n-r)}$ such that $\bigl[\begin{matrix}
    \bm{V}_{\rm{s}} &  \widetilde{\bm{V}}_{\rm{s}}
\end{matrix}\bigr]\in\mathbb{C}^{n\times n}$ is unitary. Assume that $\bm{V}_{\rm{i}}=\bm{V}_{\rm{s}}\bm{Q}+\widetilde{\bm{V}}_{\rm{s}}\widetilde{\bm{Q}}$. Premultiply $\bm{A}_{\rm{i}}^*$ by $\widetilde{\bm{V}}_{\rm{s}}^*$, we obtain $\widetilde{\bm{V}}_{\rm{s}}^*\bm{A}_{\rm{i}}^*=\widetilde{\bm{Q}}\bm{\Sigma}_{\rm{s}}\bm{U}_{\rm{s}}^*$, which means that $\widetilde{\bm{V}}_{\rm{s}}\widetilde{\bm{Q}} = \widetilde{\bm{V}}_{\rm{s}} \widetilde{\bm{V}}_{\rm{s}}^*\bm{A}_{\rm{i}}^*\bm{U}_{\rm{s}}\bm{\Sigma}_{\rm{s}}^{-1}=(\bm{I}_n-\bm{V}_{\rm{s}}\bm{V}_{\rm{s}}^*)\bm{A}_{\rm{i}}^*\bm{U}_{\rm{s}}\bm{\Sigma}_{\rm{s}}^{-1}$. Therefore,
\begin{align}
    \bm{V}_{\rm{i}}=\bm{V}_{\rm{s}}\bm{Q}+(\bm{I}_n-\bm{V}_{\rm{s}}\bm{V}_{\rm{s}}^*)\bm{A}_{\rm{i}}^*\bm{U}_{\rm{s}}\bm{\Sigma}_{\rm{s}}^{-1}\nonumber\;.
\end{align}

On the contrary, suppose that \cref{7} is valid and the solutions, \cref{11a,11b,11c}, are satisfied. Then we have 
\begin{align}
&\bm{U}_{\rm{i}}\bm{\Sigma}_{\rm{s}} \bm{V}_{\rm{s}}^*+\bm{U}_{\rm{s}}\bm{\Sigma}_{\rm{i}} \bm{V}_{\rm{s}}^*+\bm{U}_{\rm{s}}\bm{\Sigma}_{\rm{s}} \bm{V}_{\rm{i}}^*\nonumber\\
    =&\bm{U}_{\rm{s}}\bm{P}\bm{\Sigma}_{\rm{s}} \bm{V}_{\rm{s}}^* + \mathcal{P}_{(\bm{U}_{\rm{s}})^{\perp}}\bm{A}_{\rm{i}}\bm{V}_{\rm{s}}\bm{V}_{\rm{s}}^* +\bm{U}_{\rm{s}}\bm{\Sigma}_{\rm{i}} \bm{V}_{\rm{s}}^*+\bm{U}_{\rm{s}}\bm{\Sigma}_{\rm{s}}\bm{Q}^*\bm{V}_{\rm{s}}^* + \bm{U}_{\rm{s}}\bm{U}_{\rm{s}}^*\bm{A}_{\rm{i}}\mathcal{P}_{(\bm{V}_{\rm{s}})^{\perp}}\nonumber\\
    =& \bm{A}_{\rm{i}}-(\bm{I}_m-\bm{U}_{\rm{s}}\bm{U}_{\rm{s}}^*)\bm{A}_{\rm{i}}(\bm{I}_n-\bm{V}_{\rm{s}}\bm{V}_{\rm{s}}^*)+\bm{U}_{\rm{s}}\left(\bm{P}\bm{\Sigma}_{\rm{s}}+\bm{\Sigma}_{\rm{i}}+\bm{\Sigma}_{\rm{s}}\bm{Q}^*-\bm{U}_{\rm{s}}^*\bm{A}_{\rm{i}}\bm{V}_{\rm{s}}\right)\bm{V}_{\rm{s}}^*\nonumber\\
    =& \bm{A}_{\rm{i}}\nonumber\;.
\end{align}

Moreover, $\bm{U}_{\rm{s}}^*\bm{U}_{\rm{i}} = \bm{P}$ and $\bm{V}_{\rm{s}}^*\bm{V}_{\rm{i}} = \bm{Q}$. Accordingly, the matrix equations hold. Here, we complete the proof.
\end{proof}

%we conclude that if the standard part of some decomposition of the dual complex matrix $\bm{A}$ is equal to the compact SVD of $\bm{A}_{\rm{s}}$, then the best achievable outcome for the infinitesimal part of the middle term of this decomposition is block diagonal and Hermitian, under the condition that the left and right terms of this decomposition are both unitary dual matrices. This claim is supported by the following theorem.

%we can conclude that a dual complex matrix $\bm{A}$ can be converted to a low dimensional dual complex matrix $\bm{\widehat{\Sigma}}$ with diagonal positive standard part $\bm{\Sigma}_{\rm{s}}$ and block diagonal Hermitian infinitesimal part $\bm{\Sigma}_R$ by any unitary transformation.

From the two aforementioned lemmata, 
we induce the proof of \Cref{the3.1summ}.

\begin{proof}
Suppose that $\bm{A}$ has a CDSVD with the following representative form
\begin{align}\label{4}
     \left[\begin{array}{cc}
      \bm{A}_{\rm{s}}   &  \\
      \bm{A}_{\rm{i}}  & \bm{A}_{\rm{s}}
    \end{array}\right] = \left[\begin{array}{cc}
      \bm{U}_{\rm{s}}   &  \\
      \bm{U}_{\rm{i}}  & \bm{U}_{\rm{s}}
    \end{array}\right]\left[\begin{array}{cc}
      \bm{\Sigma}_{\rm{s}}   &  \\
      \bm{\Sigma}_{\rm{i}}  & \bm{\Sigma}_{\rm{s}}
    \end{array}\right]\left[\begin{array}{cc}
      \bm{V}^*_{\rm{s}}   &  \\
      \bm{V}^*_{\rm{i}}  & \bm{V}^*_{\rm{s}}
    \end{array}\right]\;.
\end{align}
Then we have $\bm{A}_{\rm{s}}=\bm{U}_{\rm{s}}\bm{\Sigma}_{\rm{s}}\bm{V}_{\rm{s}}^*$ and $\bm{A}_{\rm{i}} = \bm{U}_{\rm{i}}\bm{\Sigma}_{\rm{s}} \bm{V}_{\rm{s}}^*+\bm{U}_{\rm{s}}\bm{\Sigma}_{\rm{i}} \bm{V}_{\rm{s}}^*+\bm{U}_{\rm{s}}\bm{\Sigma}_{\rm{s}} \bm{V}_{\rm{i}}^*$, which implies that \cref{333333} is satisfied. 

Assume that $\bm{A}_{\rm{s}}=\bm{M}_{\rm{s}}\bm{\Sigma}_{\rm{s}}\bm{N}_{\rm{s}}^*$ is a compact SVD of $\bm{A}_{\rm{s}}$. Let $\widehat{\bm{R}}:=\bm{M}_{\rm{s}}^*\bm{A}_{\rm{i}}\bm{N}_{\rm{s}}$. Then $\bm{\Sigma}_{\widehat{R}} = \widehat{\bm{R}}-\bm{\widehat{P}\Sigma}_{\rm{s}}-\bm{\Sigma}_{\rm{s}}\widehat{\bm{Q}}^*$ is block diagonal and Hermitian when $\widehat{\bm{P}}$ and $\widehat{\bm{Q}}$ is taken as in \cref{7a,7b} in accordance with \Cref{lemma3.1}. 
Specifically, $\bm{\Sigma}_{\widehat{R}}={\rm{diag}}\big(\frac{\widehat{\bm{R}}_{11}+\widehat{\bm{R}}_{11}^*}{2},\cdots,\frac{\widehat{\bm{R}}_{pp}+\widehat{\bm{R}}_{pp}^*}{2}\big)$. 
Thus, for any $t=1,2,\cdots,p$, there is a unitary matrix $\bm{X}_{tt}\in\mathbb{C}^{r_t\times r_t}$ and a diagonal real matrix $\bm{\Lambda}_{tt}\in\mathbb{R}^{r_t\times r_t}$ such that $\frac{\bm{\widehat{R}}_{tt}+\bm{\widehat{R}}_{tt}^*}{2} = \bm{X}_{tt}\bm{\Lambda}_{tt}\bm{X}_{tt}^*$, where $\bm{\Lambda}_{tt} = {\rm{diag}}(\sigma_{t,1},\sigma_{t,2},\cdots,\sigma_{t,r_t})$ satisfying $\sigma_{t,1}\geq\sigma_{t,2}\geq\cdots\geq\sigma_{t,r_t}$. Denote $\bm{X}:={\rm{diag}}(\bm{X}_{11},\cdots,\bm{X}_{pp})$. 

Set $\bm{U}_{\rm{s}}=\bm{M}_{\rm{s}}\bm{X}$, $\bm{V}_{\rm{s}}=\bm{N}_{\rm{s}}\bm{X}$ and $\bm{R}=\bm{U}_{\rm{s}}^*\bm{A}_{\rm{i}}\bm{V}_{\rm{s}}$. Next, we prove that this pair $(\bm{U}_{\rm{s}},\bm{V}_{\rm{s}})$ is the particular pair that can ensure the corresponding $\frac{\bm{R}_{tt}+\bm{R}_{tt}^*}{2}$ to be diagonal and real.
On the one hand, $\bm{U}_{\rm{s}}\bm{\Sigma}_{\rm{s}}\bm{V}_{\rm{s}}^* = \bm{M}_{\rm{s}}\bm{X}\bm{\Sigma}_{\rm{s}}\bm{X}^*\bm{N}_{\rm{s}}^* = \bm{M}_{\rm{s}}\bm{\Sigma}_{\rm{s}}\bm{N}_{\rm{s}}^* = \bm{A}_{\rm{s}}$, which means that $\bm{U}_{\rm{s}}\bm{\Sigma}_{\rm{s}}\bm{V}_{\rm{s}}^*$ is also a compact SVD of $\bm{A}_{\rm{s}}$.
On the other hand, note that $\bm{R} = \bm{U}_{\rm{s}}^*\bm{A}_{\rm{i}}\bm{V}_{\rm{s}} = \bm{X}^*\bm{M}_{\rm{s}}^*\bm{A}_{\rm{i}}\bm{N}_{\rm{s}}\bm{X} = \bm{X}^*\bm{\widehat{R}}\bm{X}$, then we have $\frac{\bm{R}_{tt}+\bm{R}_{tt}^*}{2} = \bm{X}_{tt}^*\frac{\bm{\widehat{R}}_{tt}+\bm{\widehat{R}}_{tt}^*}{2}\bm{X}_{tt}=\bm{\Lambda}_{tt}$. 

By using this particular pair $(\bm{U}_{\rm{s}},\bm{V}_{\rm{s}})$, we then aim to solve $(\bm{U}_{\rm{i}},\bm{V}_{\rm{i}},\bm{\Sigma}_{\rm{i}})$ such that \cref{4} holds.
Denoting $\bm{P}:= \bm{U}_{\rm{s}}^*\bm{U}_{\rm{i}}$ and $\bm{Q}:= \bm{V}_{\rm{s}}^*\bm{V}_{\rm{i}}$, we get $\bm{\Sigma}_{\rm{i}} = \bm{R}-\bm{P}\bm{\Sigma}_{\rm{s}}-\bm{\Sigma}_{\rm{s}} \bm{Q}^*$ when $\bm{A}_{\rm{i}}$ is pre-multiplied by $\bm{U}_{\rm{s}}^*$ and post-multiplied by $\bm{V}_{\rm{s}}$. 
Moreover, since $\bm{U}, \bm{V}$ both have unitary columns, $\bm{P}+\bm{P}^*=\bm{O}$ and  $\bm{Q}+\bm{Q}^*=\bm{O}$ hold by applying \Cref{pro1.1} $({\rm{v}})$. 
Hence, from \Cref{lemma3.1}, when $\bm{P}$ and $\bm{Q}$ are expressed as \cref{7a,7b}, we have $\bm{R}-\bm{P}\bm{\Sigma}_{\rm{s}}-\bm{\Sigma}_{\rm{s}} \bm{Q}^* = {\rm{diag}}\big(\frac{\bm{R}_{11}+\bm{R}_{11}^*}{2},\cdots,\frac{\bm{R}_{pp}+\bm{R}_{pp}^*}{2}\big) = {\rm{diag}}(\bm{\Lambda}_{11},\cdots,\bm{\Lambda}_{pp})$. That is, $\bm{\Sigma}_{\rm{i}} = {\rm{diag}}(\sigma_{1,1},\cdots,\sigma_{1,r_1},\cdots,\sigma_{p,1},\cdots,\sigma_{p,r_p})$ is diagonal and real.

It then yields the forms of $\bm{U}_{\rm{i}}$ and $\bm{V}_{\rm{i}}$ in \cref{11a,11b} based on \Cref{le2.2}. 
Since $\bm{\Sigma}_{\rm{s}}$, $\bm{\Sigma}_{\rm{i}}$ are diagonal real and $\bm{P}$, $\bm{Q}$ are skew-Hermitian, we obtain $\bm{\Sigma}_{\rm{i}}=\frac{1}{2}(\bm{\Sigma}_{\rm{i}}+\bm{\Sigma}_{\rm{i}}^*)=\frac{1}{2}{\rm{Diag}}\big(\bm{R}+\bm{R}^*+(\bm{\Sigma}_{\rm{s}}\bm{P}-\bm{P}\bm{\Sigma}_{\rm{s}})+(\bm{\Sigma}_{\rm{s}}\bm{Q}-\bm{Q}\bm{\Sigma}_{\rm{s}})\big)=\frac{1}{2}{\rm{Diag}}(\bm{R}+\bm{R}^*)$.

Conversely, suppose that \eqref{333333} is valid with the solutions \cref{19a,19b,19c}, then from \Cref{le2.2}, we know that the matrix equation \eqref{99a} is satisfied. This implies that the representative form \eqref{4} of the CDSVD of $\bm{A}$ is also satisfied.

Consequently, we complete the proof.
\end{proof}

In summary, if the dual complex matrix $\bm{A}$ has a CDSVD, then its $r$ singular values fulfill that $\Tilde{\sigma}_1+\sigma_{1,1}\epsilon\geq\cdots\geq\Tilde{\sigma}_1+\sigma_{1,r_1}\epsilon\geq\cdots\geq\Tilde{\sigma}_p+\sigma_{p,1}\epsilon\geq\cdots\geq\Tilde{\sigma}_p+\sigma_{p,r_p}\epsilon>0$.
\Cref{alg:CDSVD} shows the method of calculating the CDSVD of a given dual complex matrix $\bm{A}\in\mathbb{DC}^{m\times n} (m\geq n)$, where the free matrix $\widetilde{\bm{\Omega}}$ is taken to be the zero matrix.

\begin{algorithm}[ht]
\caption{The Compact Dual SVD (CDSVD) of $\bm{A}=\bm{U\Sigma V}^*\in\mathbb{DC}^{m\times n}$.}
\label{alg:CDSVD}
\begin{algorithmic}
\REQUIRE
{$\bm{A}=\bm{A}_{\rm{s}}+\bm{A}_{\rm{i}}\epsilon\in\mathbb{DC}^{m\times n}\; (m\geq n)$.}
\STATE{\textbf{Step 1}. Decompose $\bm{A}_{\rm{s}}$ by a compact SVD $\bm{A}_{\rm{s}} = \bm{U}_{\rm{s}}\bm{\Sigma}_{\rm{s}}\bm{V}_{\rm{s}}^*$, where $\bm{U}_{\rm{s}}\in\mathbb{C}^{m\times r}, \bm{V}_{\rm{s}}\in\mathbb{C}^{n\times r}$ both have unitary columns and $\bm{\Sigma}_{\rm{s}}={\rm{diag}}(\Tilde{\sigma}_1\bm{I}_{r_1},\Tilde{\sigma}_2\bm{I}_{r_2},\cdots,\Tilde{\sigma}_p\bm{I}_{r_p})\in\mathbb{R}^{r\times r}$ is diagonal positive satisfying $\Tilde{\sigma}_1>\Tilde{\sigma}_2>\cdots>\Tilde{\sigma}_p>0$.}
\WHILE{the condition  $(\bm{I}_m-\bm{U}_{\rm{s}}\bm{U}_{\rm{s}}^*)\bm{A}_{\rm{i}}(\bm{I}_n-\bm{V}_{\rm{s}}\bm{V}_{\rm{s}}^*)=\bm{O}_{m\times n}$ is satisfied}
\STATE{\textbf{Step 2}.
    Compute $\bm{R} = \bm{U}_{\rm{s}}^*\bm{A}_{\rm{i}}\bm{V}_{\rm{s}}=\left[\begin{array}{cccc}
        \bm{R}_{11} & \bm{R}_{12} & \cdots & \bm{R}_{1p} \\
        \bm{R}_{21} & \bm{R}_{22} & \cdots & \bm{R}_{2p} \\
        \vdots & \vdots & \ddots & \vdots \\
        \bm{R}_{p1} & \bm{R}_{p2} & \cdots & \bm{R}_{pp} \\
    \end{array}\right]$.}
\STATE{\textbf{Step 3}.
Compute the eigenvalue decomposition of $\frac{\bm{R}_{tt}+\bm{R}_{tt}^*}{2}$ for $t=1,2,\cdots,p$.\\
\quad $\frac{\bm{R}_{tt}+\bm{R}_{tt}^*}{2}= \bm{X}_{tt}\bm{\Lambda}_{tt}\bm{X}_{tt}^*$, where $\bm{X}_{tt}\in\mathbb{C}^{r_t\times r_t}$ is unitary and $\bm{\Lambda}_{tt}\; = \;{\rm{diag}}\left(\sigma_{t,1},\right.$\\
\quad $\left.\sigma_{t,2},\;\cdots,\;\sigma_{t,r_t}\right)\in\mathbb{R}^{r_t\times r_t}$ is diagonal real satisfying $\sigma_{t,1}\geq\sigma_{t,2}\geq\cdots\geq\sigma_{t,r_t}$.\\
\quad Set $\bm{X} = {\rm{diag}}(\bm{X}_{11},\bm{X}_{22},\cdots,\bm{X}_{pp})$.}
\STATE{\textbf{Step 4.}
Update $\bm{U}_{\rm{s}}\leftarrow \bm{U}_{\rm{s}}\bm{X}$, $\bm{V}_{\rm{s}}\leftarrow \bm{V}_{\rm{s}}\bm{X}$ and $\bm{R}\leftarrow \bm{X}^*\bm{R}\bm{X}$.
}
\STATE{\textbf{Step 5}.
    Compute $\bm{P} = 
    \left[\begin{array}{cccc}
       \frac{\bm{R}_{11}-\bm{R}_{11}^*}{2\Tilde{\sigma}_1}   & \frac{\Tilde{\sigma}_2\bm{R}_{12}+\Tilde{\sigma}_1\bm{R}_{21}^*}{\Tilde{\sigma}_2^2-\Tilde{\sigma}_1^2} & \cdots & \frac{\Tilde{\sigma}_p\bm{R}_{1p}+\Tilde{\sigma}_1\bm{R}_{p1}^*}{\Tilde{\sigma}_p^2-\Tilde{\sigma}_1^2} \\
       \frac{\Tilde{\sigma}_1\bm{R}_{21}+\Tilde{\sigma}_2\bm{R}_{12}^*}{\Tilde{\sigma}_1^2-\Tilde{\sigma}_2^2}  & \frac{\bm{R}_{22}-\bm{R}_{22}^*}{2\Tilde{\sigma}_2} & \cdots & \frac{\Tilde{\sigma}_p\bm{R}_{2p}+\Tilde{\sigma}_2\bm{R}_{p2}^*}{\Tilde{\sigma}_p^2-\Tilde{\sigma}_2^2} \\
       \vdots & \vdots & \ddots & \vdots \\
       \frac{\Tilde{\sigma}_1\bm{R}_{p1}+\Tilde{\sigma}_p\bm{R}_{1p}^*}{\Tilde{\sigma}_1^2-\Tilde{\sigma}_p^2} & \frac{\Tilde{\sigma}_2\bm{R}_{p2}+\Tilde{\sigma}_p\bm{R}_{2p}^*}{\Tilde{\sigma}_2^2-\Tilde{\sigma}_p^2} & \cdots & \frac{\bm{R}_{pp}-\bm{R}_{pp}^*}{2\Tilde{\sigma}_p}
    \end{array}\right] $\;.}
\STATE{\textbf{Step 6}.
    Compute $\bm{Q} = 
    \left[\begin{array}{cccc}
       \bm{O}_{r_1}   & \frac{\Tilde{\sigma}_1\bm{R}_{12}+\Tilde{\sigma}_2\bm{R}_{21}^*}{\Tilde{\sigma}_2^2-\Tilde{\sigma}_1^2} & \cdots & \frac{\Tilde{\sigma}_1\bm{R}_{1p}+\Tilde{\sigma}_p\bm{R}_{p1}^*}{\Tilde{\sigma}_p^2-\Tilde{\sigma}_1^2} \\
       \frac{\Tilde{\sigma}_2\bm{R}_{21}+\Tilde{\sigma}_1\bm{R}_{12}^*}{\Tilde{\sigma}_1^2-\Tilde{\sigma}_2^2}  & \bm{O}_{r_2} & \cdots & \frac{\Tilde{\sigma}_2\bm{R}_{2p}+\Tilde{\sigma}_p\bm{R}_{p2}^*}{\Tilde{\sigma}_p^2-\Tilde{\sigma}_2^2} \\
       \vdots & \vdots & \ddots & \vdots \\
       \frac{\Tilde{\sigma}_p\bm{R}_{p1}+\Tilde{\sigma}_1\bm{R}_{1p}^*}{\Tilde{\sigma}_1^2-\Tilde{\sigma}_p^2} & \frac{\Tilde{\sigma}_p\bm{R}_{p2}+\Tilde{\sigma}_2\bm{R}_{2p}^*}{\Tilde{\sigma}_2^2-\Tilde{\sigma}_p^2} & \cdots & \bm{O}_{r_p}
    \end{array}\right] $\;.}
\STATE{\textbf{Step 7}. Set $\bm{U}_{\rm{i}}=\bm{U}_{\rm{s}}\bm{P}+(\bm{I}_m-\bm{U}_{\rm{s}}\bm{U}_{\rm{s}}^*)\bm{A}_{\rm{i}}\bm{V}_{\rm{s}}\bm{\Sigma}_{\rm{s}}^{-1}$\;;}
\STATE{\quad\quad\quad\quad\quad\; $\bm{V}_{\rm{i}}  =\bm{V}_{\rm{s}}\bm{Q}+(\bm{I}_n-\bm{V}_{\rm{s}}\bm{V}_{\rm{s}}^*)\bm{A}_{\rm{i}}^*\bm{U}_{\rm{s}}\bm{\Sigma}_{\rm{s}}^{-1}$\;;}
\STATE{\quad\quad\quad\quad\quad\; $\bm{\Sigma}_{\rm{i}}  = \frac{1}{2} {\rm{Diag}}(\bm{R}+\bm{R}^*)$\;.}
\ENDWHILE
\ENSURE{Dual complex matrices $\bm{U} = \bm{U}_{\rm{s}}+\bm{U}_{\rm{i}}\epsilon\in\mathbb{DC}^{m\times r}$ and $\bm{V} = \bm{V}_{\rm{s}}+\bm{V}_{\rm{i}}\epsilon\in\mathbb{DC}^{n\times r}$ both having unitary columns; diagonal positive dual matrix $\bm{\Sigma} = \bm{\Sigma}_{\rm{s}}+\bm{\Sigma}_{\rm{i}}\epsilon\in\mathbb{DR}^{r\times r}$.}
\end{algorithmic}
\end{algorithm}

In most practical and numerical problems, the nonzero singular values of matrices can be regarded as single, i.e., with multiplicity one, which can deduce as a simple case for the CDSVD. 

\begin{theorem}\label{the2.1}
Let $\bm{A}=\bm{A}_{\rm{s}}+\bm{A}_{\rm{i}}\epsilon\in\mathbb{DC}^{m\times n} (m\geq n)$. Assume $\bm{A}_{\rm{s}} = \bm{U}_{\rm{s}}\bm{\Sigma}_{\rm{s}} \bm{V}_{\rm{s}}^*$ is a compact SVD of $\bm{A}_{\rm{s}}$, where $\bm{U}_{\rm{s}}\in\mathbb{C}^{m\times r}$, $\bm{V}_{\rm{s}}\in\mathbb{C}^{n\times r}$ have unitary columns and $\bm{\Sigma}_{\rm{s}}={\rm{diag}}(\sigma_1,\sigma_2,\cdots,\sigma_r)\in\mathbb{R}^{r\times r}$ is diagonal positive with $\sigma_1>\sigma_2>\cdots>\sigma_r>0$. Then the compact dual SVD (CDSVD) of $\bm{A}$ exists if and only if 
\begin{align}
    (\bm{I}_m-\bm{U}_{\rm{s}}\bm{U}_{\rm{s}}^*)\bm{A}_{\rm{i}}(\bm{I}_n-\bm{V}_{\rm{s}}\bm{V}_{\rm{s}}^*)=\bm{O}_{m\times n}\;.\label{12}
\end{align}
Furthermore, if $\bm{A}$ has a CDSVD $\bm{A} = \bm{U}\bm{\Sigma}\bm{ V}^*$, where $\bm{U}=\bm{U}_{\rm{s}}+\bm{U}_{\rm{i}}\epsilon\in\mathbb{DC}^{m\times r}$ and $\bm{V}=\bm{V}_{\rm{s}}+\bm{V}_{\rm{i}}\epsilon\in\mathbb{DC}^{n\times r}$ both have unitary columns, and $\bm{\Sigma}=\bm{\Sigma}_{\rm{s}}+\bm{\Sigma}_{\rm{i}} \epsilon\in\mathbb{DR}^{r\times r}$ is diagonal positive, then
\begin{subequations}
    \begin{align}
&\bm{U}_{\rm{i}}=\bm{U}_{\rm{s}}\left[{\rm{sym}}(\bm{R}\bm{\Sigma}_{\rm{s}})\odot\bm{\Delta}+\bm{\Omega}+\frac{{\rm{Diag}}(\bm{R}-\bm{R}^*)}{2\bm{\Sigma}_{\rm{s}}}\right]+(\bm{I}_m-\bm{U}_{\rm{s}}\bm{U}_{\rm{s}}^*)\bm{A}_{\rm{i}}\bm{V}_{\rm{s}}\bm{\Sigma}_{\rm{s}}^{-1},\label{100a}\\
&\bm{V}_{\rm{i}}=\bm{V}_{\rm{s}}\big[{\rm{sym}}(\bm{\Sigma}_{\rm{s}} \bm{R})\odot\bm{\Delta}+\bm{\Omega}\big]+(\bm{I}_n-\bm{V}_{\rm{s}}\bm{V}_{\rm{s}}^*)\bm{A}_{\rm{i}}^*\bm{U}_{\rm{s}}\bm{\Sigma}_{\rm{s}}^{-1}\;,\label{100b}\\
&\bm{\Sigma}_{\rm{i}} = \frac{1}{2} {\rm{Diag}}\big({\rm{sym}}(\bm{R})\big) \;,\label{100c}
    \end{align}
\end{subequations}
where $\bm{R}:=\bm{U}_{\rm{s}}^*\bm{A}_{\rm{i}}\bm{V}_{\rm{s}}$, the division is an elementwise operation for only diagonal elements, $\bm{\Omega}\in\mathcal{B}_{rr}^{\mathcal{S}}$
%$\bm{\Omega} = {\rm{diag}}(\omega_1\imath,\omega_2\imath,\cdots,\omega_r\imath)\in\mathbb{C}^{r\times r}$ is a diagonal pure imaginary matrix, $\omega_1,\omega_2,\cdots,\omega_r$ are any real numbers 
and
\begin{align}\label{6}
    \bm{\Delta} = \left[\begin{array}{cccc}
       0  & \frac{1}{\sigma_2^2-\sigma_1^2} & \cdots & \frac{1}{\sigma_r^2-\sigma_1^2} \\
       \frac{1}{\sigma_1^2-\sigma_2^2}  & 0 & \cdots & \frac{1}{\sigma_r^2-\sigma_2^2} \\
       \vdots & \vdots & \ddots & \vdots \\
       \frac{1}{\sigma_1^2-\sigma_r^2} & \frac{1}{\sigma_2^2-\sigma_r^2} & \cdots & 0
    \end{array}\right].
\end{align}
\end{theorem}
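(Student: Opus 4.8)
The plan is to deduce \Cref{the2.1} from \Cref{the3.1summ} by specializing to simple singular values, i.e.\ $p=r$ and $r_1=\cdots=r_r=1$, with one small refinement that is genuinely needed. Under this specialization the partitioned $\bm{\Sigma}_{\rm{s}}={\rm{diag}}(\Tilde{\sigma}_1\bm{I}_{r_1},\dots,\Tilde{\sigma}_p\bm{I}_{r_p})$ collapses to ${\rm{diag}}(\sigma_1,\dots,\sigma_r)$; the auxiliary matrix $\widetilde{\bm{\Delta}}$ of \cref{88} becomes the scalar-entry matrix $\bm{\Delta}$ of \cref{6}; every ``diagonal block'' $\bm{R}_{tt}$ is now a scalar, so $\widetilde{\bm{\Psi}}$ turns into the diagonal matrix $\frac{{\rm{Diag}}(\bm{R}-\bm{R}^*)}{2\bm{\Sigma}_{\rm{s}}}$, and $\mathcal{B}_{rp}^{\mathcal{S}}$ becomes $\mathcal{B}_{rr}^{\mathcal{S}}$ (diagonal matrices with purely imaginary diagonal, in the role of the free parameter $\bm{\Omega}$). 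The existence criterion \cref{12} is then literally \cref{333333}. The refinement is this: in \Cref{the3.1summ} the formulas for $\bm{U}_{\rm{i}},\bm{V}_{\rm{i}},\bm{\Sigma}_{\rm{i}}$ are asserted only for a \emph{particular} pair $(\bm{U}_{\rm{s}},\bm{V}_{\rm{s}})$ making each $\frac{\bm{R}_{tt}+\bm{R}_{tt}^*}{2}$ diagonal and real; but when $\bm{R}_{tt}$ is $1\times1$ this quantity is automatically a real scalar, so \emph{every} compact SVD of $\bm{A}_{\rm{s}}$ qualifies (equivalently, the block-diagonal unitary $\bm{X}$ in the proof of \Cref{the3.1summ} may be taken as $\bm{I}_r$), which is exactly why the formulas may be stated for an arbitrary compact SVD of $\bm{A}_{\rm{s}}$.

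To make this precise I would repeat the argument of \Cref{the3.1summ} with $p=r$. Given a CDSVD $\bm{A}=\bm{U}\bm{\Sigma}\bm{V}^*$ whose standard parts are the prescribed $\bm{U}_{\rm{s}},\bm{V}_{\rm{s}},\bm{\Sigma}_{\rm{s}}$, put $\bm{P}:=\bm{U}_{\rm{s}}^*\bm{U}_{\rm{i}}$, $\bm{Q}:=\bm{V}_{\rm{s}}^*\bm{V}_{\rm{i}}$, $\bm{R}:=\bm{U}_{\rm{s}}^*\bm{A}_{\rm{i}}\bm{V}_{\rm{s}}$. Since $\bm{U},\bm{V}$ have unitary columns, \Cref{pro1.1} $({\rm{v}})$ gives $\bm{P}+\bm{P}^*=\bm{Q}+\bm{Q}^*=\bm{O}$; the representative form \cref{4} of the CDSVD gives \cref{99a}, and pre-/post-multiplying \cref{99a} by $\bm{U}_{\rm{s}}^*$ and $\bm{V}_{\rm{s}}$ yields $\bm{\Sigma}_{\rm{i}}=\bm{R}-\bm{P}\bm{\Sigma}_{\rm{s}}-\bm{\Sigma}_{\rm{s}}\bm{Q}^*$. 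As $\bm{\Sigma}$ is a diagonal positive dual real matrix, $\bm{\Sigma}_{\rm{i}}$ is diagonal and real, hence lies in $\mathcal{B}_{rr}^{\mathcal{H}}$; so $\bm{P},\bm{Q},\bm{R}$ satisfy exactly the hypotheses of \Cref{lemma3.1} with $p=r$, forcing $\bm{P},\bm{Q}$ into the form \cref{7a,7b}, which under the collapses above are precisely the bracketed expressions in \cref{100a,100b}.

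It then remains to feed this into \Cref{le2.2}: \cref{99a,99b,99c} hold, so that lemma simultaneously forces the consistency condition \cref{7} (which is \cref{12}) and pins down $\bm{U}_{\rm{i}},\bm{V}_{\rm{i}},\bm{\Sigma}_{\rm{i}}$ through \cref{11a,11b,11c}; substituting the expressions of $\bm{P},\bm{Q}$ turns \cref{11a,11b} into \cref{100a,100b}, while \cref{100c} comes from $\bm{\Sigma}_{\rm{i}}=\bm{R}-\bm{P}\bm{\Sigma}_{\rm{s}}-\bm{\Sigma}_{\rm{s}}\bm{Q}^*={\rm{diag}}\bigl(\tfrac{\bm{R}_{11}+\bm{R}_{11}^*}{2},\dots,\tfrac{\bm{R}_{rr}+\bm{R}_{rr}^*}{2}\bigr)=\tfrac12{\rm{Diag}}({\rm{sym}}(\bm{R}))$ (the note following \Cref{lemma3.1}, or directly because $\bm{\Sigma}_{\rm{s}},\bm{\Sigma}_{\rm{i}}$ are diagonal real and $\bm{P},\bm{Q}$ skew-Hermitian). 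For the converse, I would assume \cref{12}, take the prescribed compact SVD of $\bm{A}_{\rm{s}}$, define $\bm{P},\bm{Q}$ by \cref{7a,7b} with free term $\bm{\Omega}=\bm{O}$, and define $\bm{U}_{\rm{i}},\bm{V}_{\rm{i}},\bm{\Sigma}_{\rm{i}}$ by \cref{100a,100b,100c}; \Cref{le2.2} then guarantees \cref{99a}, i.e.\ the representative-form identity \cref{4}, so $\bm{A}=\bm{U}\bm{\Sigma}\bm{V}^*$, while $\bm{U}_{\rm{s}}^*\bm{U}_{\rm{i}}=\bm{P}$ and $\bm{V}_{\rm{s}}^*\bm{V}_{\rm{i}}=\bm{Q}$ being skew-Hermitian makes $\bm{U},\bm{V}$ have unitary columns (\Cref{pro1.1} $({\rm{v}})$) and $\bm{\Sigma}_{\rm{i}}$ diagonal real makes $\bm{\Sigma}$ diagonal positive.

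No step is genuinely hard: the substance is carried entirely by \Cref{the3.1summ} and \Cref{lemma3.1,le2.2}. The only thing to be careful about is the bookkeeping of the notational collapses $\widetilde{\bm{\Delta}}\to\bm{\Delta}$, $\widetilde{\bm{\Psi}}\to\frac{{\rm{Diag}}(\bm{R}-\bm{R}^*)}{2\bm{\Sigma}_{\rm{s}}}$, $\mathcal{B}_{rp}^{\mathcal{S}}\to\mathcal{B}_{rr}^{\mathcal{S}}$, and the observation that the ``particular pair'' qualifier of \Cref{the3.1summ} becomes vacuous once all singular values are simple, so the formulas do hold for any compact SVD of $\bm{A}_{\rm{s}}$.
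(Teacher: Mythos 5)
Your proposal is correct and follows essentially the same route as the paper, which presents this theorem as the multiplicity-one specialization of \Cref{the3.1summ} (built on \Cref{lemma3.1} and \Cref{le2.2}); your notational collapses $\widetilde{\bm{\Delta}}\to\bm{\Delta}$, $\widetilde{\bm{\Psi}}\to\frac{{\rm{Diag}}(\bm{R}-\bm{R}^*)}{2\bm{\Sigma}_{\rm{s}}}$, $\mathcal{B}_{rp}^{\mathcal{S}}\to\mathcal{B}_{rr}^{\mathcal{S}}$ are exactly right. Your added observation that the ``particular pair'' requirement becomes vacuous when all singular values are simple (each $\frac{\bm{R}_{tt}+\bm{R}_{tt}^*}{2}$ is automatically a real scalar, so one may take $\bm{X}=\bm{I}_r$) correctly justifies stating the formulas for an arbitrary compact SVD of $\bm{A}_{\rm{s}}$, a point the paper leaves implicit.
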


While \Cref{the3.1summ} provides the expressions of $\bm{U}_{\rm{i}}$ and $\bm{V}_{\rm{i}}$, they are not unique since $\bm{\widetilde{\Omega}}$ has some degrees of freedom. 
Especially, when considering the common case shown in \Cref{the2.1}, $\bm{\Omega}\in\mathcal{B}_{rr}^{\mathcal{S}}$ implies that it is an arbitrary diagonal pure imaginary matrix.
Actually, the value of $\bm{\Omega}$ can be fixed if some elements of $\bm{V}$ are required to be dual numbers. The following corollary summarizes this.

\begin{corollary}
Let $\bm{A}=\bm{A}_{\rm{s}}+\bm{A}_{\rm{i}}\epsilon=\bm{U\Sigma V^*}\in\mathbb{DC}^{m\times n} (m\geq n)$ be the CDSVD of $\bm{A}$ and the nonzero singular values of $\bm{A}_{s}$ is with multiplicity one. For the $t$-th column of $\bm{V}_{\rm{s}}$, there exists a nonzero real element, which is represented by $\bm{V}_{\rm{s}}(\ell_t,t)$. Then the corresponding $(\ell_t,t)$ element of $\bm{V}_{\rm{i}}$ is also real if and only if the expression of $\bm{\Omega}$ is
\begin{align}
    \bm{\Omega} &= {\rm{diag}}(\omega_1\imath,\omega_2\imath,\cdots,\omega_r\imath)\nonumber\;,\\
    \omega_t  &= \frac{-{\rm{Im}}(\widetilde{\bm{V}}_{\rm{i}}(\ell_t,t))}{\bm{V}_{\rm{s}}(\ell_t,t)}\;,\;(t=1,2,\cdots,r)\;,\nonumber
\end{align}
where $\widetilde{\bm{V}}_{\rm{i}}=\bm{V}_{\rm{s}}\big[{\rm{sym}}(\bm{\Sigma}_{\rm{s}} \bm{R})\odot\bm{\Delta}\big]+(\bm{I}_n-\bm{V}_{\rm{s}}\bm{V}_{\rm{s}}^*)\bm{A}_{\rm{i}}^*\bm{U}_{\rm{s}}\bm{\Sigma}_{\rm{s}}^{-1}$ and ${\rm{Im}}(v)$ represents the imaginary part of the complex number $v$.
\end{corollary}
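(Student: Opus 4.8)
The plan is to reduce the statement to a one-line computation using the explicit formula \cref{100b} for $\bm{V}_{\rm{i}}$ from \Cref{the2.1}. First I would note that, because every nonzero singular value of $\bm{A}_{\rm{s}}$ has multiplicity one, all blocks have size $r_i=1$, so $\mathcal{B}_{rr}^{\mathcal{S}}$ is exactly the set of diagonal matrices $\bm{\Omega}={\rm{diag}}(a_1,\dots,a_r)$ with $a_t=-\bar a_t$, i.e. $a_t=\omega_t\imath$ for some $\omega_t\in\mathbb{R}$. Thus the degrees of freedom in the CDSVD formula for $\bm{V}_{\rm{i}}$ are precisely the real scalars $\omega_1,\dots,\omega_r$, and by \cref{100b} we may split $\bm{V}_{\rm{i}}=\widetilde{\bm{V}}_{\rm{i}}+\bm{V}_{\rm{s}}\bm{\Omega}$, where $\widetilde{\bm{V}}_{\rm{i}}$ is the $\bm{\Omega}$-independent term given in the statement.

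Next I would isolate the $(\ell_t,t)$ entry. Since $\bm{\Omega}$ is diagonal, the $t$-th column of $\bm{V}_{\rm{s}}\bm{\Omega}$ is $\omega_t\imath$ times the $t$-th column of $\bm{V}_{\rm{s}}$, so its $(\ell_t,t)$ entry is $\omega_t\imath\,\bm{V}_{\rm{s}}(\ell_t,t)$. As $\bm{V}_{\rm{s}}(\ell_t,t)$ is real, this contribution is purely imaginary, hence
\[
{\rm{Im}}\big(\bm{V}_{\rm{i}}(\ell_t,t)\big)={\rm{Im}}\big(\widetilde{\bm{V}}_{\rm{i}}(\ell_t,t)\big)+\omega_t\,\bm{V}_{\rm{s}}(\ell_t,t).
\]
Therefore $\bm{V}_{\rm{i}}(\ell_t,t)$ is real if and only if the right-hand side vanishes, and since $\bm{V}_{\rm{s}}(\ell_t,t)\neq0$ this is equivalent to $\omega_t=-{\rm{Im}}(\widetilde{\bm{V}}_{\rm{i}}(\ell_t,t))/\bm{V}_{\rm{s}}(\ell_t,t)$. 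Carrying this out for $t=1,\dots,r$ yields both directions of the equivalence and the stated formula for $\bm{\Omega}$; I would also remark that fixing $\bm{\Omega}$ this way simultaneously determines $\bm{U}_{\rm{i}}$ through \cref{100a}, but that is not needed for the claim.

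There is no genuine obstacle here: the entire content is the observation that $\bm{\Omega}$ perturbs column $t$ of $\bm{V}_{\rm{i}}$ only by a purely imaginary multiple of column $t$ of $\bm{V}_{\rm{s}}$, so one real parameter per column is exactly enough to annihilate the imaginary part of one chosen real-pivot entry. The only steps deserving a word of justification are (a) identifying $\mathcal{B}_{rr}^{\mathcal{S}}$ in the multiplicity-one case with the diagonal pure-imaginary matrices, and (b) the separation $\bm{V}_{\rm{i}}=\widetilde{\bm{V}}_{\rm{i}}+\bm{V}_{\rm{s}}\bm{\Omega}$ read off from \cref{100b}; everything else is elementary bookkeeping with real and imaginary parts.
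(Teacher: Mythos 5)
Your proposal is correct, and it is exactly the argument the paper intends: the corollary is stated right after the remark that in the multiplicity-one case $\bm{\Omega}\in\mathcal{B}_{rr}^{\mathcal{S}}$ is an arbitrary diagonal pure-imaginary matrix, and the splitting $\bm{V}_{\rm{i}}=\widetilde{\bm{V}}_{\rm{i}}+\bm{V}_{\rm{s}}\bm{\Omega}$ is read directly off \cref{100b}, so fixing each $\omega_t$ to cancel ${\rm{Im}}\big(\widetilde{\bm{V}}_{\rm{i}}(\ell_t,t)\big)$ against the real pivot $\bm{V}_{\rm{s}}(\ell_t,t)\neq 0$ gives both directions. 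The paper itself leaves this verification to the reader, and your entrywise computation fills it in correctly with no gaps.
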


\section{Low Rank Approximation of Dual Complex Matrices}\label{sec:low rank}

In theory, the sum of the first $k$ components of the SVD provides the optimal approximation for any matrices with rank at most $k$ \cite{stewart1993early}. 
In practice, a large-scale data matrix can be compressed into a low-rank matrix that still contains the essence of the information. Similarly, for dual complex matrices, low rank approximation is also one of the most important applications of the CDSVD. We begin with the definition of the rank-$r$ dual complex matrices.

\subsection{Rank of Dual Complex Matrices}
It is evident that a rank-$r$ $m$-by-$n$ complex matrix can be represented as $\bm{U\Sigma V}^*$, where $\bm{U}\in\mathbb{C}^{m\times r}$, $\bm{V}\in\mathbb{C}^{n\times r}$ have unitary columns and $\bm{\Sigma}\in\mathbb{R}^{r\times r}$ is diagonal positive. 
We wonder if this holds for dual complex matrices as well.

Wang et al. \cite{wang2022dual} defined a full column rank dual matrix, whose standard part is a full column rank matrix.
They then developed the dual rank-$r$ decomposition. 
An $m$-by-$n$ dual complex matrix $\bm{A}$ whose standard part $\bm{A}_{\rm{s}}$ is of rank $r$ can be decomposed as the product of two $r$ full rank dual matrices, $\bm{A} = \bm{BC}$, if and only if the $r$ full rank decomposition $\bm{A}_{\rm{s}} =\bm{B}_{\rm{s}}\bm{C}_{\rm{s}}$ satisfies $\left(\bm{I}_m - \bm{B}_{\rm{s}}\bm{B}_{\rm{s}}^\dag\right)\bm{A}_{\rm{i}}\left(\bm{I}_n-\bm{C}_{\rm{s}}^\dag\bm{C}_{\rm{s}}\right)=\bm{O}_{m \times n}$.

%In the following theorem, we claim that a rank-$r$ $m$-by-$n$ dual complex matrix can be regarded as $\bm{U}_r\bm{\Sigma}_r \bm{V}_r^*$, where $\bm{U}_r \in\mathbb{DC}^{m\times r}$, $\bm{V}_r\in\mathbb{DC}^{n\times r}$ have unitary columns, and $\bm{\Sigma}_r\in\mathbb{DR}^{r\times r}$ is diagonal positive.
Inspired by the above dual rank-$r$ decomposition, we claim that its existence and that of the CDSVD are equivalent in the following theorem.
\begin{theorem}\label{equiva}
Let $\bm{A}=\bm{A}_{\rm{s}}+\bm{A}_{\rm{i}}\epsilon\in\mathbb{DC}^{m\times n} (m\geq n)$. Assume that $\bm{A}_{\rm{s}} = \bm{U}_{\rm{s}}\bm{\Sigma}_{\rm{s}} \bm{V}_{\rm{s}}^*$ is a compact SVD of $\bm{A}_{\rm{s}}$, where $\bm{U}_{\rm{s}}\in\mathbb{C}^{m\times r}$, $\bm{V}_{\rm{s}}\in\mathbb{C}^{n\times r}$ both have unitary columns, and $\bm{\Sigma}_{\rm{s}}\in\mathbb{R}^{r\times r}$ is diagonal positive. Then the following conditions are equivalent:
\begin{enumerate}[(i)]
    \item[${\rm{(i)}}$] $(\bm{I}_m-\bm{U}_{\rm{s}}\bm{U}_{\rm{s}}^*)\bm{A}_{\rm{i}}(\bm{I}_n-\bm{V}_{\rm{s}}\bm{V}_{\rm{s}}^*)=\bm{O}_{m\times n}$;
    \item[${\rm{(ii)}}$] The CDSVD of $\bm{A}$ exists;
    \item[${\rm{(iii)}}$] The dual rank-$r$ decomposition of $\bm{A}$ exists.
\end{enumerate}
\end{theorem}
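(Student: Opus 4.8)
The plan is to close the cycle $(\mathrm{i})\Rightarrow(\mathrm{ii})\Rightarrow(\mathrm{iii})\Rightarrow(\mathrm{i})$. The first arrow is already available: it is exactly the sufficiency direction of \Cref{the3.1summ}, which shows that condition $(\mathrm{i})$ forces a CDSVD of $\bm{A}$ to exist (with the explicit infinitesimal parts exhibited there). So the real content is to splice the dual rank-$r$ decomposition into this chain, and for that I would lean on the characterization of Wang et al.\ recalled just above the theorem.

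For $(\mathrm{ii})\Rightarrow(\mathrm{iii})$: assume $\bm{A}=\bm{U}\bm{\Sigma}\bm{V}^*$ is a CDSVD and regroup the factors as $\bm{A}=(\bm{U}\bm{\Sigma})\bm{V}^*$. I would check that this is a dual rank-$r$ decomposition in the sense of Wang et al., i.e.\ that both factors are full-rank dual matrices: the standard part of $\bm{U}\bm{\Sigma}$ is $\bm{U}_{\rm{s}}\bm{\Sigma}_{\rm{s}}\in\mathbb{C}^{m\times r}$, which has full column rank because $\bm{U}_{\rm{s}}$ has unitary columns and $\bm{\Sigma}_{\rm{s}}$ is diagonal positive, while the standard part of $\bm{V}^*$ is $\bm{V}_{\rm{s}}^*\in\mathbb{C}^{r\times n}$, which has full row rank. (The grouping $\bm{A}=\bm{U}(\bm{\Sigma}\bm{V}^*)$ works equally well.)

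For $(\mathrm{iii})\Rightarrow(\mathrm{i})$: if $\bm{A}=\bm{B}\bm{C}$ is a dual rank-$r$ decomposition, then $\bm{A}_{\rm{s}}=\bm{B}_{\rm{s}}\bm{C}_{\rm{s}}$ is an ordinary full-rank decomposition and, by Wang et al., $(\bm{I}_m-\bm{B}_{\rm{s}}\bm{B}_{\rm{s}}^\dag)\bm{A}_{\rm{i}}(\bm{I}_n-\bm{C}_{\rm{s}}^\dag\bm{C}_{\rm{s}})=\bm{O}_{m\times n}$. It then remains to identify the two projectors with $\bm{U}_{\rm{s}}\bm{U}_{\rm{s}}^*$ and $\bm{V}_{\rm{s}}\bm{V}_{\rm{s}}^*$. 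This is standard: $\bm{B}_{\rm{s}}\bm{B}_{\rm{s}}^\dag$ is the orthogonal projector onto $\Range(\bm{B}_{\rm{s}})$, and since $\bm{C}_{\rm{s}}$ has full row rank, $\Range(\bm{B}_{\rm{s}})=\Range(\bm{B}_{\rm{s}}\bm{C}_{\rm{s}})=\Range(\bm{A}_{\rm{s}})=\Range(\bm{U}_{\rm{s}})$, so (using that $\bm{U}_{\rm{s}}$ has unitary columns) $\bm{B}_{\rm{s}}\bm{B}_{\rm{s}}^\dag=\bm{U}_{\rm{s}}\bm{U}_{\rm{s}}^*$; symmetrically $\bm{C}_{\rm{s}}^\dag\bm{C}_{\rm{s}}$ is the orthogonal projector onto $\Range(\bm{C}_{\rm{s}}^*)=\Range(\bm{A}_{\rm{s}}^*)=\Range(\bm{V}_{\rm{s}})$, hence equals $\bm{V}_{\rm{s}}\bm{V}_{\rm{s}}^*$. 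Substituting yields condition $(\mathrm{i})$ verbatim, which in turn gives $(\mathrm{ii})$ by \Cref{the3.1summ}, closing the loop.

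I do not expect a serious obstacle here, since the heavy lifting is already carried by \Cref{the3.1summ} and by the cited result of Wang et al.\ The one point to handle with a little care is the projector identification in the last step: the fact that $\bm{B}_{\rm{s}}\bm{B}_{\rm{s}}^\dag$ and $\bm{C}_{\rm{s}}^\dag\bm{C}_{\rm{s}}$ depend only on $\Range(\bm{A}_{\rm{s}})$ and $\Range(\bm{A}_{\rm{s}}^*)$, not on the particular full-rank factorization chosen; making this explicit is what lets Wang et al.'s condition be matched against condition $(\mathrm{i})$ of the theorem on the nose.
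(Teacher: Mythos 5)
Your proposal is correct and follows essentially the same route as the paper: (i)$\Leftrightarrow$(ii) via \Cref{the3.1summ}, (ii)$\Rightarrow$(iii) by regrouping the CDSVD factors into a full-rank product (the paper uses $\bm{U}(\bm{\Sigma}\bm{V}^*)$, you use $(\bm{U}\bm{\Sigma})\bm{V}^*$), and (iii)$\Rightarrow$(i) via Wang et al.'s existence condition followed by identifying $\bm{B}_{\rm{s}}\bm{B}_{\rm{s}}^\dag=\bm{U}_{\rm{s}}\bm{U}_{\rm{s}}^*$ and $\bm{C}_{\rm{s}}^\dag\bm{C}_{\rm{s}}=\bm{V}_{\rm{s}}\bm{V}_{\rm{s}}^*$. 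The only cosmetic difference is that you identify these projectors by a range argument, whereas the paper writes $\bm{B}_{\rm{s}}=\bm{U}_{\rm{s}}\bm{X}$, $\bm{C}_{\rm{s}}^*=\bm{V}_{\rm{s}}\bm{Y}$ with nonsingular $\bm{X},\bm{Y}$ and computes the pseudoinverses explicitly.
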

\begin{proof}
According to \Cref{the3.1summ}, we get (i)$\Leftrightarrow$(ii).

(ii)$\Rightarrow$(iii): From \Cref{the3.1summ}, suppose that the CDSVD of $\bm{A}$ exists and there is a particular pair $(\bm{U}_{\rm{s}},\bm{V}_{\rm{s}})$ such that $\bm{A} = \bm{U\Sigma V}^*$,  where $\bm{U}=\bm{U}_{\rm{s}}+\bm{U}_{\rm{i}}\epsilon\in\mathbb{DC}^{m\times r}$ and $\bm{V}=\bm{V}_{\rm{s}}+\bm{V}_{\rm{i}}\epsilon\in\mathbb{DC}^{n\times r}$ both have unitary columns, and $\bm{\Sigma}=\bm{\Sigma}_{\rm{s}}+\bm{\Sigma}_{\rm{i}} \epsilon\in\mathbb{DR}^{r\times r}$ is diagonal positive. 
Let $\bm{B} = \bm{U}$ and $\bm{C} = \bm{\Sigma V}^*$, then $\bm{B}_{\rm{s}} = \bm{U}_{\rm{s}}$ is of $r$ full column rank and $\bm{C}_{\rm{s}} = \bm{\Sigma}_{\rm{s}}\bm{V}_{\rm{s}}^*$ is of $r$ full row rank. 
Moreover, $\left(\bm{I}_m - \bm{B}_{\rm{s}}\bm{B}_{\rm{s}}^\dag\right)\bm{A}_{\rm{i}}\left(\bm{I}_n-\bm{C}_{\rm{s}}^\dag\bm{C}_{\rm{s}}\right)=(\bm{I}_m-\bm{U}_{\rm{s}}\bm{U}_{\rm{s}}^*)\bm{A}_{\rm{i}}(\bm{I}_n-\bm{V}_{\rm{s}}\bm{V}_{\rm{s}}^*)=\bm{O}_{m\times n}$. Hence, $\bm{A}=\bm{BC}$ is a dual rank-$r$ decomposition of $\bm{A}$.

(iii)$\Rightarrow$(i): Suppose that the dual rank-$r$ decomposition of $\bm{A}$ exists and has the form $\bm{A}=\bm{BC}$, where $\bm{B}= \bm{B}_{\rm{s}}+\bm{B}_{\rm{i}}\epsilon\in\mathbb{DC}^{m\times r}$ is of $r$ full column rank and $\bm{C}= \bm{C}_{\rm{s}}+\bm{C}_{\rm{i}}\epsilon\in\mathbb{DC}^{r\times n}$ is of $r$ full row rank. 
Then we have $\bm{A}_{\rm{s}}=\bm{B}_{\rm{s}}\bm{C}_{\rm{s}}$.
On account of the fact that $\bm{A}_{\rm{s}} = \bm{U}_{\rm{s}}\bm{\Sigma}_{\rm{s}} \bm{V}_{\rm{s}}^*$ is a compact SVD of $\bm{A}_{\rm{s}}$, column spaces fulfill ${\rm{Ran}}(\bm{U}_{\rm{s}})={\rm{Ran}}(\bm{B}_{\rm{s}})$ and ${\rm{Ran}}(\bm{V}_{\rm{s}})={\rm{Ran}}(\bm{C}^*_{\rm{s}})$.
Thus, there exist nonsingular $\bm{X}\in\mathbb{C}^{r\times r}$ and $\bm{Y}\in\mathbb{C}^{r\times r}$ such that $\bm{B}_{\rm{s}} = \bm{U}_{\rm{s}}\bm{X}$ and $\bm{C}_{\rm{s}}^* = \bm{V}_{\rm{s}}\bm{Y}$. It can be
inferred that $\bm{B}_{\rm{s}}^\dag = \bm{X}^{-1}\bm{U}_{\rm{s}}^*$ and $\bm{C}_{\rm{s}}^\dag = \bm{V}_{\rm{s}}(\bm{Y}^*)^{-1}$. 
Moreover, based on the existence condition of the dual rank-$r$ decomposition, we have $\left(\bm{I}_m - \bm{B}_{\rm{s}}\bm{B}_{\rm{s}}^\dag\right)\bm{A}_{\rm{i}}\left(\bm{I}-\bm{C}_{\rm{s}}^\dag\bm{C}_{\rm{s}}\right)=\bm{O}_{m\times n}$, that is, $(\bm{I}_m-\bm{U}_{\rm{s}}\bm{U}_{\rm{s}}^*)\bm{A}_{\rm{i}}(\bm{I}_n-\bm{V}_{\rm{s}}\bm{V}_{\rm{s}}^*)=\bm{O}_{m\times n}$.

Consequently, we conclude that conditions (i), (ii) and (iii) are equivalent.
\end{proof}

A dual complex matrix $\bm{A}$ is said to be of rank $r$ if and only if the rank of $\bm{A}_{\rm{s}}$ is $r$ and the dual rank-$r$ decomposition of $\bm{A}$ exists.
Thus, \Cref{equiva} confirms our previous conjecture that a rank-$k$ dual complex matrix $\bm{A}_k$ has the form $\bm{U}_k\bm{\Sigma}_k \bm{V}_k^*$, where $\bm{U}_k = \bm{U}_{\rm{s}}^{(k)}+\bm{U}_{\rm{i}}^{(k)}\epsilon\in\mathbb{DC}^{m\times k}$, $\bm{V}_k = \bm{V}_{\rm{s}}^{(k)}+\bm{V}_{\rm{i}}^{(k)}\epsilon\in\mathbb{DC}^{n\times k}$ have unitary columns, and $\bm{\Sigma}_k = \bm{\Sigma}_{\rm{s}}^{(k)}+\bm{\Sigma}_{\rm{i}}^{(k)}\epsilon\in\mathbb{DR}^{k\times k}$ is diagonal positive. 
Here, our goal is to find the optimal rank-$k$ approximation for the original dual matrix. 
%Thus, how to measure the distance between two dual matrices is particularly important. 
%In the next two subsections, we will introduce disparate low-rank approximate results under two different \textcolor{red}{metrics} of dual complex matrices. 
One aspect is to introduce the total order of dual numbers.
%We Defining the order of dual numbers first is essential, because metrics of dual complex matrices result in dual numbers. 
Given two dual numbers $p = p_{\rm{s}} + p_{\rm{i}}\epsilon$, $q = q_{\rm{s}} + q_{\rm{i}}\epsilon$, where $p_{\rm{s}}$, $p_{\rm{i}}$, $q_{\rm{s}}$ and $q_{\rm{i}}$ are all real numbers, we say that $p<  q$, if either $p_{\rm{s}} < q_{\rm{s}}$, or $p_{\rm{s}} = q_{\rm{s}}$ and $p_{\rm{i}}<q_{\rm{i}}$ \cite{qi2022dual}.
Another aspect is to suggest a reasonable measure of the distance between two dual matrices, such as metric or quasi-metric.
The following optimization problem shows the rank-$k$ approximation of a dual complex matrix $\bm{A}=\bm{A}_{\rm{s}}+\bm{A}_{\rm{i}}\epsilon\in\mathbb{DC}^{m\times n}$ under a given metric or quasi-metric $d:\mathbb{DC}^{m\times n}\times\mathbb{DC}^{m\times n}\rightarrow \mathbb{D}$ ($k$ is less than or equal to the rank of $\bm{A}_{\rm{s}}$):
\begin{align}
\begin{array}{cl}
\min\limits_{\bm{U}_k,\bm{V}_k,\bm{\Sigma}_k} & d^2\big( \bm{A}\;,\;\bm{U}_k\bm{\Sigma}_k \bm{V}_k^*\big)\;,    \\
    s.t. & \bm{U}_k^*\bm{U}_k = \bm{I}_k\;,\;
    \bm{V}_k^*\bm{V}_k = \bm{I}_k\;,\\
    & (\bm{\Sigma}_k)_{ij}=0\;(\forall i\neq j)\;, \; (\bm{\Sigma}_k)_{ii}>0\;(\forall i=1,\cdots,k) \;,\\
    & \bm{U}_k\in\mathbb{DC}^{m\times k}\;,\;\bm{V}_k\in\mathbb{DC}^{n\times k}\;,\;\bm{\Sigma}_k\in\mathbb{DR}^{k\times k}\;.
    \end{array}\label{17}
\end{align}

Here, the definition of quasi-metric mapping to the dual number is inspired by Rainio's definition of quasi-metric \cite{rainio2021intrinsic} that maps to the real number.
\begin{definition}\label{def:quasi-metric}
    For the space of dual complex matrices $\mathbb{DC}^{m\times n}$, a quasi-metric is a function $d:\mathbb{DC}^{m\times n}\times \mathbb{DC}^{m\times n}\rightarrow \mathbb{D}$ that fulfills the following three conditions for all $\bm{A}$, $\bm{B}$, $\bm{C}\in\mathbb{DC}^{m\times n}$, based on the total order of dual numbers:
    \begin{enumerate}
        \item[${\rm{(i)}}$] Positivity: $d(\bm{A},\bm{B})\geq 0$, and $d(\bm{A},\bm{B})= 0$ if and only if $\bm{A}=\bm{B}$.
        \item[${\rm{(ii)}}$] Symmetry: $d(\bm{A},\bm{B})=d(\bm{B},\bm{A})$.
        \item[${\rm{(iii)}}$] Weaker triangle  inequality: $d(\bm{A},\bm{B})\leq c\big(d(\bm{A},\bm{C})+d(\bm{C},\bm{B})\big)$ with some constant dual number $c>1$ independent of the variables $\bm{A}, \bm{B}, \bm{C}$.
    \end{enumerate}
\end{definition}

\subsection{\texorpdfstring{Rank-$k$ Approximation Under the Metric Induced by Frobenius Norm}{Rank-k Approximation Under the Metric Induced by Frobenius Norm}} 
Qi et al. \cite{qi2022low} proposed an Eckart-Young-like theorem revealing that the sum of the first $k$ components of the SVD for a dual complex matrix is the closest to the original one compared to any other dual matrices with rank at most $k$. \Cref{de3.1} shows the corresponding norm. 
\begin{definition}[\cite{qi2022low}]\label{de3.1}
Let $\bm{A} = \bm{A}_{\rm{s}}+\bm{A}_{\rm{i}}\epsilon\in\mathbb{DC}^{m\times n}$. Define the Frobenius norm of $\bm{A}$ as
\begin{align}
    \Vert \bm{A}\Vert_{F} =\left\{\begin{array}{cc}
         \Vert \bm{A}_{\rm{s}}\Vert_F+\frac{\langle \bm{A}_{\rm{s}},\bm{A}_{\rm{i}}\rangle}{\Vert \bm{A}_{\rm{s}}\Vert_F}\epsilon\;, & if \; \bm{A}_{\rm{s}}\neq \bm{O}_{m\times n}\;,  \\
          \Vert \bm{A}_{\rm{i}}\Vert_F\epsilon\;, & otherwise\;,
    \end{array} \right.
\end{align}
where the inner product of matrices is over the real number field, that is, $\langle \bm{A}_{\rm{s}},\bm{A}_{\rm{i}}\rangle = Re\big(trace(\bm{A}_{\rm{s}}^*\bm{A}_{\rm{i}})\big)$.
\end{definition}

However, we have found that under the induced metric, the infinitesimal part of the optimal approximate dual matrix $\bm{A}_k$ is arbitrary, as long as the standard part of $\bm{A}_k$ is the best approximation of the standard part of the original dual matrix $\bm{A}$.
%as long as the standard part of the approximate dual matrix $\bm{A}_k$ is chosen as the best approximation of the standard part of the original dual matrix $\bm{A}$, regardless of whatever the infinitesimal part of $\bm{A}_k$ takes, $\bm{A}_k$ is always closest to $\bm{A}$. 
\Cref{the_low_1} verifies this.
\begin{proposition}\label{the_low_1}
Let $\bm{A}=\bm{A}_{\rm{s}}+\bm{A}_{\rm{i}}\epsilon\in\mathbb{DC}^{m\times n}$. Then under the metric induced by Frobenius norm of dual complex matrices, the optimization problem \cref{17} always achieves a minimum value for any $\bm{U}_{\rm{i}}^{(k)},\bm{V}_{\rm{i}}^{(k)},\bm{\Sigma}_{\rm{i}}^{(k)}$, as long as $\bm{U}_{\rm{s}}^{(k)}\bm{\Sigma}_{\rm{s}}^{(k)}\big(\bm{V}_{\rm{s}}^{(k)}\big)^*$ is the best rank-$k$ approximation of $\bm{A}_{\rm{s}}$.
\end{proposition}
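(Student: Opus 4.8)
The idea is to turn the dual‑valued minimization in \cref{17} into a lexicographic pair of ordinary real minimizations, exploiting the total order on $\mathbb{D}$. First I would set $\bm{E} := \bm{A} - \bm{U}_k\bm{\Sigma}_k\bm{V}_k^* = \bm{E}_{\rm{s}} + \bm{E}_{\rm{i}}\epsilon$, so that $\bm{E}_{\rm{s}} = \bm{A}_{\rm{s}} - \bm{U}_{\rm{s}}^{(k)}\bm{\Sigma}_{\rm{s}}^{(k)}\big(\bm{V}_{\rm{s}}^{(k)}\big)^*$ and $\bm{E}_{\rm{i}} = \bm{A}_{\rm{i}} - \bm{B}_{\rm{i}}$ with
\[
\bm{B}_{\rm{i}} := \bm{U}_{\rm{i}}^{(k)}\bm{\Sigma}_{\rm{s}}^{(k)}\big(\bm{V}_{\rm{s}}^{(k)}\big)^* + \bm{U}_{\rm{s}}^{(k)}\bm{\Sigma}_{\rm{i}}^{(k)}\big(\bm{V}_{\rm{s}}^{(k)}\big)^* + \bm{U}_{\rm{s}}^{(k)}\bm{\Sigma}_{\rm{s}}^{(k)}\big(\bm{V}_{\rm{i}}^{(k)}\big)^*\;.
\]
Squaring the dual Frobenius norm of \Cref{de3.1} and using $\epsilon^2 = 0$ yields $d^2\big(\bm{A},\bm{U}_k\bm{\Sigma}_k\bm{V}_k^*\big) = \Vert\bm{E}\Vert_F^2 = \Vert\bm{E}_{\rm{s}}\Vert_F^2 + 2\langle\bm{E}_{\rm{s}},\bm{E}_{\rm{i}}\rangle\epsilon$ when $\bm{E}_{\rm{s}}\neq\bm{O}_{m\times n}$, and $d^2 = 0$ otherwise.

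By the total order on dual numbers, minimizing this dual number amounts to first minimizing the standard part $\Vert\bm{E}_{\rm{s}}\Vert_F^2$ and then, among the resulting minimizers, minimizing the infinitesimal part $2\langle\bm{E}_{\rm{s}},\bm{E}_{\rm{i}}\rangle$. Restricting the constraints of \cref{17} to standard parts (using \Cref{pro1.1}${\rm{(v)}}$) shows that $\bm{U}_{\rm{s}}^{(k)}\bm{\Sigma}_{\rm{s}}^{(k)}\big(\bm{V}_{\rm{s}}^{(k)}\big)^*$ ranges over exactly the complex matrices of rank at most $k$, so the classical Eckart--Young theorem \cite{stewart1993early} shows $\Vert\bm{E}_{\rm{s}}\Vert_F^2$ attains its minimum precisely when this product is the best rank-$k$ approximation of $\bm{A}_{\rm{s}}$; when $k$ equals the rank of $\bm{A}_{\rm{s}}$ this minimum is $0$, so $d^2 = 0$ for every choice of infinitesimal parts and there is nothing more to prove.

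The content of the argument is to show that, once the standard part is fixed at the best rank-$k$ approximation $\bm{A}_{\rm{s}}^{(k)}$, the infinitesimal part $\langle\bm{E}_{\rm{s}},\bm{E}_{\rm{i}}\rangle$ no longer depends on $\bm{U}_{\rm{i}}^{(k)},\bm{V}_{\rm{i}}^{(k)},\bm{\Sigma}_{\rm{i}}^{(k)}$. I would use the orthogonality of the residual: since $\bm{E}_{\rm{s}} = \bm{A}_{\rm{s}} - \bm{A}_{\rm{s}}^{(k)}$ is assembled from the trailing singular triplets of $\bm{A}_{\rm{s}}$, whereas $\Range\big(\bm{U}_{\rm{s}}^{(k)}\big) = \Range\big(\bm{A}_{\rm{s}}^{(k)}\big)$ and $\Range\big(\bm{V}_{\rm{s}}^{(k)}\big) = \Range\big((\bm{A}_{\rm{s}}^{(k)})^*\big)$ are spanned by the leading ones, one obtains $\bm{E}_{\rm{s}}^*\bm{U}_{\rm{s}}^{(k)} = \bm{O}$ and $\bm{E}_{\rm{s}}\bm{V}_{\rm{s}}^{(k)} = \bm{O}$. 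Substituting $\bm{B}_{\rm{i}}$ into $\langle\bm{E}_{\rm{s}},\bm{B}_{\rm{i}}\rangle = Re\big(trace(\bm{E}_{\rm{s}}^*\bm{B}_{\rm{i}})\big)$ and cycling the trace, each of the three summands carries a factor $\bm{E}_{\rm{s}}^*\bm{U}_{\rm{s}}^{(k)}$ or $\big(\bm{E}_{\rm{s}}\bm{V}_{\rm{s}}^{(k)}\big)^*$ and hence vanishes, so $\langle\bm{E}_{\rm{s}},\bm{B}_{\rm{i}}\rangle = 0$ and therefore $\langle\bm{E}_{\rm{s}},\bm{E}_{\rm{i}}\rangle = \langle\bm{E}_{\rm{s}},\bm{A}_{\rm{i}}\rangle$, a constant determined solely by the (fixed) optimal standard part. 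Consequently the objective of \cref{17} is the same dual number for every admissible choice of infinitesimal parts, and so each such choice attains the minimum.

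I expect the only delicate points to be bookkeeping rather than ideas. One should check that the feasible set of \cref{17} really projects onto exactly the complex matrices of rank at most $k$ at the standard level, so that Eckart--Young applies as stated, and that the residual orthogonality $\bm{E}_{\rm{s}}^*\bm{U}_{\rm{s}}^{(k)} = \bm{O}$, $\bm{E}_{\rm{s}}\bm{V}_{\rm{s}}^{(k)} = \bm{O}$ holds for every admissible factorization $\bm{A}_{\rm{s}}^{(k)} = \bm{U}_{\rm{s}}^{(k)}\bm{\Sigma}_{\rm{s}}^{(k)}\big(\bm{V}_{\rm{s}}^{(k)}\big)^*$ of the optimal standard part --- both of which are immediate once the $k$-th and $(k+1)$-th singular values of $\bm{A}_{\rm{s}}$ are distinct, the setting implicit in the phrase ``the best rank-$k$ approximation''. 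Everything else is a one-line manipulation of the dual Frobenius norm.
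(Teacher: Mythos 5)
Your proposal is correct and follows essentially the same route as the paper's own proof: square the dual Frobenius norm to get $\Vert\bm{E}_{\rm{s}}^{(k)}\Vert_F^2+2\langle\bm{E}_{\rm{s}}^{(k)},\bm{E}_{\rm{i}}^{(k)}\rangle\epsilon$, minimize the standard part via Eckart--Young, and then use the residual orthogonality $\big(\bm{U}_{\rm{s}}^{(k)}\big)^*\bm{E}_{\rm{s}}^{(k)}=\bm{O}$, $\bm{E}_{\rm{s}}^{(k)}\bm{V}_{\rm{s}}^{(k)}=\bm{O}$ to show the infinitesimal part collapses to the constant $2\langle\bm{E}_{\rm{s}}^{(k)},\bm{A}_{\rm{i}}\rangle$, independent of $\bm{U}_{\rm{i}}^{(k)},\bm{V}_{\rm{i}}^{(k)},\bm{\Sigma}_{\rm{i}}^{(k)}$. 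Your extra remarks (the $\bm{E}_{\rm{s}}=\bm{O}$ case and the tie-breaking caveat on singular values) are harmless additions to the same argument.
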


\begin{proof}
Let $\bm{E}_{\rm{s}}^{(k)}:=\bm{A}_{\rm{s}}-\bm{U}_{\rm{s}}^{(k)}\bm{\Sigma}_{\rm{s}}^{(k)}\big(\bm{V}_{\rm{s}}^{(k)}\big)^*$ and $\bm{E}_{\rm{i}}^{(k)}:= \bm{A}_{\rm{i}}-\bm{U}_{\rm{s}}^{(k)}\bm{\Sigma}_{\rm{s}}^{(k)}\big(\bm{V}_{\rm{i}}^{(k)}\big)^*-\bm{U}_{\rm{s}}^{(k)}\bm{\Sigma}_{\rm{i}}^{(k)} \big(\bm{V}_{\rm{s}}^{(k)}\big)^*-\bm{U}_{\rm{i}}^{(k)}\bm{\Sigma}_{\rm{s}}^{(k)}\big(\bm{V}_{\rm{s}}^{(k)}\big)^*$. Consider the metric induced by Frobenius norm
\begin{align}
d_F\big(\bm{A},\bm{U}_k\bm{\Sigma}_k \bm{V}_k^*\big) = \Vert \bm{A}-\bm{U}_k\bm{\Sigma}_k \bm{V}_k^*\Vert_{F}\;,\nonumber
\end{align}
then the objective function \cref{17} becomes
\begin{equation}
d_F^2\big( \bm{A},\bm{U}_k\bm{\Sigma}_k \bm{V}_k^*\big)
= \big\Vert \bm{E}_{\rm{s}}^{(k)}+\bm{E}_{\rm{i}}^{(k)}\epsilon \big\Vert^2_{F}
= \big\Vert \bm{E}_{\rm{s}}^{(k)}\big\Vert_{F}^2+2\big\langle \bm{E}_{\rm{s}}^{(k)},\bm{E}_{\rm{i}}^{(k)}\big\rangle\epsilon \label{18}\;.
\end{equation}

According to the total order ``$<$'' of dual numbers, we first minimize the standard part $\big\Vert\bm{E}_{\rm{s}}^{(k)}\big\Vert_{F}^2$. When $\bm{U}_{\rm{s}}^{(k)}\bm{\Sigma}_{\rm{s}}^{(k)}\big(\bm{V}_{\rm{s}}^{(k)}\big)^*$ is the best rank-$k$ approximation of complex matrix $\bm{A}_{\rm{s}}$, $\big\Vert\bm{E}_{\rm{s}}^{(k)}\big\Vert_{F}^2$ achieves the minimum value, at this time, we also have
\begin{align}
    \big(\bm{U}_{\rm{s}}^{(k)}\big)^*\bm{E}_{\rm{s}}^{(k)} =\bm{O}_{r\times n}\;,\;
    \bm{E}_{\rm{s}}^{(k)}\bm{V}_{\rm{s}}^{(k)}=\bm{O}_{m\times r}\;\nonumber.
\end{align}

Hence, the expression \cref{18} can be organized as
\begin{align*}
    &d_F^2\big( \bm{A},\bm{U}_k\bm{\Sigma}_k \bm{V}_k^*\big)\nonumber\\
    =& \big\Vert \bm{E}_{\rm{s}}^{(k)}\big\Vert_{F}^2+2\big\langle \bm{E}_{\rm{s}}^{(k)},\bm{A}_{\rm{i}}\big\rangle\epsilon-2\big\langle \big(\bm{U}_{\rm{s}}^{(k)}\big)^*\bm{E}_{\rm{s}}^{(k)},\bm{\Sigma}_{\rm{s}}^{(k)}\big(\bm{V}_{\rm{i}}^{(k)}\big)^*\big\rangle\epsilon\\
    &\quad\quad\quad\quad-2\big\langle \bm{E}_{\rm{s}}^{(k)}\bm{V}_{\rm{s}}^{(k)},\bm{U}_{\rm{s}}^{(k)}\bm{\Sigma}_{\rm{i}}^{(k)} \big\rangle\epsilon-2\big\langle \bm{E}_{\rm{s}}^{(k)}\bm{V}_{\rm{s}}^{(k)},\bm{U}_{\rm{i}}^{(k)}\bm{\Sigma}_{\rm{s}}^{(k)}\big\rangle\epsilon\nonumber\\
    =& \big\Vert \bm{E}_{\rm{s}}^{(k)}\big\Vert_F^2+2\big\langle \bm{E}_{\rm{s}}^{(k)}, \bm{A}_{\rm{i}}\big\rangle\epsilon\;\nonumber,
\end{align*}
which implies that $d_F^2( \bm{A},\bm{U}_k\bm{\Sigma}_k \bm{V}_k^*)$ is a fixed value for any $\bm{U}_{\rm{i}}^{(k)},\bm{V}_{\rm{i}}^{(k)},\bm{\Sigma}_{\rm{i}}^{(k)}$, as long as $\bm{U}_{\rm{s}}^{(k)}\bm{\Sigma}_{\rm{s}}^{(k)}\big(\bm{V}_{\rm{s}}^{(k)}\big)^*$ is the best rank-$k$ approximation of the complex matrix $\bm{A}_{\rm{s}}$. This completes the proof.
\end{proof}
%\begin{remark}
%Under Frobenius norm, the optimal rank-$k$ approximation of a dual complex matrix $\bm{A}$ is independent of their infinitesimal parts, on condition that their standard parts are the best rank-$k$ approximation of $\bm{A}_{\rm{s}}$.
%\end{remark}

\subsection{\texorpdfstring{Rank-$k$ Approximation Under the Quasi-Metric $d_*$}{Rank-k Approximation Under the Quasi-Metric d*}}
In order to interpret the role of the infinitesimal part in low-rank approximation, we define a quasi-metric to measure the distance between two dual matrices. \begin{definition}\label{def3.2}
Let $\bm{A}=\bm{A}_{\rm{s}}+\bm{A}_{\rm{i}}\epsilon,\bm{B}=\bm{B}_{\rm{s}}+\bm{B}_{\rm{i}}\epsilon\in\mathbb{DC}^{m\times n}$. Define a function $d_*:\mathbb{DC}^{m\times n}\times\mathbb{DC}^{m\times n}\rightarrow\mathbb{D}$ satisfying
\begin{align}
d_*( \bm{A},\bm{B}) =\left\{\begin{array}{cc}
\Vert \bm{A}_{\rm{s}}-\bm{B}_{\rm{s}}\Vert_F+\frac{\Vert \bm{A}_{\rm{i}}-\bm{B}_{\rm{i}}\Vert_F^2}{ 2\Vert \bm{A}_{\rm{s}}-\bm{B}_{\rm{s}}\Vert_F}\epsilon\;, & if \; \bm{A}_{\rm{s}}\neq\bm{B}_{\rm{s}}\;,  \\
\Vert \bm{A}_{\rm{i}}-\bm{B}_{\rm{i}}\Vert_F\epsilon\;, & otherwise\;.
\end{array} \right.\label{quasi-metric}
\end{align}
\end{definition}

Actually, the above function $d_*$ is a quasi-metric, and the first two conditions in \Cref{def:quasi-metric} apparently hold. 
In addition, take the constant dual number $c=c_{\rm{s}}+c_{\rm{i}}\epsilon$ satisfying $c_{\rm{s}}>1$, then the third condition in \Cref{def:quasi-metric} is always true. 
Since the standard part of $d_*$ fulfills that $\Vert \bm{A}_{\rm{s}}-\bm{B}_{\rm{s}}\Vert_F\leq\Vert \bm{A}_{\rm{s}}-\bm{C}_{\rm{s}}\Vert_F+\Vert \bm{C}_{\rm{s}}-\bm{B}_{\rm{s}}\Vert_F$ for any dual complex matrices $\bm{A}, \bm{B}, \bm{C}$, we obtain $\Vert \bm{A}_{\rm{s}}-\bm{B}_{\rm{s}}\Vert_F<c_{\rm{s}}\big(\Vert \bm{A}_{\rm{s}}-\bm{C}_{\rm{s}}\Vert_F+\Vert \bm{C}_{\rm{s}}-\bm{B}_{\rm{s}}\Vert_F\big)$, which implies $d_*(\bm{A},\bm{B})< c\big(d_*(\bm{A},\bm{C})+d_*(\bm{C},\bm{B})\big)$ based on the total order of dual numbers.

Incidentally, $d_*$ is not a metric because it does not satisfy the triangle inequality. For example, let $\bm{A} = \left(\begin{smallmatrix}
        1 & 0 \\
            0 & 1 \\
    \end{smallmatrix}\right) + \left(\begin{smallmatrix}
        1 & 0 \\
            0 & 1 \\
\end{smallmatrix}\right)\epsilon$ , $\bm{B} = \left(\begin{smallmatrix}
        0 & 0 \\
            0 & 0 \\
    \end{smallmatrix}\right) + \left(\begin{smallmatrix}
        -1 & 0 \\
            0 & -1 \\  \end{smallmatrix}\right)\epsilon$ and $\bm{C} = \left(\begin{smallmatrix}
        0 & 0 \\
            0 & 0 \\
    \end{smallmatrix}\right) + \left(\begin{smallmatrix}
        0 & 0 \\
            0 & 0 \\  \end{smallmatrix}\right)\epsilon$. Then
            \begin{align}
                d_*(\bm{A},\bm{B}) &= \Vert \left(\begin{smallmatrix}
        1 & 0 \\
            0 & 1 \\
\end{smallmatrix}\right)\Vert_F+\frac{\Vert \left(\begin{smallmatrix}
        2 & 0 \\
            0 & 2 \\
\end{smallmatrix}\right)\Vert_F^2}{2\Vert \left(\begin{smallmatrix}
        1 & 0 \\
            0 & 1 \\
\end{smallmatrix}\right)\Vert_F}\epsilon = \sqrt{2}+2\sqrt{2}\epsilon\;,\nonumber\\
d_*(\bm{A},\bm{C}) &= \Vert \left(\begin{smallmatrix}
        1 & 0 \\
            0 & 1 \\
\end{smallmatrix}\right)\Vert_F+\frac{\Vert \left(\begin{smallmatrix}
        1 & 0 \\
            0 & 1 \\
\end{smallmatrix}\right)\Vert_F^2}{2\Vert \left(\begin{smallmatrix}
        1 & 0 \\
            0 & 1 \\
\end{smallmatrix}\right)\Vert_F}\epsilon = \sqrt{2}+\frac{\sqrt{2}}{2}\epsilon\;,\nonumber\\
d_*(\bm{C},\bm{B}) &= \Vert \left(\begin{smallmatrix}
        1 & 0 \\
            0 & 1 \\
\end{smallmatrix}\right)\Vert_F\epsilon = \sqrt{2}\epsilon\;.\nonumber
            \end{align}
However, $d_*(\bm{A},\bm{B})>d_*(\bm{A},\bm{C})+d_*(\bm{C},\bm{B})$. 

Moreover, it is reasonable to use the quasi-metric $d_*$ in the optimization problem \cref{17}. 
Since according to the total order of dual numbers, minimizing the objective function is equivalent to first minimizing its standard part and then its infinitesimal part.
This process is similar to multi-objective optimization.

The following theorem states the optimal rank-$k$ approximation of a given dual complex matrix under the quasi-metric $d_*$, which reflects the impact of the infinitesimal part. 

\begin{theorem}
Let $\bm{A}=\bm{A}_{\rm{s}}+\bm{A}_{\rm{i}}\epsilon\in\mathbb{DC}^{m\times n} (m\geq n)$. Then under the quasi-metric $d_*$ \cref{quasi-metric}, the optimization problem \cref{17} has the optimal solution $\big(\widehat{\bm{U}}_k,\widehat{\bm{V}}_k,\widehat{\bm{\Sigma}}_k\big)$, where $\widehat{\bm{U}}_k = \widehat{\bm{U}}_{\rm{s}}^{(k)}+\widehat{\bm{U}}_{\rm{i}}^{(k)}\epsilon\in\mathbb{DC}^{m\times k}$, $\widehat{\bm{V}}_k = \widehat{\bm{V}}_{\rm{s}}^{(k)}+\widehat{\bm{V}}_{\rm{i}}^{(k)}\epsilon\in\mathbb{DC}^{n\times k}$ have unitary columns and $\widehat{\bm{\Sigma}}_k = \widehat{\bm{\Sigma}}_{\rm{s}}^{(k)}+\widehat{\bm{\Sigma}}_{\rm{i}}^{(k)}\epsilon\in\mathbb{DR}^{k\times k}$ is diagonal positive. Among them, the standard parts satisfy that $\widehat{\bm{U}}_{\rm{s}}^{(k)}\widehat{\bm{\Sigma}}_{\rm{s}}^{(k)}\big(\widehat{\bm{V}}_{\rm{s}}^{(k)}\big)^*$ is the best rank-$k$ approximation of $\bm{A}_{\rm{s}}$, where $\widehat{\bm{\Sigma}}_{\rm{s}}^{(k)}={\rm{diag}}(\hat{\sigma}_1\bm{I}_{\hat{r}_1},\hat{\sigma}_2\bm{I}_{\hat{r}_2},\cdots,\hat{\sigma}_\iota\bm{I}_{\hat{r}_\iota})$ is diagonal positive with $\hat{\sigma}_1>\hat{\sigma}_2>\cdots>\hat{\sigma}_\iota>0$. Moreover, the infinitesimal parts satisfy
\begin{subequations}
\begin{align}
\widehat{\bm{U}}_{\rm{i}}^{(k)}&=\widehat{\bm{U}}_{\rm{s}}^{(k)}\left[{\rm{sym}}\big(\bm{\widehat{R}}\widehat{\bm{\Sigma}}_{\rm{s}}^{(k)}\big)\odot\bm{\widehat{\Delta}}+\bm{\widehat{\Omega}}+\bm{\widehat{\Psi}}\right]+\mathcal{P}_{\big(\widehat{\bm{U}}_{\rm{s}}^{(k)}\big)^\perp} \bm{A}_{\rm{i}}\widehat{\bm{V}}_{\rm{s}}^{(k)}\big(\widehat{\bm{\Sigma}}_{\rm{s}}^{(k)}\big)^{-1}\;,\label{20a}\\
\widehat{\bm{V}}_{\rm{i}}^{(k)}&=\widehat{\bm{V}}_{\rm{s}}^{(k)}\left[{\rm{sym}}\big(\widehat{\bm{\Sigma}}_{\rm{s}}^{(k)}\bm{\widehat{R}}\big)\odot\bm{\widehat{\Delta}}+\bm{\widehat{\Omega}}\right]+\mathcal{P}_{\big(\widehat{\bm{V}}_{\rm{s}}^{(k)}\big)^\perp}  \bm{A}_{\rm{i}}^*\widehat{\bm{U}}_{\rm{s}}^{(k)}\big(\widehat{\bm{\Sigma}}_{\rm{s}}^{(k)}\big)^{-1}\;,\label{20b}\\
\widehat{\bm{\Sigma}}_{\rm{i}}^{(k)}&=\frac{1}{2}{\rm{Diag}}\Big[{\rm{sym}}\big(\bm{\widehat{R}}\big)\Big]\;,\label{20c}
\end{align}
\end{subequations}
where $\bm{\widehat{R}}:=\big(\widehat{\bm{U}}_{\rm{s}}^{(k)}\big)^*\bm{A}_{\rm{i}}\widehat{\bm{V}}_{\rm{s}}^{(k)}$ and $\bm{\widehat{R}}_{ii}$ represents the $(i,i)$-block matrix of $\bm{\widehat{R}}$ partitioned as in $\widehat{\bm{\Sigma}}_{\rm{s}}^{(k)}$, $\bm{\widehat{\Omega}}\in\mathcal{B}_{k\iota}^{\mathcal{S}}$ and 
\begin{align}
\bm{\widehat{\Delta}} = \begin{bmatrix}
    \bm{O}_{\hat{r}_1}  & \frac{\bm{1}_{\hat{r}_1\times \hat{r}_2}}{\hat{\sigma}_2^2-\hat{\sigma}_1^2} & \cdots & \frac{\bm{1}_{\hat{r}_1\times \hat{r}_\iota}}{\hat{\sigma}_\iota^2-\hat{\sigma}_1^2} \\
       \frac{\bm{1}_{\hat{r}_2\times \hat{r}_1}}{\hat{\sigma}_1^2-\hat{\sigma}_2^2}  & \bm{O}_{\hat{r}_2} & \cdots & \frac{\bm{1}_{\hat{r}_2\times \hat{r}_\iota}}{\hat{\sigma}_\iota^2-\hat{\sigma}_2^2} \\
       \vdots & \vdots & \ddots & \vdots \\
       \frac{\bm{1}_{\hat{r}_\iota\times \hat{r}_1}}{\hat{\sigma}_1^2-\hat{\sigma}_\iota^2} & \frac{\bm{1}_{\hat{r}_\iota\times \hat{r}_2}}{\hat{\sigma}_2^2-\hat{\sigma}_\iota^2} & \cdots & \bm{O}_{\hat{r}_\iota}
\end{bmatrix},
    \widehat{\bm{\Psi}} = \begin{bmatrix}
        \frac{\bm{\widehat{R}}_{11}-\bm{\widehat{R}}_{11}^*}{2\hat{\sigma}_1}    & & \\
          & \ddots & \\
          & & \frac{\bm{\widehat{R}}_{\iota\iota}-\bm{\widehat{R}}_{\iota\iota}^*}{2\hat{\sigma}_\iota}
    \end{bmatrix}.\label{29}
\end{align}
\end{theorem}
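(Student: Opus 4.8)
The plan rests on the observation that, since $\epsilon^2=0$, the squared quasi-metric separates cleanly: for $\bm{B}=\bm{U}_k\bm{\Sigma}_k\bm{V}_k^*$ with $\bm{A}_{\rm{s}}\neq\bm{B}_{\rm{s}}$ one gets $d_*^2(\bm{A},\bm{B})=\Vert\bm{A}_{\rm{s}}-\bm{B}_{\rm{s}}\Vert_F^2+\Vert\bm{A}_{\rm{i}}-\bm{B}_{\rm{i}}\Vert_F^2\epsilon$. Setting $\bm{E}_{\rm{s}}^{(k)}:=\bm{A}_{\rm{s}}-\bm{U}_{\rm{s}}^{(k)}\bm{\Sigma}_{\rm{s}}^{(k)}\big(\bm{V}_{\rm{s}}^{(k)}\big)^*$ and $\bm{E}_{\rm{i}}^{(k)}:=\bm{A}_{\rm{i}}-\bm{U}_{\rm{i}}^{(k)}\bm{\Sigma}_{\rm{s}}^{(k)}\big(\bm{V}_{\rm{s}}^{(k)}\big)^*-\bm{U}_{\rm{s}}^{(k)}\bm{\Sigma}_{\rm{i}}^{(k)}\big(\bm{V}_{\rm{s}}^{(k)}\big)^*-\bm{U}_{\rm{s}}^{(k)}\bm{\Sigma}_{\rm{s}}^{(k)}\big(\bm{V}_{\rm{i}}^{(k)}\big)^*$, I would, by the total order ``$<$'' on $\mathbb{D}$, solve the problem lexicographically: first minimize $\Vert\bm{E}_{\rm{s}}^{(k)}\Vert_F^2$, and then, over the feasible points attaining that minimum, minimize $\Vert\bm{E}_{\rm{i}}^{(k)}\Vert_F^2$. (If $k=\mathrm{rank}(\bm{A}_{\rm{s}})$, then $\bm{E}_{\rm{s}}^{(k)}=\bm{O}$ forces $d_*^2=0$ for every feasible point and the stated solution is trivially optimal, so I assume $k<\mathrm{rank}(\bm{A}_{\rm{s}})$.)

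For the first stage I would invoke the classical Eckart--Young theorem, which gives that $\Vert\bm{E}_{\rm{s}}^{(k)}\Vert_F^2$ is minimized precisely when $\bm{U}_{\rm{s}}^{(k)}\bm{\Sigma}_{\rm{s}}^{(k)}\big(\bm{V}_{\rm{s}}^{(k)}\big)^*$ is a best rank-$k$ approximation of $\bm{A}_{\rm{s}}$: $\widehat{\bm{U}}_{\rm{s}}^{(k)}$ spans a dominant $k$-dimensional left singular subspace, $\widehat{\bm{\Sigma}}_{\rm{s}}^{(k)}={\rm{diag}}(\hat{\sigma}_1\bm{I}_{\hat r_1},\cdots,\hat{\sigma}_\iota\bm{I}_{\hat r_\iota})$ collects the $k$ largest singular values, and $\widehat{\bm{V}}_{\rm{s}}^{(k)}=\bm{A}_{\rm{s}}^*\widehat{\bm{U}}_{\rm{s}}^{(k)}\big(\widehat{\bm{\Sigma}}_{\rm{s}}^{(k)}\big)^{-1}$. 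The point to keep track of is the residual gauge freedom: replacing $\big(\widehat{\bm{U}}_{\rm{s}}^{(k)},\widehat{\bm{V}}_{\rm{s}}^{(k)}\big)$ by $\big(\widehat{\bm{U}}_{\rm{s}}^{(k)}\bm{X},\widehat{\bm{V}}_{\rm{s}}^{(k)}\bm{X}\big)$ with $\bm{X}={\rm{diag}}(\bm{X}_{11},\cdots,\bm{X}_{\iota\iota})$ block-unitary commuting with $\widehat{\bm{\Sigma}}_{\rm{s}}^{(k)}$ leaves the best rank-$k$ approximation unchanged --- this is exactly the freedom used to pick the particular pair in \Cref{the3.1summ}.

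The heart of the argument is the second stage, where $\widehat{\bm{U}}_{\rm{s}}^{(k)},\widehat{\bm{V}}_{\rm{s}}^{(k)},\widehat{\bm{\Sigma}}_{\rm{s}}^{(k)}$ are fixed up to this gauge and, by \Cref{pro1.1}(v), the constraints become $\bm{P}:=\big(\widehat{\bm{U}}_{\rm{s}}^{(k)}\big)^*\widehat{\bm{U}}_{\rm{i}}^{(k)}$ and $\bm{Q}:=\big(\widehat{\bm{V}}_{\rm{s}}^{(k)}\big)^*\widehat{\bm{V}}_{\rm{i}}^{(k)}$ skew-Hermitian, and $\widehat{\bm{\Sigma}}_{\rm{i}}^{(k)}$ diagonal real. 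I would complete $\widehat{\bm{U}}_{\rm{s}}^{(k)},\widehat{\bm{V}}_{\rm{s}}^{(k)}$ to unitary bases as in \Cref{pro1.2}, write $\widehat{\bm{U}}_{\rm{i}}^{(k)}=\widehat{\bm{U}}_{\rm{s}}^{(k)}\bm{P}+\widetilde{\bm{U}}_{\rm{s}}^{(k)}\widetilde{\bm{P}}$ and $\widehat{\bm{V}}_{\rm{i}}^{(k)}=\widehat{\bm{V}}_{\rm{s}}^{(k)}\bm{Q}+\widetilde{\bm{V}}_{\rm{s}}^{(k)}\widetilde{\bm{Q}}$, and use unitary invariance of $\Vert\cdot\Vert_F$ to split $\Vert\bm{E}_{\rm{i}}^{(k)}\Vert_F^2$ into the sum of squared norms of the four blocks of $\big[\widehat{\bm{U}}_{\rm{s}}^{(k)}\ \widetilde{\bm{U}}_{\rm{s}}^{(k)}\big]^*\bm{E}_{\rm{i}}^{(k)}\big[\widehat{\bm{V}}_{\rm{s}}^{(k)}\ \widetilde{\bm{V}}_{\rm{s}}^{(k)}\big]$, whose defining parameters are disjoint. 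The $(2,2)$ block is $\big(\widetilde{\bm{U}}_{\rm{s}}^{(k)}\big)^*\bm{A}_{\rm{i}}\widetilde{\bm{V}}_{\rm{s}}^{(k)}$, untouched by every variable, hence an irreducible lower bound of norm $\big\Vert\mathcal{P}_{\big(\widehat{\bm{U}}_{\rm{s}}^{(k)}\big)^\perp}\bm{A}_{\rm{i}}\mathcal{P}_{\big(\widehat{\bm{V}}_{\rm{s}}^{(k)}\big)^\perp}\big\Vert_F$; the $(2,1)$ and $(1,2)$ blocks vanish for $\widetilde{\bm{P}}=\big(\widetilde{\bm{U}}_{\rm{s}}^{(k)}\big)^*\bm{A}_{\rm{i}}\widehat{\bm{V}}_{\rm{s}}^{(k)}\big(\widehat{\bm{\Sigma}}_{\rm{s}}^{(k)}\big)^{-1}$ and $\widetilde{\bm{Q}}^*=\big(\widehat{\bm{\Sigma}}_{\rm{s}}^{(k)}\big)^{-1}\big(\widehat{\bm{U}}_{\rm{s}}^{(k)}\big)^*\bm{A}_{\rm{i}}\widetilde{\bm{V}}_{\rm{s}}^{(k)}$ --- invertibility of $\widehat{\bm{\Sigma}}_{\rm{s}}^{(k)}$ being essential --- and these reassemble into the $\mathcal{P}_{\big(\cdot\big)^\perp}$ terms of \cref{20a,20b}; and for the $(1,1)$ block $\bm{\widehat{R}}-\bm{P}\widehat{\bm{\Sigma}}_{\rm{s}}^{(k)}-\widehat{\bm{\Sigma}}_{\rm{i}}^{(k)}-\widehat{\bm{\Sigma}}_{\rm{s}}^{(k)}\bm{Q}^*$, I would apply \Cref{lemma3.1} (taking $\bm{P},\bm{Q}$ as in \cref{7a,7b}) to annihilate every off-diagonal block, leaving residual $\bigoplus_t\big(\tfrac{\bm{\widehat{R}}_{tt}+\bm{\widehat{R}}_{tt}^*}{2}-\big(\widehat{\bm{\Sigma}}_{\rm{i}}^{(k)}\big)_{tt}\big)$, match the diagonal real part via $\big(\widehat{\bm{\Sigma}}_{\rm{i}}^{(k)}\big)_{tt}={\rm{Diag}}\big(\tfrac{\bm{\widehat{R}}_{tt}+\bm{\widehat{R}}_{tt}^*}{2}\big)$, and then spend the gauge freedom --- as in the proof of \Cref{the3.1summ} --- on the particular pair making each $\tfrac{\bm{\widehat{R}}_{tt}+\bm{\widehat{R}}_{tt}^*}{2}$ diagonal and real, which kills the remaining off-diagonal part and hence the whole $(1,1)$ block. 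Then $\Vert\bm{E}_{\rm{i}}^{(k)}\Vert_F^2$ equals the lower bound, $\widehat{\bm{\Sigma}}_{\rm{i}}^{(k)}=\tfrac12{\rm{Diag}}\big({\rm{sym}}(\bm{\widehat{R}})\big)$, and reassembling $\widehat{\bm{U}}_{\rm{i}}^{(k)},\widehat{\bm{V}}_{\rm{i}}^{(k)}$ as in \Cref{le2.2} with the $\bm{P},\bm{Q}$ of \cref{7a,7b} --- whose diagonal skew-Hermitian freedom is the $\bm{\widehat{\Omega}}\in\mathcal{B}_{k\iota}^{\mathcal{S}}$ --- reproduces \cref{20a,20b,20c} with $\bm{\widehat{\Delta}},\bm{\widehat{\Psi}}$ as in \cref{29}.

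The step I expect to be the main obstacle is this $(1,1)$-block analysis together with the non-uniqueness of the best rank-$k$ approximation: one has to confirm that the within-block rotation realizing the particular pair does not spoil first-stage optimality (it does not, since $\bm{X}$ commutes with $\widehat{\bm{\Sigma}}_{\rm{s}}^{(k)}$), and, when $\hat{\sigma}_\iota$ equals the $(k+1)$-th singular value and its singular subspace is split by the truncation, that the claimed optimum is attained for a split that additionally minimizes the $(2,2)$-block norm --- so the theorem should be read as asserting the existence of an optimal solution of the stated form, not its uniqueness. A secondary point requiring care is that the four-block splitting of $\Vert\bm{E}_{\rm{i}}^{(k)}\Vert_F^2$ is an exact sum with genuinely independent block minimizations, which holds because $\bm{P},\widetilde{\bm{P}},\bm{Q},\widetilde{\bm{Q}},\widehat{\bm{\Sigma}}_{\rm{i}}^{(k)}$ are free coordinates constrained only by skew-Hermiticity of $\bm{P},\bm{Q}$ and diagonality of $\widehat{\bm{\Sigma}}_{\rm{i}}^{(k)}$.
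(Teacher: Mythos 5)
Your proposal is correct and follows essentially the same route as the paper: lexicographic minimization under the total order, Eckart--Young for the standard part, then unitary completion and the four-block splitting of $\big\Vert \bm{E}_{\rm{i}}^{(k)}\big\Vert_F^2$, with \Cref{lemma3.1} and \Cref{le2.2} annihilating all but the irreducible $(2,2)$ block and the block-unitary gauge fixing producing the particular pair. Your explicit treatment of the degenerate case $\sigma_k=\sigma_{k+1}$ and of the independence of the block minimizations is a careful refinement of points the paper leaves implicit, but it does not change the argument.
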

\begin{proof}
Let $\bm{E}_{\rm{s}}^{(k)}:=\bm{A}_{\rm{s}}-\bm{U}_{\rm{s}}^{(k)}\bm{\Sigma}_{\rm{s}}^{(k)}\big(\bm{V}_{\rm{s}}^{(k)}\big)^*$ and $\bm{E}_{\rm{i}}^{(k)}:=\bm{A}_{\rm{i}}-\bm{U}_{\rm{s}}^{(k)}\bm{\Sigma}_{\rm{s}}^{(k)}\big(\bm{V}_{\rm{i}}^{(k)}\big)^*-\bm{U}_{\rm{s}}^{(k)}\bm{\Sigma}_{\rm{i}}^{(k)} \big(\bm{V}_{\rm{s}}^{(k)}\big)^*-\bm{U}_{\rm{i}}^{(k)}\bm{\Sigma}_{\rm{s}}^{(k)}\big(\bm{V}_{\rm{s}}^{(k)}\big)^*$. Then under the quasi-metric $d_*$ \cref{quasi-metric}, the objective function \cref{17} becomes
\begin{align}
d_*^2\big( \bm{A},\bm{U}_k\bm{\Sigma}_k \bm{V}_k^*\big)=   \big\Vert \bm{E}_{\rm{s}}^{(k)}\big\Vert_{F}^2+\big\Vert \bm{E}_{\rm{i}}^{(k)}\big\Vert_F^2\epsilon \label{30}\;.
\end{align}

In the light of the total order $``<"$ of dual numbers, we start with minimizing the standard part $\big\Vert \bm{E}_{\rm{s}}^{(k)}\big\Vert_{F}^2$. 
Assume that $\bm{A}_{\rm{s}} = \bm{M}_{\rm{s}}\bm{\Sigma}_{\rm{s}}\bm{N}_{\rm{s}}^*$ is a compact SVD of $\bm{A}_{\rm{s}}$. Let $\bm{\widehat{R}} = \bm{M}_{\rm{s}}^*\bm{A}_{\rm{i}}\bm{N}_{\rm{s}}$ and $\bm{\widehat{R}}_{ii}$ represents the $(i,i)$-block matrix of $\bm{\widehat{R}}$ partioned as in $\bm{\Sigma}_{\rm{s}}$. Compute the eigenvalue decomposition of $\frac{\bm{\widehat{R}}_{ii}+\bm{\widehat{R}}_{ii}^*}{2}$ such that $\frac{\bm{\widehat{R}}_{ii}+\bm{\widehat{R}}_{ii}^*}{2} = \bm{X}_{ii}\bm{\Lambda}_{ii}\bm{X}_{ii}^*$, where $\bm{X}_{ii}$ is unitary and $\bm{\Lambda}_{ii}$ is diagonal real. $\bm{X}$ is obtained by diagonally splicing all the $\bm{X}_{ii}$. Set $\bm{U}_{\rm{s}}=\bm{M}_{\rm{s}}\bm{X}$ and $\bm{V}_{\rm{s}}=\bm{N}_{\rm{s}}\bm{X}$, then $\bm{U}_{\rm{s}}\bm{\Sigma}_{\rm{s}}\bm{V}_{\rm{s}}^*$ is also a compact SVD of $\bm{A}_{\rm{s}}$. When $\widehat{\bm{U}}_{\rm{s}}^{(k)} = \bm{U}_{\rm{s}}(:,1:k)$, $\bm{\widehat{\Sigma}}_{\rm{s}}^{(k)} = \bm{\Sigma}_{\rm{s}}(1:k,1:k)$ and $\bm{\widehat{V}}_{\rm{s}}^{(k)} = \bm{V}_{\rm{s}}(:,1:k)$ are substituted for $\bm{U}_{\rm{s}}^{(k)}$, $\bm{\Sigma}_{\rm{s}}^{(k)}$ and $\bm{V}_{\rm{s}}^{(k)}$, $\big\Vert \bm{E}_{\rm{s}}^{(k)}\big\Vert_{F}^2$ achieves the minimum value.

We then minimize the infinitesimal part of \cref{30} by using the above optimal pair $(\widehat{\bm{U}}_{\rm{s}}^{(k)},\widehat{\bm{\Sigma}}_{\rm{s}}^{(k)},\widehat{\bm{V}}_{\rm{s}}^{(k)})$. On account of original constraint conditions, the initial optimization problem \cref{17} is equivalent to
\begin{align}
\begin{array}{cl}
\min\limits_{\bm{U}_{\rm{i}}^{(k)},\bm{V}_{\rm{i}}^{(k)},\bm{\Sigma}_{\rm{i}}^{(k)}} & \big\Vert \bm{E}_{\rm{i}}^{(k)}\big\Vert_F^2\;,    \\
s.t. & \big(\widehat{\bm{U}}_{\rm{s}}^{(k)}\big)^*\bm{U}_{\rm{i}}^{(k)}+\big(\bm{U}_{\rm{i}}^{(k)}\big)^*\widehat{\bm{U}}_{\rm{s}}^{(k)} = \bm{O}_k\;,\\
&\big(\widehat{\bm{V}}_{\rm{s}}^{(k)}\big)^*\bm{V}_{\rm{i}}^{(k)}+\big(\bm{V}_{\rm{i}}^{(k)}\big)^*\widehat{\bm{V}}_{\rm{s}}^{(k)} = \bm{O}_k\;,\\
& \big(\bm{\Sigma}_{\rm{i}}^{(k)}\big)_{ij}=0\; (\forall i\neq j)\;,\\
& \bm{U}_{\rm{i}}^{(k)}\in\mathbb{C}^{m\times k}\;,\;\bm{V}_{\rm{i}}^{(k)}\in\mathbb{C}^{n\times k}\;,\;\bm{\Sigma}_{\rm{i}}^{(k)}\in\mathbb{R}^{k\times k}\;.
\end{array}\label{30000}
\end{align}

Based on \Cref{pro1.2}, suppose that $\bigl[\widehat{\bm{U}}_{\rm{s}}^{(k)}\;\;\Breve{\bm{U}}_{\rm{s}}\bigr]\in\mathbb{C}^{m\times m}$ and $\bigl[\widehat{\bm{V}}_{\rm{s}}^{(k)}\;\;\Breve{\bm{V}}_{\rm{s}}\bigr]\in\mathbb{C}^{n\times n}$ are both unitary. 
Applying the unitary invariant property of the Frobenius norm, we obtain
\begin{align}
&\big\Vert \bm{E}_{\rm{i}}^{(k)}\big\Vert_F^2\nonumber\\
=&\left\Vert
    \bigg[\begin{array}{c}
       \big(\widehat{\bm{U}}_{\rm{s}}^{(k)}\big)^*    \\
       \Breve{\bm{U}}_{\rm{s}}^* 
    \end{array}\bigg]\bm{E}_{\rm{i}}^{(k)}\Big[\begin{array}{cc}
        \widehat{\bm{V}}_{\rm{s}}^{(k)} & \Breve{\bm{V}}_{\rm{s}} \\
    \end{array}\Big]\right\Vert_F^2\nonumber\\
    =&\left\Vert
    \bigg[\begin{array}{cc}
       \big(\widehat{\bm{U}}_{\rm{s}}^{(k)}\big)^*\bm{A}_{\rm{i}}\widehat{\bm{V}}_{\rm{s}}^{(k)}   & \big(\widehat{\bm{U}}_{\rm{s}}^{(k)}\big)^*\bm{A}_{\rm{i}}\Breve{\bm{V}}_{\rm{s}}\\
       \Breve{\bm{U}}_{\rm{s}}^*\bm{A}_{\rm{i}}\widehat{\bm{V}}_{\rm{s}}^{(k)} & \Breve{\bm{U}}_{\rm{s}}^*\bm{A}_{\rm{i}}\Breve{\bm{V}}_{\rm{s}} \\
    \end{array}\bigg]-\bigg[\begin{array}{cc}
       \bm{M}^{(k)}   &  \widehat{\bm{\Sigma}}_{\rm{s}}^{(k)}\big(\bm{V}_{\rm{i}}^{(k)}\big)^*\Breve{\bm{V}}_{\rm{s}}\\
       \Breve{\bm{U}}_{\rm{s}}^*\bm{U}_{\rm{i}}^{(k)}\widehat{\bm{\Sigma}}_{\rm{s}}^{(k)} & \bm{O}_{(m-k)\times (n-k)} \\
    \end{array}\bigg]\right\Vert_F^2\nonumber\\
    =& \left\Vert\Breve{\bm{U}}_{\rm{s}}^*\bm{A}_{\rm{i}}\Breve{\bm{V}}_{\rm{s}}\right\Vert_F^2 + \left\Vert\big(\widehat{\bm{U}}_{\rm{s}}^{(k)}\big)^*\bm{A}_{\rm{i}}\widehat{\bm{V}}_{\rm{s}}^{(k)}-\bm{M}^{(k)}\right\Vert_F^2+\left\Vert\big(\widehat{\bm{U}}_{\rm{s}}^{(k)}\big)^*\bm{A}_{\rm{i}}\Breve{\bm{V}}_{\rm{s}}-\widehat{\bm{\Sigma}}_{\rm{s}}^{(k)}\big(\bm{V}_{\rm{i}}^{(k)}\big)^*\Breve{\bm{V}}_{\rm{s}}\right\Vert_F^2\nonumber\\
    &\quad\quad\quad\quad\quad\;\;+\left\Vert\Breve{\bm{U}}_{\rm{s}}^*\bm{A}_{\rm{i}}\widehat{\bm{V}}_{\rm{s}}^{(k)}-\Breve{\bm{U}}_{\rm{s}}^*\bm{U}_{\rm{i}}^{(k)}\widehat{\bm{\Sigma}}_{\rm{s}}^{(k)}\right\Vert_F^2,\nonumber
\end{align}
where $\bm{M}^{(k)}:=\widehat{\bm{\Sigma}}_{\rm{s}}^{(k)}\big(\bm{V}_{\rm{i}}^{(k)}\big)^*\widehat{\bm{V}}_{\rm{s}}^{(k)}+\bm{\Sigma}_{\rm{i}}^{(k)}+\big(\widehat{\bm{U}}_{\rm{s}}^{(k)}\big)^*\bm{U}_{\rm{i}}^{(k)}\widehat{\bm{\Sigma}}_{\rm{s}}^{(k)}$.

When $\big(\widehat{\bm{U}}_{\rm{i}}^{(k)},\widehat{\bm{V}}_{\rm{i}}^{(k)},\widehat{\bm{\Sigma}}_{\rm{i}}^{(k)}\big)$ in \cref{20a,20b,20c} are substituted for $\big(\bm{U}_{\rm{i}}^{(k)},\bm{V}_{\rm{i}}^{(k)},\bm{\Sigma}_{\rm{i}}^{(k)}\big)$,
\Cref{lemma3.1} demonstrates that the second term of the above expression reaches the minimum value of zero since $\frac{\bm{R}_{ii}+\bm{R}_{ii}^*}{2}$ is diagonal real, where $\bm{R}=\bm{U}_{\rm{s}}\bm{\Sigma}_{\rm{s}}\bm{V}_{\rm{s}}^*$. Besides, the last two terms also attain their minimum values of zero from \Cref{le2.2}. Then the new optimization problem \cref{30000} achieves its minimum value $\big\Vert\Breve{\bm{U}}_{\rm{s}}^*\bm{A}_{\rm{i}}\Breve{\bm{V}}_{\rm{s}}\big\Vert_F^2$. Thus, we complete the proof.
\end{proof}

For a dual complex matrix failing the condition \cref{333333}, although it has neither CDSVD nor a definition of rank, its optimal rank-$k$ approximate dual complex matrix exists under our newly defined quasi-metric. 
On the other hand, a dual complex matrix $\bm{A}$ satisfying the condition \cref{333333} has both the CDSVD, $\bm{U\Sigma V}^*$,  and low-rank approximation under the quasi-metric $d_*$. 
However, unlike the classical SVD, the sum of the first $k$ terms of the CDSVD, $\bm{U}(:,1:k)\bm{\Sigma}(1:k,1:k)\bm{V}(:,1:k)^*$, is not the optimal rank-$k$ approximate dual complex matrix, $\widehat{\bm{U}}_k\widehat{\bm{\Sigma}}_k\widehat{\bm{V}}_k^*$. The latter uses only the first $k$ terms of the SVD of $\bm{A}_{\rm{s}}$ to calculate infinitesimal parts but the former uses all. 
Although only the particular pair $\big(\widehat{\bm{U}}^{(k)}_{\rm{s}},\widehat{\bm{V}}^{(k)}_{\rm{s}}\big)$ can ensure $\widehat{\bm{\Sigma}}^{(k)}_{\rm{i}}$ to be diagonal real and there are some degrees of freedom in $\widehat{\bm{U}}_{\rm{i}}^{(k)}$ and $\widehat{\bm{V}}_{\rm{i}}^{(k)}$, the optimal rank-$k$ approximate dual matrix is fixed for any pair, and
\begin{align}
\widehat{\bm{A}}_k=\widehat{\bm{U}}_k\widehat{\bm{\Sigma}}_k\widehat{\bm{V}}_k^*   =\widehat{\bm{U}}_{\rm{s}}^{(k)}\widehat{\bm{\Sigma}}_{\rm{s}}^{(k)}\big(\widehat{\bm{V}}_{\rm{s}}^{(k)}\big)^*+\Big(    \bm{A}_{\rm{i}} - \mathcal{P}_{\big(\widehat{\bm{U}}_{\rm{s}}^{(k)}\big)^{\perp}} \bm{A}_{\rm{i}}\mathcal{P}_{\big(\widehat{\bm{V}}_{\rm{s}}^{(k)}\big)^{\perp}}\Big)\epsilon\nonumber\;.
\end{align}

%Consider the best rank-$k$ approximation, $\widehat{\bm{U}}_k\widehat{\bm{\Sigma}}_k\widehat{\bm{V}}_k^*$, of a dual complex matrix $\bm{A}$ under F$^*$-norm, we use the first $k$ terms of SVD of $\bm{A}_{\rm{s}}$ to calculate the infinitesimal parts of $\widehat{\bm{U}}_k, \widehat{\bm{\Sigma}}_k, \widehat{\bm{V}}_k$, instead of taking the first $k$ terms of the CDSVD of $\bm{A}$ directly. Unlike CDSVD, which requires existence conditions, the low rank approximation of any dual matrix always exists. 

\Cref{alg:lowrank} shows how to find the optimal rank-$k$ approximation for a given dual complex matrix under the quasi-metric $d_*$ \cref{quasi-metric}.

\begin{algorithm}[ht]
\caption{The Optimal Rank-$k$ Approximation of $\bm{A}$ under the Quasi-Metric.}
\label{alg:lowrank}
\begin{algorithmic}
\REQUIRE{$\bm{A}=\bm{A}_{\rm{s}}+\bm{A}_{\rm{i}}\epsilon\in\mathbb{DC}^{m\times n}$ and the rank of approximation $k$\;$\big(k\leq {\rm{Rank}}(\bm{A}_{\rm{s}})\big)$.}
\STATE{\textbf{Step 1}. 
Decompose $\bm{A}_{\rm{s}}$ by thin SVD $\bm{A}_{\rm{s}} = \bm{U}_{\rm{s}}\bm{\Sigma}_{\rm{s}}\bm{V}_{\rm{s}}^*$, where $\bm{U}_{\rm{s}}\in\mathbb{C}^{m\times n}$ has unitary columns, $\bm{\Sigma}_{\rm{s}}\in\mathbb{R}^{m\times n}$ is rectangular diagonal and $\bm{V}_{\rm{s}}\in\mathbb{C}^{n\times n}$ is unitary.}
\STATE{\textbf{Step 2}.
    Set $\widehat{\bm{U}}_{\rm{s}}^{(k)} = \bm{U}_{\rm{s}}(:,1:k)$, $\widehat{\bm{\Sigma}}_{\rm{s}}^{(k)} = \bm{\Sigma}_{\rm{s}}(1:k,1:k)$ and $\widehat{\bm{V}}_{\rm{s}}^{(k)} = \bm{V}_{\rm{s}}(:,1:k)$. }
\STATE{\textbf{Step 3}.
   Compute $\bm{A}_{\rm{s}}^{(k)} = \widehat{\bm{U}}_{\rm{s}}^{(k)}\widehat{\bm{\Sigma}}_{\rm{s}}^{(k)}\big(\widehat{\bm{V}}_{\rm{s}}^{(k)}\big)^*$.}
\STATE{\textbf{Step 4}.
    Compute $\bm{A}_{\rm{i}}^{(k)}=\bm{A}_{\rm{i}} - \Big(\bm{I}_m- \widehat{\bm{U}}_{\rm{s}}^{(k)}\big(\widehat{\bm{U}}_{\rm{s}}^{(k)}\big)^*\Big) \bm{A}_{\rm{i}}\Big(\bm{I}_n- \widehat{\bm{V}}_{\rm{s}}^{(k)}\big(\widehat{\bm{V}}_{\rm{s}}^{(k)}\big)^*\Big)$.}
\ENSURE{The optimal rank-$k$ approximate dual matrix of $\bm{A}$, $\bm{A}_k = \bm{A}_{\rm{s}}^{(k)} +\bm{A}_{\rm{i}}^{(k)}\epsilon$.}
\end{algorithmic}
\end{algorithm}

\section{Dual Moore-Penrose Generalized Inverse}\label{sec:DMPGI}
In addition to the low rank approximation, the CDSVD has many other applications similar to the SVD of matrices. Here, we take the dual Moore-Penrose generalized inverse (DMPGI) for example.

The existence of various types of dual generalized inverses of dual matrices has been discussed for a long time, especially for the DMPGI \cite{angeles2012dual,qi2023moore,udwadia2021does,udwadia2020all}. Here, we use a more intuitive and convenient method -- starting from the CDSVD -- to obtain the necessary and sufficient conditions for the existence of the DMPGI and its expression.

\begin{definition}[\cite{pennestri2009linear}]\label{defi:DMPGI}
Let $\bm{A}\in\mathbb{DC}^{m\times n}$, if there exists an $n$-by-$m$ dual complex matrix $\bm{X}$ satisfying ${\rm{(i)}} \bm{XAX}=\bm{X}$; ${\rm{(ii)}} \bm{AXA}=\bm{A}$; ${\rm{(iii)}} (\bm{XA})^*=\bm{XA}$; ${\rm{(iv)}} (\bm{AX})^*=\bm{AX}$, then we call $\bm{X}$ the dual Moore-Penrose generalized inverse (DMPGI) of $\bm{A}$, and denote it by $\bm{A}^\dag$.
\end{definition}

In the following theorem, we aim to provide some equivalent conditions for the existence of the DMPGI.

\begin{theorem}
Let $\bm{A}=\bm{A}_{\rm{s}}+\bm{A}_{\rm{i}}\epsilon\in\mathbb{DC}^{m\times n} (m\geq n)$. Assume that $\bm{A}_{\rm{s}} = \bm{U}_{\rm{s}}\bm{\Sigma}_{\rm{s}} \bm{V}_{\rm{s}}^*$ is a compact SVD of $\bm{A}_{\rm{s}}$, where $\bm{U}_{\rm{s}}\in\mathbb{C}^{m\times r}$, $\bm{V}_{\rm{s}}\in\mathbb{C}^{n\times r}$ both have unitary columns, and $\bm{\Sigma}_{\rm{s}}\in\mathbb{R}^{r\times r}$ is diagonal positive. Then the following conditions are equivalent:
\begin{enumerate}
    \item[${\rm{(i)}}$] $(\bm{I}_m-\bm{U}_{\rm{s}}\bm{U}_{\rm{s}}^*)\bm{A}_{\rm{i}}(\bm{I}_n-\bm{V}_{\rm{s}}\bm{V}_{\rm{s}}^*)=\bm{O}_{m\times n}$;
    \item[${\rm{(ii)}}$] The CDSVD of $\bm{A}$ exists;
    \item[${\rm{(iii)}}$] The DMPGI $\bm{A}^{\dag}$ of $\bm{A}$ exists, and 
    \begin{align}
        \bm{A}^{\dag} =  \bm{V}_{\rm{s}}\bm{\Sigma}_{\rm{s}}^{-1}\bm{U}_{\rm{s}}^*&+\Big[(\bm{I}_n-\bm{V}_{\rm{s}}\bm{V}_{\rm{s}}^*)\bm{A}_{\rm{i}}^*\bm{U}_{\rm{s}}\bm{\Sigma}_{\rm{s}}^{-2}\bm{U}_{\rm{s}}^*+\bm{V}_{\rm{s}}\bm{\Sigma}_{\rm{s}}^{-2}\bm{V}_{\rm{s}}^*\bm{A}_{\rm{i}}^*(\bm{I}_m-\bm{U}_{\rm{s}}\bm{U}_{\rm{s}}^*)\nonumber \\
        &-\bm{V}_{\rm{s}}\bm{\Sigma}_{\rm{s}}^{-1}\bm{U}_{\rm{s}}^*\bm{A}_{\rm{i}}\bm{V}_{\rm{s}}\bm{\Sigma}_{\rm{s}}^{-1}\bm{U}_{\rm{s}}^*\Big]\epsilon\;.\label{33}
    \end{align}
\end{enumerate}
\end{theorem}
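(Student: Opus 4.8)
The plan is to establish the cycle $(\mathrm{i})\Leftrightarrow(\mathrm{ii})\Rightarrow(\mathrm{iii})\Rightarrow(\mathrm{i})$. The first equivalence $(\mathrm{i})\Leftrightarrow(\mathrm{ii})$ is precisely \Cref{the3.1summ}, so no work is needed there, and it remains to prove $(\mathrm{ii})\Rightarrow(\mathrm{iii})$ and $(\mathrm{iii})\Rightarrow(\mathrm{i})$.

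For $(\mathrm{ii})\Rightarrow(\mathrm{iii})$ I would start from a CDSVD $\bm{A}=\bm{U}\bm{\Sigma}\bm{V}^*$ and set $\bm{X}:=\bm{V}\bm{\Sigma}^{-1}\bm{U}^*$, which is well defined since $\bm{\Sigma}$ is diagonal positive hence invertible (\Cref{pro1.1}). Using $\bm{U}^*\bm{U}=\bm{V}^*\bm{V}=\bm{I}_r$ one gets $\bm{A}\bm{X}=\bm{U}\bm{U}^*$ and $\bm{X}\bm{A}=\bm{V}\bm{V}^*$, both Hermitian, while $\bm{A}\bm{X}\bm{A}=\bm{A}$ and $\bm{X}\bm{A}\bm{X}=\bm{X}$ follow in one line each; thus all four conditions of \Cref{defi:DMPGI} hold and $\bm{A}^\dag=\bm{X}$ exists. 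To recover the closed form \cref{33}, expand $\bm{\Sigma}^{-1}=\bm{\Sigma}_{\rm{s}}^{-1}-\bm{\Sigma}_{\rm{s}}^{-1}\bm{\Sigma}_{\rm{i}}\bm{\Sigma}_{\rm{s}}^{-1}\epsilon$ and multiply out $\bm{V}\bm{\Sigma}^{-1}\bm{U}^*$, obtaining standard part $\bm{V}_{\rm{s}}\bm{\Sigma}_{\rm{s}}^{-1}\bm{U}_{\rm{s}}^*$ and infinitesimal part $\bm{V}_{\rm{i}}\bm{\Sigma}_{\rm{s}}^{-1}\bm{U}_{\rm{s}}^*-\bm{V}_{\rm{s}}\bm{\Sigma}_{\rm{s}}^{-1}\bm{\Sigma}_{\rm{i}}\bm{\Sigma}_{\rm{s}}^{-1}\bm{U}_{\rm{s}}^*+\bm{V}_{\rm{s}}\bm{\Sigma}_{\rm{s}}^{-1}\bm{U}_{\rm{i}}^*$. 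Substituting $\bm{U}_{\rm{i}},\bm{V}_{\rm{i}},\bm{\Sigma}_{\rm{i}}$ from \cref{11a,11b,11c} of \Cref{le2.2}, with $\bm{P}=\bm{U}_{\rm{s}}^*\bm{U}_{\rm{i}}$ and $\bm{Q}=\bm{V}_{\rm{s}}^*\bm{V}_{\rm{i}}$ skew-Hermitian by \Cref{pro1.1}(v), the two pieces supported on the orthogonal complements reproduce exactly the first and second bracketed terms of \cref{33}; the three pieces supported on $\Range(\bm{U}_{\rm{s}})$ and $\Range(\bm{V}_{\rm{s}})$ collapse to $\bm{V}_{\rm{s}}\big(\bm{Q}\bm{\Sigma}_{\rm{s}}^{-1}-\bm{\Sigma}_{\rm{s}}^{-1}\bm{\Sigma}_{\rm{i}}\bm{\Sigma}_{\rm{s}}^{-1}+\bm{\Sigma}_{\rm{s}}^{-1}\bm{P}^*\big)\bm{U}_{\rm{s}}^*$, and the identity $\bm{\Sigma}_{\rm{s}}\bm{Q}-\bm{\Sigma}_{\rm{i}}+\bm{P}^*\bm{\Sigma}_{\rm{s}}=-\bm{U}_{\rm{s}}^*\bm{A}_{\rm{i}}\bm{V}_{\rm{s}}$ (which drops out of \cref{11c} after using $\bm{P}^*=-\bm{P}$, $\bm{Q}^*=-\bm{Q}$) turns this into the third term $-\bm{V}_{\rm{s}}\bm{\Sigma}_{\rm{s}}^{-1}\bm{U}_{\rm{s}}^*\bm{A}_{\rm{i}}\bm{V}_{\rm{s}}\bm{\Sigma}_{\rm{s}}^{-1}\bm{U}_{\rm{s}}^*$ of \cref{33}. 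Finally I would note that every term in \cref{33} is built only from $\bm{U}_{\rm{s}}\bm{U}_{\rm{s}}^*$, $\bm{V}_{\rm{s}}\bm{V}_{\rm{s}}^*$ and $\bm{V}_{\rm{s}}\bm{\Sigma}_{\rm{s}}^{-1}\bm{U}_{\rm{s}}^*$ and is therefore independent of the chosen compact SVD pair, so deriving it with the particular pair of \Cref{the3.1summ} proves it for any compact SVD of $\bm{A}_{\rm{s}}$.

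For $(\mathrm{iii})\Rightarrow(\mathrm{i})$, write $\bm{A}^\dag=\bm{X}_{\rm{s}}+\bm{X}_{\rm{i}}\epsilon$. Reading the standard parts of the four identities in \Cref{defi:DMPGI} shows that $(\bm{A}_{\rm{s}},\bm{X}_{\rm{s}})$ satisfies the classical Moore-Penrose equations, so $\bm{X}_{\rm{s}}=\bm{A}_{\rm{s}}^\dag$, whence $\bm{A}_{\rm{s}}\bm{X}_{\rm{s}}=\bm{U}_{\rm{s}}\bm{U}_{\rm{s}}^*$ and $\bm{X}_{\rm{s}}\bm{A}_{\rm{s}}=\bm{V}_{\rm{s}}\bm{V}_{\rm{s}}^*$. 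Taking the infinitesimal part of $\bm{A}\bm{X}\bm{A}=\bm{A}$ yields $\bm{A}_{\rm{i}}\bm{X}_{\rm{s}}\bm{A}_{\rm{s}}+\bm{A}_{\rm{s}}\bm{X}_{\rm{i}}\bm{A}_{\rm{s}}+\bm{A}_{\rm{s}}\bm{X}_{\rm{s}}\bm{A}_{\rm{i}}=\bm{A}_{\rm{i}}$; pre-multiplying by $\mathcal{P}_{\bm{U}_{\rm{s}}^\perp}$ and post-multiplying by $\mathcal{P}_{\bm{V}_{\rm{s}}^\perp}$ annihilates all three terms on the left, because $\mathcal{P}_{\bm{U}_{\rm{s}}^\perp}\bm{A}_{\rm{s}}=\bm{O}$, $\bm{A}_{\rm{s}}\mathcal{P}_{\bm{V}_{\rm{s}}^\perp}=\bm{O}$, and $\mathcal{P}_{\bm{U}_{\rm{s}}^\perp}(\bm{A}_{\rm{s}}\bm{X}_{\rm{s}})=\bm{O}$, $(\bm{X}_{\rm{s}}\bm{A}_{\rm{s}})\mathcal{P}_{\bm{V}_{\rm{s}}^\perp}=\bm{O}$, leaving $\mathcal{P}_{\bm{U}_{\rm{s}}^\perp}\bm{A}_{\rm{i}}\mathcal{P}_{\bm{V}_{\rm{s}}^\perp}=\bm{O}_{m\times n}$, which is condition (i). This closes the cycle.

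The routine steps (the four Moore-Penrose identities in the easy direction, the standard-part reading in the last direction) are immediate; the one genuinely technical part is the closed-form computation in $(\mathrm{ii})\Rightarrow(\mathrm{iii})$, where one must carefully track which blocks of $\bm{U}_{\rm{i}}$ and $\bm{V}_{\rm{i}}$ lie in $\Range(\bm{U}_{\rm{s}})$ respectively $\Range(\bm{V}_{\rm{s}})$, exploit the skew-Hermitian identities for $\bm{P}$ and $\bm{Q}$, and verify the cancellation $\bm{\Sigma}_{\rm{s}}\bm{Q}-\bm{\Sigma}_{\rm{i}}+\bm{P}^*\bm{\Sigma}_{\rm{s}}=-\bm{U}_{\rm{s}}^*\bm{A}_{\rm{i}}\bm{V}_{\rm{s}}$.
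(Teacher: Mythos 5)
Your proposal is correct and follows essentially the same route as the paper: both define $\bm{A}^\dag=\bm{V}\bm{\Sigma}^{-1}\bm{U}^*$, expand $\bm{\Sigma}^{-1}=\bm{\Sigma}_{\rm{s}}^{-1}-\bm{\Sigma}_{\rm{s}}^{-1}\bm{\Sigma}_{\rm{i}}\bm{\Sigma}_{\rm{s}}^{-1}\epsilon$, substitute the expressions of $\bm{U}_{\rm{i}},\bm{V}_{\rm{i}},\bm{\Sigma}_{\rm{i}}$ from \Cref{le2.2}, and use the skew-Hermitian $\bm{P},\bm{Q}$ to reduce the range-supported pieces to the third term of \cref{33}, with your pair-independence remark (everything in \cref{33} is built from $\bm{A}_{\rm{s}}^\dag$ and its adjoint) playing the role of the paper's explicit block-diagonal unitary $\bm{X}$. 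Your (iii)$\Rightarrow$(i) step is a mild variant — you identify the standard part of $\bm{A}^\dag$ with $\bm{A}_{\rm{s}}^\dag$ and project the infinitesimal part of $\bm{A}\bm{A}^\dag\bm{A}=\bm{A}$, needing only existence rather than the explicit formula \cref{33} that the paper plugs in — but the substance and conclusion are the same.
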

\begin{proof}
The equivalence of (i) and (ii) comes from \Cref{the3.1summ}.

(ii)$\Rightarrow$(iii): Suppose that the CDSVD of $\bm{A}$ exists and there is a particular pair $(\bm{U}_{\rm{s}},\bm{V}_{\rm{s}})$ such that $\bm{A} = \bm{U\Sigma V}^* = \bm{U}_{\rm{s}}\bm{\Sigma}_{\rm{s}}\bm{V}_{\rm{s}}^*+(\bm{U}_{\rm{i}}\bm{\Sigma}_{\rm{s}} \bm{V}_{\rm{s}}^*+\bm{U}_{\rm{s}}\bm{\Sigma}_{\rm{i}} \bm{V}_{\rm{s}}^*+\bm{U}_{\rm{s}}\bm{\Sigma}_{\rm{s}} \bm{V}_{\rm{i}}^*)\epsilon$ is a CDSVD of $\bm{A}$. 
Then the DMPGI of $\bm{A}$ exists and has the form $\bm{A}^{\dag} = \bm{V\Sigma^{-1} U}^*$, which exactly satisfies \Cref{defi:DMPGI}.
By applying \Cref{pro1.1} $({\rm{iv}})$, we get $\bm{\Sigma}^{-1} = \bm{\Sigma}_{\rm{s}}^{-1} -\bm{\Sigma}_{\rm{s}}^{-1}\bm{\Sigma}_{\rm{i}}\bm{\Sigma}_{\rm{s}}^{-1}\epsilon $. Then it yields that
\begin{align}
    \bm{A}^{\dag} &= \bm{V\Sigma}^{-1}\bm{U}^*\nonumber\\
    &= (\bm{V}_{\rm{s}}+\bm{V}_{\rm{i}}\epsilon)\left(\bm{\Sigma}_{\rm{s}}^{-1} -\bm{\Sigma}_{\rm{s}}^{-1}\bm{\Sigma}_{\rm{i}}\bm{\Sigma}_{\rm{s}}^{-1}\epsilon\right)(\bm{U}_{\rm{s}}^*+\bm{U}_{\rm{i}}^*\epsilon)\nonumber\\
    &= \bm{V}_{\rm{s}}\bm{\Sigma}_{\rm{s}}^{-1}\bm{U}_{\rm{s}}^*+\left(\bm{V}_{\rm{i}}\bm{\Sigma}_{\rm{s}}^{-1}\bm{U}_{\rm{s}}^*+\bm{V}_{\rm{s}}\bm{\Sigma}_{\rm{s}}^{-1}\bm{U}_{\rm{i}}^*-\bm{V}_{\rm{s}}\bm{\Sigma}_{\rm{s}}^{-1}\bm{\Sigma}_{\rm{i}}\bm{\Sigma}_{\rm{s}}^{-1}\bm{U}_{\rm{s}}^*\right)\epsilon\;.\label{34}\nonumber
\end{align}
Noting the expressions of $\bm{U}_{\rm{i}}, \bm{V}_{\rm{i}}, \bm{\Sigma}_{\rm{i}}$, given in \Cref{le2.2} \cref{11a,11b,11c}, we obtain
\begin{subequations}
\begin{align*}
\bm{V}_{\rm{i}}\bm{\Sigma}_{\rm{s}}^{-1}\bm{U}_{\rm{s}}^* &= \bm{V}_{\rm{s}}\bm{Q}\bm{\Sigma}_{\rm{s}}^{-1}\bm{U}_{\rm{s}}^*+(\bm{I}_n-\bm{V}_{\rm{s}}\bm{V}_{\rm{s}}^*)\bm{A}_{\rm{i}}^*\bm{U}_{\rm{s}}\bm{\Sigma}_{\rm{s}}^{-2}\bm{U}_{\rm{s}}^*\label{35a}\;,\\
\bm{V}_{\rm{s}}\bm{\Sigma}_{\rm{s}}^{-1}\bm{U}_{\rm{i}}^* &= \bm{V}_{\rm{s}}\bm{\Sigma}_{\rm{s}}^{-1}\bm{P}^*\bm{U}_{\rm{s}}^* + \bm{V}_{\rm{s}}\bm{\Sigma}_{\rm{s}}^{-2}\bm{V}_{\rm{s}}^*\bm{A}_{\rm{i}}^*(\bm{I}_m-\bm{U}_{\rm{s}}\bm{U}_{\rm{s}}^*)\;,\\
\bm{V}_{\rm{s}}\bm{\Sigma}_{\rm{s}}^{-1}\bm{\Sigma}_{\rm{i}}\bm{\Sigma}_{\rm{s}}^{-1}\bm{U}_{\rm{s}}^* &= \bm{V}_{\rm{s}}\bm{\Sigma}_{\rm{s}}^{-1}\bm{U}_{\rm{s}}^*\bm{A}_{\rm{i}}\bm{V}_{\rm{s}}\bm{\Sigma}_{\rm{s}}^{-1}\bm{U}_{\rm{s}}^* - \bm{V}_{\rm{s}}\bm{\Sigma}_{\rm{s}}^{-1} \bm{P}\bm{U}_{\rm{s}}^*-\bm{V}_{\rm{s}}\bm{Q}^*\bm{\Sigma}_{\rm{s}}^{-1}\bm{U}_{\rm{s}}^*\;.
\end{align*}
\end{subequations}
Besides, 
\begin{align}
&\bm{V}_{\rm{s}}\bm{Q}\bm{\Sigma}_{\rm{s}}^{-1}\bm{U}_{\rm{s}}^*+\bm{V}_{\rm{s}}\bm{\Sigma}_{\rm{s}}^{-1}\bm{P}^*\bm{U}_{\rm{s}}^*+\bm{V}_{\rm{s}}\bm{\Sigma}_{\rm{s}}^{-1} \bm{P}\bm{U}_{\rm{s}}^*+ \bm{V}_{\rm{s}}\bm{Q}^*\bm{\Sigma}_{\rm{s}}^{-1}\bm{U}_{\rm{s}}^*\nonumber\\
    =&
    \bm{V}_{\rm{s}}\Big[ (\bm{Q}+\bm{Q}^*)\bm{\Sigma}_{\rm{s}}^{-1}+\bm{\Sigma}_{\rm{s}}^{-1}(\bm{P}^*+\bm{P}) \Big]\bm{U}_{\rm{s}}^*
    =\bm{O}_{n\times m}\;,\nonumber
\end{align}
where the skew-Hermitian property of $\bm{P}$ and $\bm{Q}$ in \Cref{lemma3.1} is applied. 
Thus, $\bm{A}^\dag$ has the expression in \cref{33} by using this particular pair $(\bm{U}_{\rm{s}},\bm{V}_{\rm{s}})$.

Next, we aim to prove that \cref{33} holds for any pair. Assume that $\bm{A}_{\rm{s}} = \bm{M}_{\rm{s}}\bm{\Sigma}_{\rm{s}}\bm{N}_{\rm{s}}^*$ is another compact SVD of $\bm{A}_{\rm{s}}$. 
Then from the proof of \Cref{the3.1summ}, there exists a block diagonal unitary matrix $\bm{X}$ partitioned as in $\bm{\Sigma}_{\rm{s}}$ such that $\bm{U}_{\rm{s}}=\bm{M}_{\rm{s}}\bm{X}$ and $\bm{V}_{\rm{s}}=\bm{N}_{\rm{s}}\bm{X}$. 
Resulting from the fact that $\bm{X}\bm{\Sigma}_{\rm{s}}^{-1}\bm{X}^* = \bm{\Sigma}_{\rm{s}}^{-1}$ and $\bm{X}\bm{\Sigma}_{\rm{s}}^{-2}\bm{X}^* = \bm{\Sigma}_{\rm{s}}^{-2}$, \cref{33} is satisfied for the pair $(\bm{M}_{\rm{s}},\bm{N}_{\rm{s}})$.

Hence, the condition (iii) holds and the expression of $\bm{A}^{\dag}$ is obtained.

(iii)$\Rightarrow$(i): Suppose that the DMPGI of $\bm{A}$ exists and $\bm{A}^\dag$ has the form expressed in \cref{33}.
Then we have $\bm{A}\bm{A}^\dag\bm{A}=\bm{A}$ by the second property of the DMPGI. In other words, the infinitesimal parts of both sides are equal. Simplification yields $  \bm{A}_{\rm{i}}\bm{V}_{\rm{s}}\bm{V}_{\rm{s}}^*     +\bm{U}_{\rm{s}}\bm{U}_{\rm{s}}^*\bm{A}_{\rm{i}}      -\bm{U}_{\rm{s}}\bm{U}_{\rm{s}}^*\bm{A}_{\rm{i}}\bm{V}_{\rm{s}} \bm{V}_{\rm{s}}^*=\bm{A}_{\rm{i}} $. Thus, the condition (i) holds.

Consequently, we conclude that conditions (i), (ii) and (iii) are equivalent.
\end{proof}

Indeed, our proposed necessary and sufficient conditions for a dual complex matrix to have the DMPGI and its explicit expressions are precisely accordant with those developed by Wang \cite{wang2021characterizations}.

\section{Numerical Experiments}\label{sec:numerical}
In this section, numerical comparisons with some existing algorithms are reported and the role of infinitesimal parts is demonstrated in simulated time-series data by taking full advantage of the relationship between standard and infinitesimal parts. 
Moreover, traveling wave identification in a small-scale road monitoring video and large-scale brain fMRI data verifies the applicability of our proposed CDSVD. We wrote our codes in Matlab R2020a and conducted all experiments on a personal computer with Intel(R) Core(TM) i7-10510U CPU @ 1.80GHz 2.30 GHz and 16G memory.

\subsection{Comparisons of Algorithms for Dual SVD}

We start by comparing the CDSVD algorithm with other SVD algorithms of dual matrices, namely the dual SVD of dual real full column rank matrices (DSVD) \cite{pennestri2018moore} and the SVD of dual complex matrices (SVD-QACLL) \cite{qi2022low}.

Afterward, we study how well the SVD-QACLL, DSVD and our proposed CDSVD algorithm perform in simulations. Dual real matrices and dual complex matrices are generated by {\texttt{randn}}, which are always with full rank. Dual matrices of 8 different sizes are chosen to calculate the SVD and the running time is the evaluation index of algorithm performance. \Cref{table1} shows the average running time of SVD of dual real matrices and dual complex matrices with different sizes by using the three algorithms across the simulations repeated 50 times, and values in parentheses represent standard deviations. In addition, ``-{}-" stands for out of memory or taking over 1,000 seconds. The results indicate that our CDSVD \Cref{alg:CDSVD} compares favorably to the other two algorithms for dual real matrices, and is also distinctly better than the SVD-QACLL for dual complex matrices, especially for large sizes.
\begin{table}[htpb]
	\centering
	\caption{Comparisons of three algorithms for the SVD of dual matrices}
	\label{table1}
	\renewcommand{\arraystretch}{1.5}
	\resizebox{\textwidth}{!}{
	\begin{tabular}{|c|c|c|c|c|c|}
	\hline
	Matrix Types & \multicolumn{3}{c|}{Dual Real Matrices} & \multicolumn{2}{c|}{Dual Complex Matrices}\\
	\hline
	\diagbox{Size}{Time (s)}{Alg.}  & CDSVD & SVD-QACLL & DSVD & CDSVD & SVD-QACLL \\
	\hline
    $20\times 10$ & $0.0003\; (0.0001)$ & $0.0010\; (0.0004)$ & $0.0349\; ( 0.0096)$ & $0.0004\; ( 0.0002)$ & $0.0026\; ( 0.0107)$\\
    \hline
    $50\times 25$ & $0.0013\; ( 0.0003)$ & $0.0053\; ( 0.0011)$ & $20.9404\; ( 0.0879)$ & $0.0010\; ( 0.0003$ & $0.0038\; ( 0.0074)$\\
    \hline
    $100\times 50$ & $0.0011\; ( 0.0003)$ & $0.0043\; ( 0.0010)$ & -{}- & $0.0024\; ( 0.0010)$ & $0.0060\; ( 0.0012)$\\
    \hline
    $200\times100$ & $0.0029\; ( 0.0004)$ & $0.0115\; ( 0.0011)$ & -{}- & $0.0089\; ( 0.0038)$ & $0.0153\; ( 0.0016)$\\
    \hline
    $1000\times500$ & $0.1339\; ( 0.0082)$ & $0.7127\; ( 0.0461)$ & -{}- & $0.6342\; ( 0.0243)$ & $1.7926\; ( 0.0311)$\\
    \hline
    $2000\times1000$ & $0.8720\; ( 0.0490)$ & $4.8298\; ( 0.1242)$ & -{}- & $5.7022\; ( 0.3600)$ & $10.9719\; ( 0.3794)$\\
    \hline
    $5000\times2500$ & $18.5958\; ( 3.0477)$ & $96.5999\; ( 7.8840)$ & -{}- & $
  110.0527 \; (  11.1930)$ & $195.1239 \; (   16.6747)$\\
    \hline
    $10000\times5000$ & $157.9064\; ( 38.9427)$ & $  708.4614\; ( 37.9841)$ & -{}- & $866.9033 \; ( 50.8285)$ & -{}-\\
    \hline
	\end{tabular}}
\end{table}

\subsection{Comparison Between SVD and CDSVD}
Classical SVD decomposes a high-dimensional matrix into the sum of some uncorrelated rank-1 matrices. 
Each singular value reflects the 
weight of the corresponding rank-1 matrix in the original matrix.
When Gaussian noise is added to a matrix of initial rank $r$, the matrix becomes full rank, making it difficult to determine the true rank by SVD.
Here, we attempt to utilize the CDSVD to obtain more information and then explore the true rank of the original matrix from the noise-added matrix.

Consider a spatiotemporal propagation pattern \cite{feeny2008complex}
\begin{align}
    \bm{x}(t) = 2e^{\gamma t}\big[\cos{(\omega t)}\bm{c}-\sin{(\omega t)}\bm{d}\big]\;,\label{travelingwave}
\end{align}
where $\bm{x}(t)$ represents a real vector of particle positions at time $t$, $\gamma,\omega$ are real scalars representing the exponential decay rate and angular frequency, and $\bm{c},\bm{d}$ are real vectors representing two modes of oscillation. 
This reflects the circularity and continuity of propagation between $\bm{c}$ and $\bm{d}$.
In addition, \cref{travelingwave} behaves as a ``standing wave" of rank-1 when $\bm{c}=\bm{d}$ because only oscillation with up and down can be observed. It also behaves as a ``traveling wave" of rank-2 when $\bm{c}\neq\bm{d}$ due to its appearance of a wavy motion.

In the following simulation, we aim to study whether the infinitesimal part of the CDSVD provides extra information compared to the classical SVD.
Construct a spatiotemporal matrix with each row representing the pattern of a point in space over time. \Cref{fig:svd_cdsvd_comparison} illustrates a rank-6 initial matrix $\bm{A}$ consisting of four standing waves and one traveling wave, and the green circles represent the singular values of $\bm{A}$.
A new matrix $\widetilde{\bm{A}}$ emerges when Gaussian noise is added to the original matrix $\bm{A}$ and the signal-to-noise ratio (SNR) is 0.1604.
The red squares show the singular values of the noise-added matrix $\widetilde{\bm{A}}$.
Except for the first four obvious singular values, it is difficult to judge whether the remaining singular values are inherited from the original matrix or promoted by noise.
Thus, the SVD of the noise-added matrix $\widetilde{\bm{A}}$ can not recognize the true rank of the original matrix $\bm{A}$.
\begin{figure}[htpb]
    \centering \includegraphics[scale=0.48]{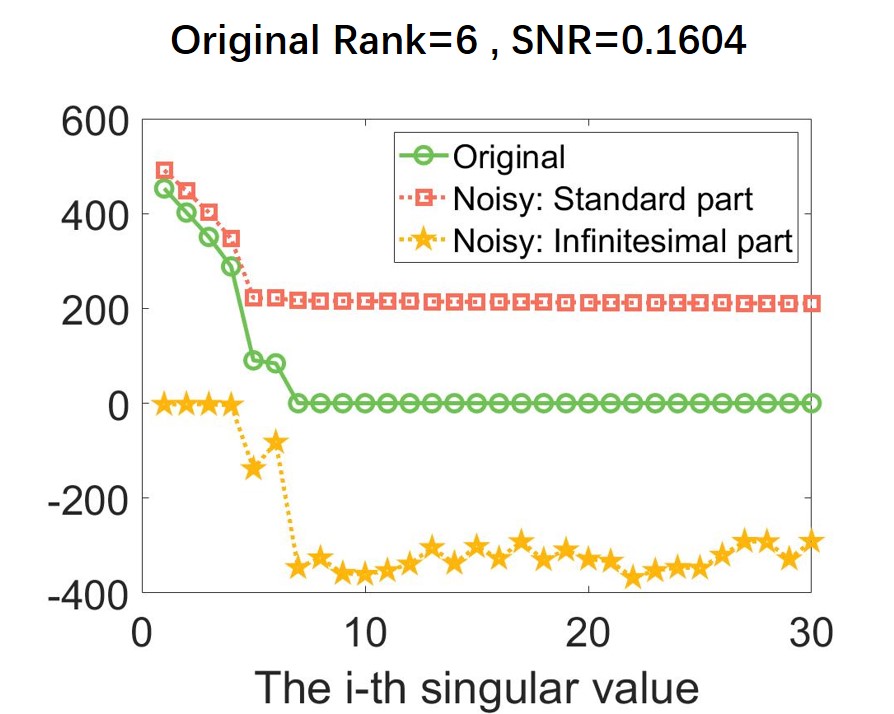}
    \caption{Comparison of singular values after the addition of Gaussian noise.}
    \label{fig:svd_cdsvd_comparison}
\end{figure}

In view of this, we create a dual matrix $\bm{X}$, whose standard part is $\widetilde{\bm{A}}$ and infinitesimal part is the first-order difference of $\widetilde{\bm{A}}$. 
The standard part of each singular value of the dual matrix $\bm{X}$, which is also a singular value of $\widetilde{\bm{A}}$, is once again shown by a red square.
Additionally, the yellow pentagrams refer to the infinitesimal part of singular values of the dual matrix $\bm{X}$.
It is distinctly noticed from yellow pentagrams that the first six singular values are significantly larger than others, which separates the original valid data from the added noise.
That is, we identify the true rank of the original matrix $\bm{A}$ from the infinitesimal part of singular values of the dual matrix $\bm{X}$ based on the CDSVD.

This is where the advantages of the CDSVD come into play, i.e., for a low-rank time-series matrix, it is hard to know the original rank after adding noise by using the classical SVD. However, if we construct a corresponding dual matrix, then its CDSVD provides an approach to identify the original rank by observing the infinitesimal part of singular values.

\subsection{Simulations of Standing and Traveling Waves}
The implication of the infinitesimal part in practice is a focus of our research.
Eduard Study \cite{study1923grenzfall} proposed a dual angle between two skew lines in three-dimensional space, whose standard part and infinitesimal part represent angle and distance respectively. 
Whereas, we regard original time-series data as the standard part and its derivative or first-order difference as the infinitesimal part, which naturally identifies standing and traveling waves in the following simulations.

With the spatiotemporal propagation pattern \cref{travelingwave}, we continue with further discussion of standing and traveling waves.

%Consider a linear combination of a complex mode with its complex conjugate \cite{feeny2008complex}
%\begin{align}
 %   \bm{x}(t) = 2e^{\gamma t}\big[\cos{(\omega t)}\bm{c}-\sin{(\omega t)}\bm{d}\big]\;,\label{53}
%\end{align}
%where $\bm{x}(t)$ represents a real vector of particle positions at time $t$, $\gamma,\omega$ are real scalars representing the exponential decay rate and angular frequency, $\bm{c},\bm{d}$ are real vectors representing two modes of oscillation. In a whole, \cref{53} shows the oscillatory propagation from $\bm{c}$ to $\bm{d}$. 
To gain insight into their properties, we begin with developing Gaussian waves on a two-dimensional grid constructed as a 50-by-100 array of pixels. 
When $m$ particles are arranged in rows and $n$ time points in columns, an $m$-by-$n$ ensemble matrix $\widehat{\bm{X}}$ is built, and $\bm{c}$, $\bm{d}$ in \cref{travelingwave} are taken as vectors obtained by straightening two-dimensional Gaussian curves. 
Then a dual matrix $\bm{X}$ is developed, whose standard part is $\widehat{\bm{X}}$ and infinitesimal part is the derivative of $\widehat{\bm{X}}$ respect to time. 
%If $\bm{c} = \bm{d}$, $\bm{x}$ is thought of as a ``standing wave" because only oscillation with up and down can be observed. At this time, there is only one non-zero singular value due to the rank-1 $\widehat{\bm{X}}$. 
When $\bm{c} = \bm{d}$, \cref{travelingwave} behaves as a standing wave.
\Cref{fig: simulation wave}a shows the standard and the infinitesimal parts of $\bm{U}$ after calculating the CDSVD of $\bm{X}$. 
The central position of the standing wave is quite clearly observed at (25,50) in $\bm{U}_{\rm{s}}$. 
%Conversely, if $\bm{c} \neq \bm{d}$, $\bm{x}$ is regarded as a ``traveling wave" for the reason that oscillation propagation from $\bm{c}$ to $\bm{d}$ can be observed. Additionally, there are two non-zero singular values resulting from the rank-2 $\widehat{\bm{X}}$. 
Conversely, if $\bm{c} \neq \bm{d}$, \cref{travelingwave}  is a traveling wave.
\Cref{fig: simulation wave}b presents the first two columns of $\bm{U}$, and the two central positions of the traveling wave are displayed at (25,20) and (25,80) in $\bm{U}_{\rm{s}}$. It is worth mentioning that the values of $\bm{U}_{\rm{i}}$ of traveling waves are relatively large compared to that of standing waves.

\begin{figure}[htpb]
    \centering
    \includegraphics[scale = 0.34]{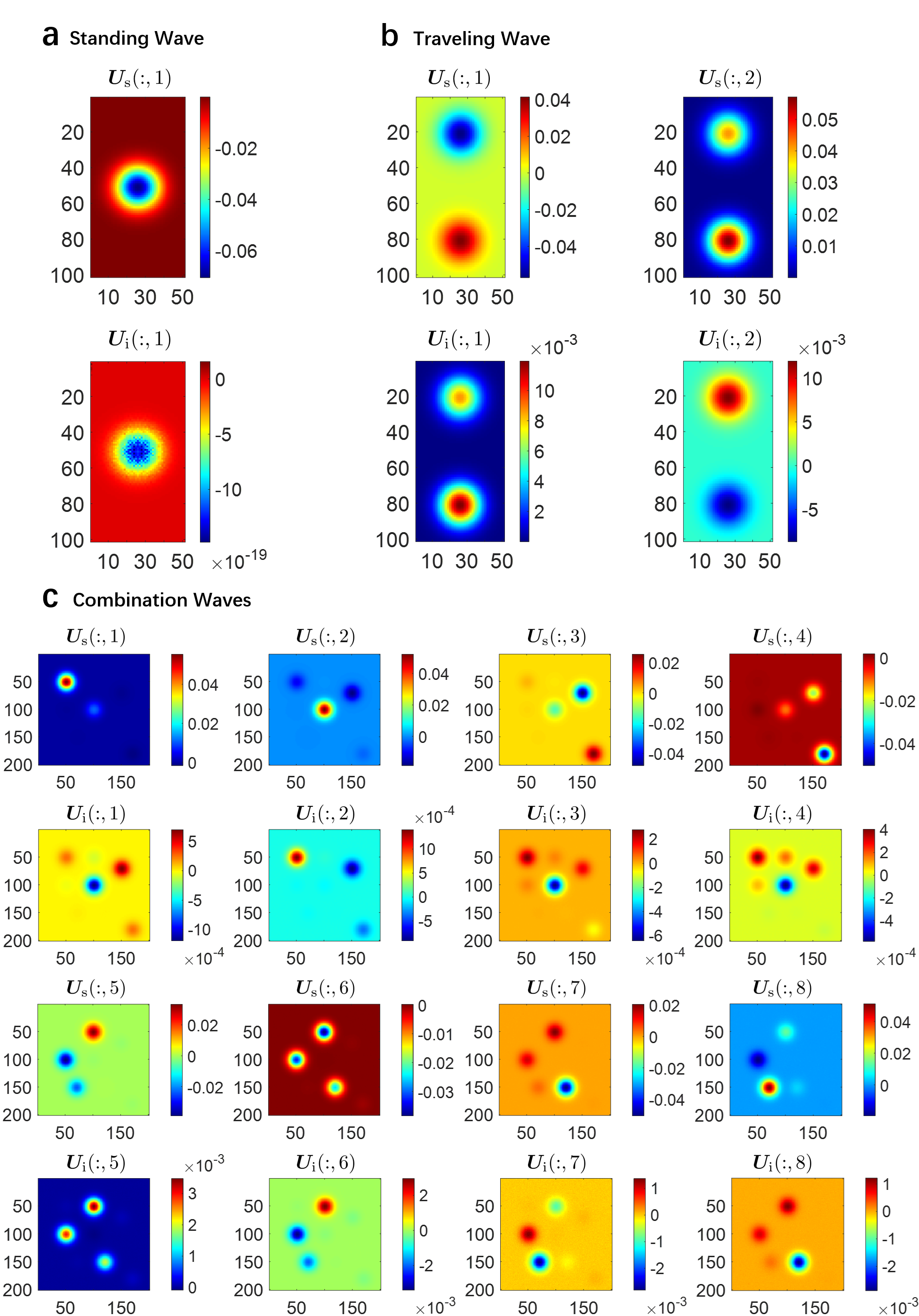}
    \caption{Simulation of a pure standing wave presents its central position in the standard part of $\bm{U}$ and small values in the infinitesimal part. Simulation of a pure traveling wave reveals the pairs of similarities between standard parts and infinitesimal parts of $\bm{U}$. The component waves are accurately identified in combination waves consisting of four standing waves and two traveling waves based on the CDSVD and special properties of traveling waves.}
    \label{fig: simulation wave}
\end{figure}

Surprisingly, we observe there exist pairs of similarities between $\bm{U}_{\rm{s}}$ and $\bm{U}_{\rm{i}}$ for the traveling waves. For instance, in \Cref{fig: simulation wave}b, $\bm{U}_{\rm{s}}(:,2)$ and $\bm{U}_{\rm{i}}(:,1)$ have two points located in same positions with a positive multiple, compared to $\bm{U}_{\rm{s}}(:,1)$ and $\bm{U}_{\rm{i}}(:,2)$ having two points with same positions but a negative multiple. 
It is worth considering whether traveling waves always have such properties as described above.

%Is this a special property for traveling waves? 
The following proposition substantiates that a rank-2 dual real matrix with special structures has pairs of similarities between its singular vectors, where one is homologous and the other is heterologous.

\begin{proposition}
\label{pro6.1}
Suppose $\bm{A} = \bm{A}_{\rm{s}}+\bm{A}_{\rm{i}}\epsilon\in\mathbb{DR}^{m\times n}$ is a rank-2 dual real matrix with column space ${\rm{Ran}}(\bm{A}_{\rm{i}})\subseteq {\rm{Ran}}(\bm{A}_{\rm{s}})$. Let $\bm{A} = \bm{U\Sigma V}^T$ be its CDSVD with $\bm{U} = \bm{U}_{\rm{s}}+\bm{U}_{\rm{i}}\epsilon\in\mathbb{DR}^{m\times 2}$. Then the columns of $\bm{U}$ satisfy
\begin{subequations}
    \begin{align}
        \bm{U}_{\rm{i}}(:,1) &= \alpha \bm{U}_{\rm{s}}(:,2)\;,\\
       \bm{U}_{\rm{i}}(:,2) &= \beta \bm{U}_{\rm{s}}(:,1)\;,\\
       \alpha\beta &< 0\;.
    \end{align}
\end{subequations}
\end{proposition}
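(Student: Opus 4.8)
The plan is to read the conclusion straight off the explicit formula for $\bm{U}_{\rm{i}}$ in \Cref{the2.1} (and, when the two singular values coincide, \Cref{the3.1summ}), observing that in the present situation almost every term collapses. Take a real compact SVD $\bm{A}_{\rm{s}}=\bm{U}_{\rm{s}}\bm{\Sigma}_{\rm{s}}\bm{V}_{\rm{s}}^T$ with $\bm{\Sigma}_{\rm{s}}={\rm{diag}}(\sigma_1,\sigma_2)$. The hypothesis ${\rm{Ran}}(\bm{A}_{\rm{i}})\subseteq{\rm{Ran}}(\bm{A}_{\rm{s}})={\rm{Ran}}(\bm{U}_{\rm{s}})$ gives $\bm{U}_{\rm{s}}\bm{U}_{\rm{s}}^*\bm{A}_{\rm{i}}=\bm{A}_{\rm{i}}$, hence $(\bm{I}_m-\bm{U}_{\rm{s}}\bm{U}_{\rm{s}}^*)\bm{A}_{\rm{i}}=\bm{O}$; in particular \cref{333333} holds (so the CDSVD exists) and the last summand of \cref{100a} vanishes. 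Moreover $\bm{R}:=\bm{U}_{\rm{s}}^T\bm{A}_{\rm{i}}\bm{V}_{\rm{s}}$ is real, so $\bm{R}-\bm{R}^*=\bm{R}-\bm{R}^T$ is skew-symmetric with zero diagonal and the term $\tfrac{{\rm{Diag}}(\bm{R}-\bm{R}^*)}{2\bm{\Sigma}_{\rm{s}}}$ in \cref{100a} also drops out.

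Next I would pin down the free parameter. In \cref{100a} one has $\bm{\Omega}\in\mathcal{B}_{22}^{\mathcal{S}}$, i.e.\ a diagonal purely imaginary $2\times2$ matrix; since $\bm{U}_{\rm{i}}$ is required to be real and $\bm{U}_{\rm{s}}$ (real) has full column rank, $\bm{U}_{\rm{s}}^T\bm{U}_{\rm{i}}$ is real, which forces $\bm{\Omega}$ to be real, hence $\bm{\Omega}=\bm{O}$. What survives is $\bm{U}_{\rm{i}}=\bm{U}_{\rm{s}}\bigl[{\rm{sym}}(\bm{R}\bm{\Sigma}_{\rm{s}})\odot\bm{\Delta}\bigr]$. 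A short $2\times2$ computation, using that ${\rm{sym}}(\bm{R}\bm{\Sigma}_{\rm{s}})$ is symmetric while $\bm{\Delta}$ (see \cref{6}) is antisymmetric with zero diagonal, shows that their Hadamard product is antisymmetric with zero diagonal, i.e.\ equal to $\gamma\bigl[\begin{smallmatrix}0&1\\-1&0\end{smallmatrix}\bigr]$ with $\gamma=\tfrac{\sigma_1 r_{21}+\sigma_2 r_{12}}{\sigma_2^2-\sigma_1^2}$, where $r_{ij}$ are the entries of $\bm{R}$.

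Multiplying out, $\bm{U}_{\rm{i}}=\bigl[-\gamma\,\bm{U}_{\rm{s}}(:,2)\;\;\;\gamma\,\bm{U}_{\rm{s}}(:,1)\bigr]$, so $\bm{U}_{\rm{i}}(:,1)=\alpha\,\bm{U}_{\rm{s}}(:,2)$ and $\bm{U}_{\rm{i}}(:,2)=\beta\,\bm{U}_{\rm{s}}(:,1)$ with $\alpha=-\gamma$, $\beta=\gamma$, whence $\alpha\beta=-\gamma^2$. For the degenerate case $\sigma_1=\sigma_2$ I would rerun the argument through \cref{19a} of \Cref{the3.1summ}: the single block of $\bm{\widetilde{\Delta}}$ is $\bm{O}_2$, $\bm{\widetilde{\Psi}}=(\bm{R}_{11}-\bm{R}_{11}^T)/(2\sigma_1)$ is skew-symmetric, and $\bm{\widetilde{\Omega}}$ is real skew-symmetric, so once more $\bm{U}_{\rm{i}}=\bm{U}_{\rm{s}}\cdot(\text{real }2\times2\text{ skew-symmetric})$ and the same conclusion follows for every such real CDSVD. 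The one delicate point is the \emph{strict} inequality $\alpha\beta<0$: since $\alpha\beta=-\gamma^2$, this is exactly the nondegeneracy $\gamma\neq0$, equivalently $\bm{U}_{\rm{i}}\neq\bm{O}$, which is precisely the situation of a genuine traveling wave; I would record this hypothesis explicitly and note that, absent it, one can only assert $\alpha\beta\le0$.
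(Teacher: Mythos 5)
Your argument is correct and, at its core, follows the same route as the paper's: the hypothesis ${\rm{Ran}}(\bm{A}_{\rm{i}})\subseteq{\rm{Ran}}(\bm{A}_{\rm{s}})$ kills the projection term in $\bm{U}_{\rm{i}}=\bm{U}_{\rm{s}}\bm{P}+(\bm{I}_m-\bm{U}_{\rm{s}}\bm{U}_{\rm{s}}^T)\bm{A}_{\rm{i}}\bm{V}_{\rm{s}}\bm{\Sigma}_{\rm{s}}^{-1}$, leaving $\bm{U}_{\rm{i}}=\bm{U}_{\rm{s}}\bm{P}$ with $\bm{P}$ a real $2\times2$ skew-symmetric matrix, from which the column pairing and $\alpha\beta=-p_{12}^2$ are immediate. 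The main difference is that the paper never needs the explicit formula for $\bm{P}$: it only uses $\bm{P}=\bm{U}_{\rm{s}}^T\bm{U}_{\rm{i}}$ together with its skew-symmetry coming from the unitary-columns condition (\cref{5555555a,11a}), so it requires no case split between $\sigma_1\neq\sigma_2$ and $\sigma_1=\sigma_2$ and never has to pin down $\bm{\Omega}$; your route through \cref{100a} (and \cref{19a} for the repeated-singular-value case) costs that extra bookkeeping with $\bm{\Delta}$ and $\bm{\Omega}$, which you handle correctly, and in exchange yields the explicit coefficient $\gamma=(\sigma_1 r_{21}+\sigma_2 r_{12})/(\sigma_2^2-\sigma_1^2)$. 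Your closing caveat is well taken: the strict inequality $\alpha\beta<0$ does require the nondegeneracy $\bm{P}\neq\bm{O}$ (equivalently $\bm{U}_{\rm{i}}\neq\bm{O}$, i.e.\ $\gamma\neq0$); the paper's proof simply asserts ``$\bm{P}\neq\bm{O}$'' without deriving it from the stated hypotheses (for instance $\bm{A}_{\rm{i}}=\bm{A}_{\rm{s}}$ satisfies all assumptions yet gives $\bm{P}=\bm{O}$ and $\alpha\beta=0$), so recording that extra hypothesis explicitly, as you propose, is the more careful statement.
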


\begin{proof}
According to the CDSVD, $\bm{U}_{\rm{i}}$ can be expressed as follows based on \cref{5555555a,11a}:
\begin{align}
    \left\{\begin{array}{l}
        \bm{U}_{\rm{i}}=\bm{U}_{\rm{s}}\bm{P}+(\bm{I}_m-\bm{U}_{\rm{s}}\bm{U}_{\rm{s}}^T)\bm{A}_{\rm{i}}\bm{V}_{\rm{s}}\bm{\Sigma}_{\rm{s}}^{-1}\;,\\
        \bm{P}+\bm{P}^T=\bm{O}\;,\bm{P}\neq\bm{O}\;.
    \end{array}\right.\nonumber
\end{align}
Suppose that $\bm{U}_{\rm{s}} = [\bm{u}_1\;,\bm{u}_2],\;\bm{P} = (p_{ij})\in\mathbb{R}^{2\times2}$, then $p_{11} = p_{22} = 0$, $p_{12}+p_{21}=0$ and $p_{12},p_{21}\neq 0$ due to the skew-Hermitian and nonzero $\bm{P}$. 
In addition, there exists $\bm{M}\in\mathbb{R}^{n\times n}$ such that $\bm{A}_{\rm{i}}=\bm{A}_{\rm{s}}\bm{M}=\bm{U}_{\rm{s}}\bm{\Sigma}_{\rm{s}}\bm{V}_{\rm{s}}^T\bm{M}$ from the properties of $\bm{A}_{\rm{i}}$. Hence, we have
\begin{align}
\bm{U}_{\rm{i}}&=\bm{U}_{\rm{s}}\bm{P}+(\bm{I}_m-\bm{U}_{\rm{s}}\bm{U}_{\rm{s}}^T)\bm{A}_{\rm{i}}\bm{V}_{\rm{s}}\bm{\Sigma}_{\rm{s}}^{-1}\nonumber\\
    &=\left[\begin{array}{cc}
       \bm{u}_1  & \bm{u}_2  
    \end{array}\right]\left[\begin{array}{cc}
       0  & p_{12}\\
       p_{21} & 0
    \end{array}\right]+(\bm{I}_m-\bm{U}_{\rm{s}}\bm{U}_{\rm{s}}^T)\bm{U}_{\rm{s}}\bm{\Sigma}_{\rm{s}}\bm{V}_{\rm{s}}^T\bm{M}\bm{V}_{\rm{s}}\bm{\Sigma}_{\rm{s}}^{-1}\nonumber\\
    &= \left[\begin{array}{cc}
       p_{21}\bm{u}_2  & p_{12}\bm{u}_1  
    \end{array}\right]\nonumber\;,
\end{align}
which implies that $\bm{U}_{\rm{i}}(:,1) = p_{21}\bm{U}_{\rm{s}}(:,2)\;,\;\bm{U}_{\rm{i}}(:,2)= p_{12}\bm{U}_{\rm{s}}(:,1)$. In addition, $p_{21}p_{12}<0$. This completes the proof.
\end{proof}

The above proposition reveals pairs of similarities in a traveling wave because its corresponding dual matrix is real rank-2 and the column space of its infinitesimal part is contained in that of its standard part. This achieves the unification of theory and practice. Furthermore, we discuss whether the property can be reserved and used for identifying traveling waves in combination waves. 

To further substantiate the effectiveness of \Cref{pro6.1} in traveling wave identification, we construct a combination wave and aim to separate its component waves. Here, four standing waves and two traveling waves are combined together with the same standard deviation $\sigma = 1$ but different angular frequencies and weights. Gauss noise with peak value $10^{-3}$ is also added. 
Thus, a combination matrix is generated as the standard part of a dual matrix $\bm{Y}$, and its first-order difference in relation to time forms the infinitesimal part of $\bm{Y}$. After computing the CDSVD of $\bm{Y}$, $\bm{U}$ is used to identify the six different waves. As we theorized above, $\bm{Y}_{\rm{s}}$ has 8 nonzero singular values. \Cref{fig: simulation wave}c illustrates the corresponding maps of 8 columns of $\bm{U}_{\rm{s}}$ and $\bm{U}_{\rm{i}}$. The first four columns of $\bm{U}$ identify the relevant four most weighted standing waves. The point with the largest absolute value in every map of $\bm{U}_{\rm{s}}$ is the central position of the standing wave, and the value of corresponding $\bm{U}_{\rm{i}}$ is relatively small. 
Hence, (50,50), (100,100), (150,70), and (170,180) are the peaks of the four standing waves. The fifth and the sixth columns of $\bm{U}$ satisfy the similarity property in \Cref{pro6.1} intuitively, which reveals that these two form a traveling wave. (50,100) and (100,50) are the peaks of this traveling wave, since these two match the largest absolute values in the maps of the fifth and sixth vectors of $\bm{U}_{\rm{s}}$. Similarly, (120,150) and (70,150) are the peaks of the other traveling wave indicated by the seventh and eighth vectors of $\bm{U}_{\rm{s}}$.

As a consequence, for the first time, we propose a method of identifying traveling waves and extracting their propagation paths. To the best of our knowledge, existing methods \cite{bolt2022brainwave,feeny2008complex} only defined an index to measure what percentage of a wave is a traveling wave while being unable to separate the exact traveling wave from the original one.

\subsection{Applications to Time-Series Analysis}

We have proffered how to use pairs of similarities between columns of $\bm{U}$ under the CDSVD in \Cref{pro6.1} to extract traveling waves in simulated experiments. Here, we aim to verify the applicability of the above method and further validate the role of infinitesimal parts in a classical visual foreground-background separation problem.  

A road monitoring video showing the movement of cars and pedestrians at the intersection is taken as an example, whose resolution is 320×240 pixels \cite{intersectionVideo}.

\begin{figure}
    \centering
    \includegraphics[scale = 0.39]{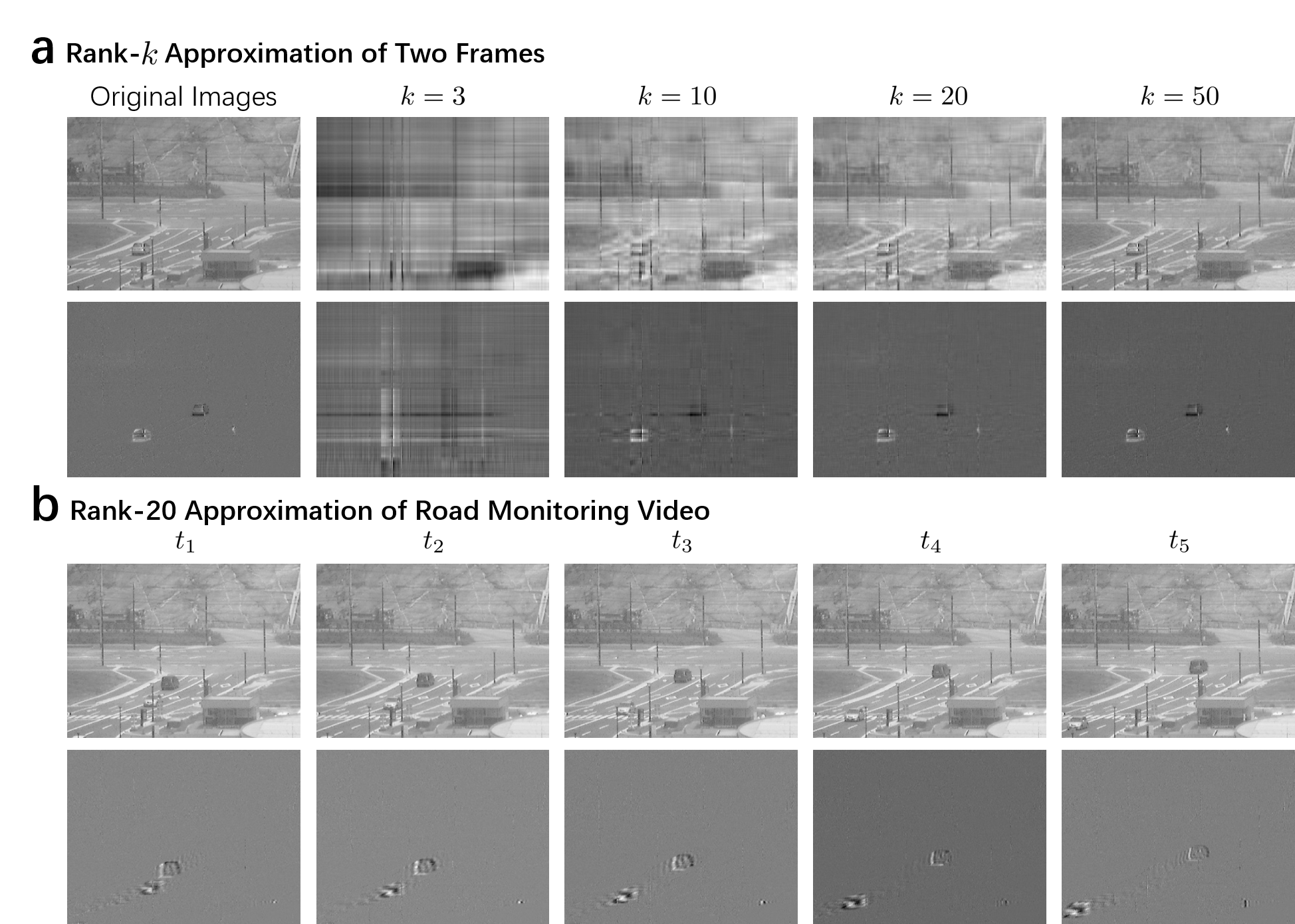}
    \caption{Rank-$k$ approximation of two frames under the quasi-metric directly illustrates an origin and an end of the car's motion. Rank-20 approximation of the road monitoring video under the quasi-metric separates the moving foreground from the fixed background and further describes the process of two cars going away from each other.}
    \label{fig:F-B separation}
\end{figure}

First, two photos at different moments are extracted from the video and our goal is to separate the still background and the moving foreground and to witness the outcome of movements during this time. A dual real matrix $\bm{A}$ is constructed, whose standard part, $\bm{A}_{\rm{s}}$, is the gray-scale matrix of the photo at the previous moment and infinitesimal part, $\bm{A}_{\rm{i}}$, is the differences of gray-scale matrices between the two moments. Thus, we use \Cref{alg:lowrank} to compute the optimal rank-$k$ approximate dual real matrices of $\bm{A}$, $\bm{A}_k$, under the quasi-metric. The first column in \Cref{fig:F-B separation}a presents $\bm{A}_{\rm{s}}$ and $\bm{A}_{\rm{i}}$ separately. Other columns illustrate the results of the low rank approximation with different rank-$k$, where the first row shows the standard parts of $\bm{A}_k$ and the second row shows their corresponding infinitesimal parts reporting the result of movements during this time. 
When $k=50$, the infinitesimal part of $\bm{A}_k$ shows the moving foreground, in which there is a white car and a black car representing the initial and the final positions respectively. 
In addition, the small white point stands for the initial position of the pedestrian but the absence of a small black point means that the person is out of sight at the last moment.

Furthermore, we aim to extract a video of the road condition information that contains only moving vehicles and pedestrians. We first capture 5,000 consecutive frames of photos in the original video, and the gray-scale matrix of each frame is then straightened into a column vector, all the 5000 vectors are concatenated into a matrix $\widehat{\bm{B}}$ with size 76,800-by-5,000. Next, we use the matrix $\widehat{\bm{B}}$ to construct a dual matrix $\bm{B}$, whose standard part $\bm{B}_{\rm{s}}$ is its first 4998 columns and infinitesimal part $\bm{B}_{\rm{i}}$ is its one-sided, second-order approximation of the first derivative respect to time by applying \cite{levy}
\begin{align}
    f^{'}(x) \approx \frac{-3f(x)+4f(x+h)-f(x+2h)}{2h}.\label{6.3}
\end{align}

After calculating the rank-$k$ approximation of $\bm{B}$ under the quasi-norm by using \Cref{alg:lowrank}, we obtain the rank-$k$ approximate dual matrix $\bm{B}_k$, and each column of its infinitesimal part is then reshaped into a matrix of the same size as the original gray-scale matrix. Accordingly, only moving vehicles and pedestrians can be contained when the 4,998 restored images were strung together into a video. The first row in \Cref{fig:F-B separation}b presents some frames of this video at adjacent 6 moments, and the second row manifests the corresponding infinitesimal parts of rank-20 approximate dual matrices. The trajectories of two cars facing away from each other can be clearly seen separately with no cluttered background.

\begin{figure}
    \centering
    \includegraphics[scale = 0.35]{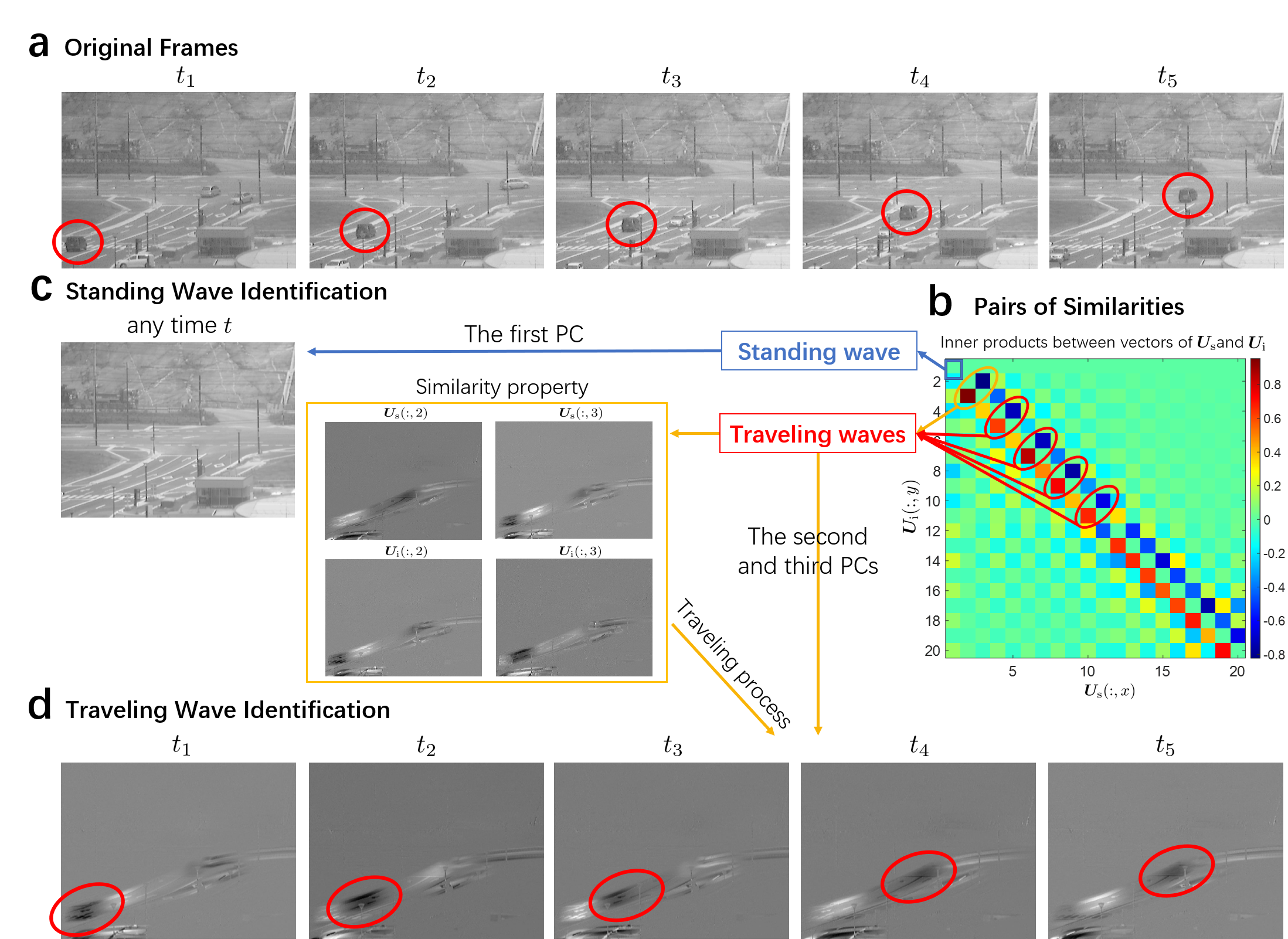}
    \caption{Pairs of similarities are illustrated between vectors of $\bm{U}_{\rm{s}}$ and $\bm{U}_{\rm{i}}$ in intersection data after calculating the CDSVD. A standing wave is identified based on the first PC and a video with only fixed background is extracted. The second and third PCs identify a traveling wave showing the trajectory of a black car corresponding to that presented in original frames.}
    \label{fig:car_traveling}
\end{figure}

By now, we have revealed the important role of infinitesimal parts in the low rank approximation, which extract only moving foreground from a fixed intersection. Additionally, we aim to explore whether there exist traveling waves in the intersection video based on \Cref{pro6.1}. 
For convenience, a dual sub-matrix $\bm{C}$ of $\bm{B}$ containing its continuous 130 columns is employed to identify standing and traveling waves in the calculation of the CDSVD, $\bm{C} = \bm{U\Sigma V}^T$. \Cref{fig:car_traveling}a shows five frames of $\bm{C}_{\rm{s}}$ at successive moments. Naturally, inner products between vectors of $\bm{U}_{\rm{s}}$ and $\bm{U}_{\rm{i}}$ are calculated and displayed in \Cref{fig:car_traveling}b. 
It is evident that inner products between $\bm{U}_{\rm{s}}(:,1)$ and any $\bm{U}_{\rm{i}}(:,y)$ are close to zero, and so are $\bm{U}_{\rm{i}}(:,1)$ and any $\bm{U}_{\rm{s}}(:,x)$, which implies that the first principal component (PC) yields a standing wave and its standard part witnesses the propagation process with invariable still background (see \cref{fig:car_traveling}c). The largest inner product, 0.9536, is reached between $\bm{U}_{\rm{s}}(:,2)$ and $\bm{U}_{\rm{i}}(:,3)$, and the smallest, -0.8211, is between $\bm{U}_{\rm{s}}(:,3)$ and $\bm{U}_{\rm{i}}(:,2)$. Thus, the second and the third columns of $\bm{U}$ illustrate the similarity property. Moreover, the sum of these two PCs contributes to a traveling wave with the propagating process from $\bm{U}_{\rm{s}}(:,3)$ to $\bm{U}_{\rm{s}}(:,2)$ shown in \cref{fig:car_traveling}d, where the movement of the conspicuous black block corresponds to that of the black car in \cref{fig:car_traveling}a.

We shed some light on the method of identifying traveling waves in time-series data. Particularly for a road monitoring video, we distinguish which PC can recognize a standing wave and which two PCs can generate a traveling wave, and then use the similarity property (\Cref{pro6.1}) to derive the moving process in the calculation of the standard part of the sum of those two PCs. It is remarkable that we use only rank-2 matrices to describe the obvious propagation process of the traveling wave, which illustrates the important role of infinitesimal parts.

%Besides, CDSVD can also be used in time-series classification, whose infinitesimal parts are able to provide more information and separate out classes with higher precision, as elaborated in the appendix.

\subsection{Applications to Traveling Wave Identification in the Brain}\label{sec:brain}

We have created an innovative technique to deal with time-series data as well as to locate intrinsic traveling waves, as demonstrated in the prior simulations and the small-scale intersection experiments.

Thereafter, we focus on the Human Connectome Project (HCP) database \cite{HCP} including high-resolution 3T MR scans from young healthy adults (ages 22-35) utilizing task-state fMRI (tfMRI), one of the most important imaging modalities, in which subjects were asked to execute tasks that were intended to activate a variety of cortical and subcortical networks. Each tfMRI scan was split into two runs: one with right-to-left phase encoding and the other with left-to-right phase encoding (in-plane FOV (field of view) rotation was achieved by inverting both the RO (readout) and PE (phase encoding) gradient polarity) \cite{HCPmanual}. 

In order to confirm the correspondence between our theoretical study of traveling wave identification and the empirical response of functional brain regions, we select the typical language processing (semantic and phonological processing) task from all the seven kinds of tasks to explore whether traveling waves are recognized in the brain regions involved in language. This language processing task was developed by Binder et al. \cite{languageTask}. There are 4 blocks of a story task and 4 blocks of a math task in this experiment. The story task presents brief auditory stories (5-9 sentences) followed by a 2-alternative forced-choice question that asks about the topic of the story and the math task also presents trials aurally and requires subjects to complete addition and subtraction problems.

\begin{figure}
    \centering
    \includegraphics[scale = 0.392]{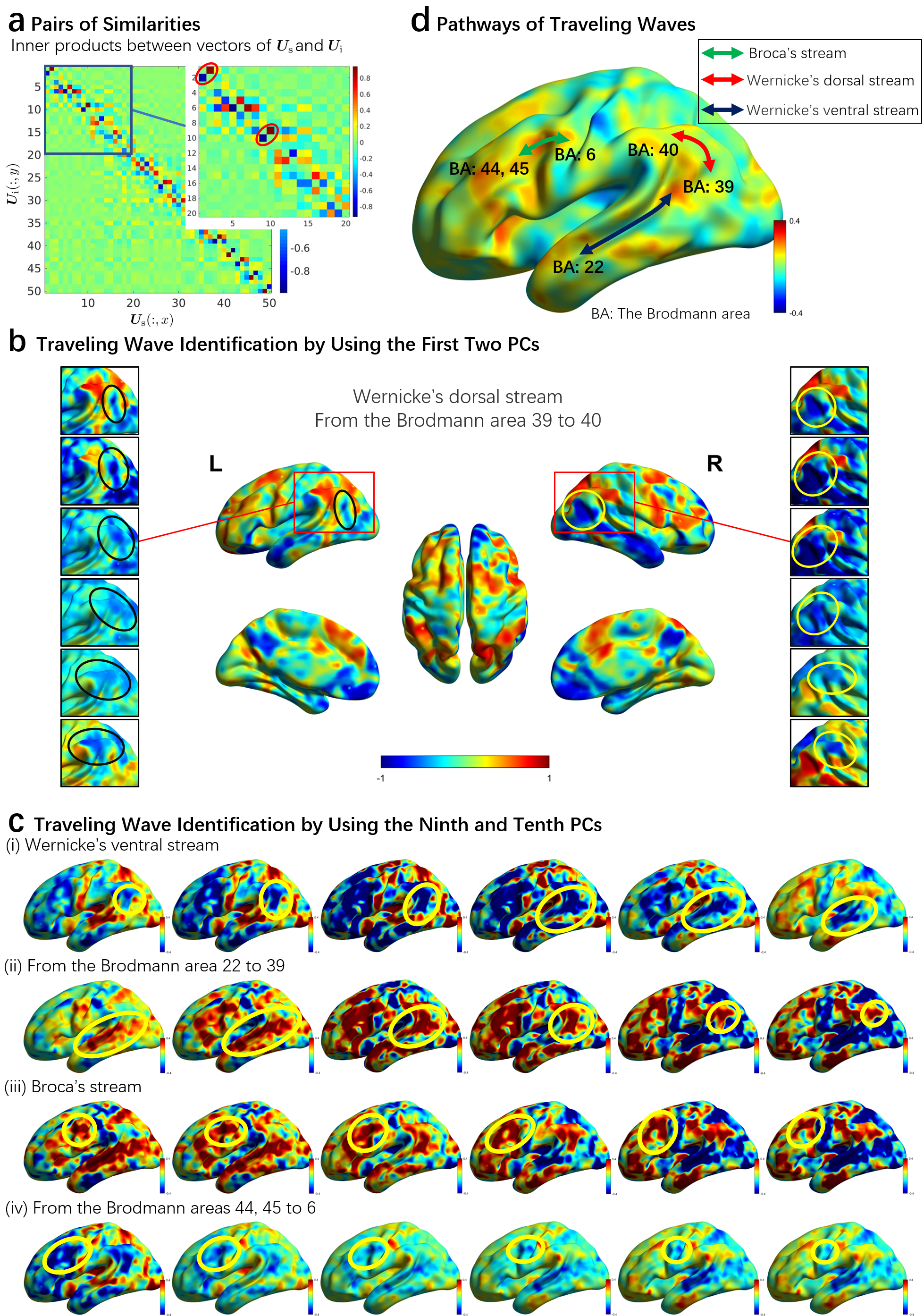}
    \caption{Pairs of similarities are explicated between vectors of $\bm{U}_{\rm{s}}$ and $\bm{U}_{\rm{i}}$ in the brain tfMRI data with language processing task after calculating the CDSVD. The first two PCs identify a stream from the Brodmann area 39 to 40, which are both parts of Wernicke's area. In addition, compared to the right stream, the left stream is more pronounced verifying that the left hemisphere is the main processing system of language. Then the ninth and tenth PCs identify two kinds of traveling waves involving Brodmann areas 22, 39, 44, 45 and 6. In summary, paths of traveling waves including Wernicke's dorsal and ventral streams as well as Broca's stream are described, which are all located in brain regions related to the language task.}
    \label{fig:brain_traveling}
\end{figure}

One subject is randomly selected in our experiment. In his corresponding tfMRI sequential data with language processing task, a 4-D voxel-based image including 91-by-109-by-91 3-D spatial position signals and 316 frames is provided, whose voxel size is 2-mm and repetition time $\text{TR}=0.72$ seconds. Data pre-processing is essential so that the image matrix can be lean but still contain a lot of information. The tfMRI signals were temporally filtered to the conventional low-frequency range using a Butterworth band-pass zero-phase filter (0.01–0.1 Hz) and were then spatially smoothed with a 5-mm full width at half max of 3-dimensional Gaussian lowpass filter using convolution and ignoring the NaNs. To unify magnitudes of the same voxel in different frames, we then Z-scored (to zero mean and unit variance) the BOLD time series from all voxels. 

In the second stage, we aim to construct a dual matrix taking advantage of the original brain sequential data matrix and then corroborate the consistency between brain regions where evident traveling waves are extracted and the corresponding cerebral cortex function. After straightening the first three spatial dimensions into a column vector for each frame, a real matrix is generated when the fourth time dimension is spliced sequentially, which is regarded as the standard part of a dual matrix $\bm{D}$. Similarly, the first-order difference of the standard part about time by applying \cref{6.3} is regarded as the infinitesimal part of $\bm{D}$. In our calculation of the CDSVD $\bm{D} = \bm{U \Sigma V}^T$ using \Cref{alg:CDSVD}, inner products between vectors of $\bm{U}_{\rm{s}}$ and $\bm{U}_{\rm{i}}$, especially for the first 20 pairs, are emphasized in \Cref{fig:brain_traveling}a. It can be seen that pairs of approximate opposite numbers are shown between both sides of the diagonal line, which is interpreted as a traveling wave based on \Cref{pro6.1}. In particular, the inner product between $\bm{U}_{\rm{s}}(:,1)$ and $\bm{U}_{\rm{i}}(:,2)$ is -0.8643, and between $\bm{U}_{\rm{s}}(:,2)$ and $\bm{U}_{\rm{i}}(:,1)$ is 0.9075, which makes the standard part of the sum of the first two PCs forms an obvious traveling wave. Making use of this discovery, the rank-2 matrix generated by these first two PCs is reshaped to match the original size, and then projected onto the brain applying "BrainNet" in MATLAB \cite{BrainNet}. 
After combining these brain image frames in series, a video is produced, and the change of color blocks reflects that of signals and brain activities. 
The prominent traveling process of signals is extracted in \Cref{fig:brain_traveling}b, and one of the blocks of the math task is underway during this time. 
For both hemispheres, it can be observed that the traveling waves start in Brodmann area 39 (angular gyrus) and end in Brodmann area 40 (supramarginal gyrus) \cite{Brodmann}. More precisely, these two Brodmann areas are both parts of Wernicke's area participating in language comprehension as well as language production, specifically the comprehension of speech sounds, which exactly matches the language task. On the other side, the traveling wave covered in the left hemisphere is more pronounced in contrast, which validates that the left hemisphere is the main processing system of language.

Subsequently, although the first two PCs contribute the vast majority of the original data matrix and demonstrate inner products with relatively large absolute values, the ninth and tenth PCs lead to the maximum and minimum inner products, which are $0.9495$ and $-0.8751$ as seen in \Cref{fig:brain_traveling}a. Likewise, these two PCs generate a rank-2 matrix and then can be restored as a sequential video of brain signals. Two kinds of traveling waves in the left hemisphere are obviously indicated in \Cref{fig:brain_traveling}c. One (see (i) and (ii)) is bi-directionally propagating between the Brodmann areas 22 and 39, where area 22 is the superior temporal gyrus (part of Wernicke’s area) and involves complex language and auditory processing. The other (see (iii) and (iv)) is between Brodmann areas 44, 45 and 6, where areas 44 (pars opercularis) and 45 (pars triangularis) are both parts of Broca's area associated with the praxis of speech. In addition, area 6 (premotor cortex) gets involved in language, including language processing and switching as well as syntactical processing \cite{BrodmannLanguageRelatedAreas}.

Accordingly, we summarize the three kinds of traveling waves described above and then introduce corresponding pathways in \Cref{fig:brain_traveling}d, where the green pathway between the Brodmann area 6 and 44, 45 is termed as Broca's stream due to its coverage areas. Besides, the red and blue pathways both propagate across Wernicke's area, thereby, they have named Wernicke's dorsal and ventral streams respectively in view of their relative positions in the brain. These results are in agreement with previous findings using structural connectivity techniques \cite{BrodmannLanguageRelatedAreas,LanguagePathway} and support the integrative strength of Broca's area and Wernicke's area in terms of language.

\section{Conclusions}\label{sec:conclusions}
In this paper, we propose the compact dual singular value decomposition (CDSVD) of dual complex matrices along with the necessary and sufficient conditions for its existence as well as its explicit expressions and convenient algorithms. For the CDSVD, other essential properties of dual matrices are investigated. To illustrate the significance of infinitesimal parts of dual matrices, we apply the CDSVD to time-series data and reveal how the data evolves over time, in particular, leading to traveling wave identification.

A notable theoretical result of our study is providing new insights into the SVD of dual matrices. Throughout the existing theoretical proofs and algorithms, they either require dual matrices with particular structures (e.g., full-column rank \cite{pennestri2018moore}), resulting in redundancy and costs (e.g., solving a redundant linear system involving Kronecker products \cite{pennestri2018moore}) or increase the complexity (e.g., using eigenvalue decomposition of one's Gram matrix to calculate its SVD \cite{pennestri2007linear,qi2022low}). Nevertheless, our CDSVD addresses the above problems and achieves better results.  We further present the rank-$k$ approximation based on the CDSVD after defining the rank of and metrics for dual matrices. In addition, we provide the existing conditions and explicit expression of the dual Moore-Penrose generalized inverse, which coincide with recent results \cite{wang2022dual}.

Another core finding of our study is how to deal with time-series data, identify intrinsic traveling waves and then extract their trajectories. We start with constructing a dual matrix using the original sequential data. The standard part is a real matrix with sampling points as rows and time points as columns, and its derivative or first-order difference is the infinitesimal part. The second step is to explore which two principal components generate traveling waves after calculating the CDSVD. Inner products reaching the maximum and minimum values between standard parts and infinitesimal parts of singular vectors unearth the most obvious traveling wave. Moreover, only a rank-2 matrix is used to describe the propagating process over time which costs less but contains more. It is worth noting that traveling wave identification in the brain tfMRI data with language processing task extracts three kinds of streams, which are all explored in classical brain regions related to language.

In summary, we first provide proof at the element level and an effective algorithm without redundant structures for the CDSVD of dual complex matrices. Then we recognize traveling waves in the brain based on our proposed CDSVD and verify the functional brain regions associated with the language task from our theoretical results and the existing literature. It remains to be elucidated the direction of our future research. Although our proposed CDSVD possesses a variety of prominent properties theoretically and practically, it still leaves room for improvement. For example, we can explore the case where zero singular values participate in the perturbation when the infinitesimal part of a dual matrix is considered as a perturbation of its standard part. Moreover, our method can be further used to validate or explore other streams, which will be of great advance to scholars in related research. 

\section*{Acknowledgments}
Data (in \cref{sec:brain}) were provided by the Human Connectome Project, WU-Minn Consortium (Principal Investigators: David Van Essen and Kamil Ugurbil; 1U54MH091657) funded by the 16 NIH Institutes and Centers that support the NIH Blueprint for Neuroscience Research; and by the McDonnell Center for Systems Neuroscience at Washington University.

The authors would like to thank the handling editor Eric K.-w. Chu and two anonymous reviewers for their valuable suggestions on our article.

\bibliographystyle{siamplain}
\bibliography{references}
\end{document}

% --- supplement: ex_supplement.tex ---

\maketitle

\section{A detailed example}

Here we include some equations and theorem-like environments to show
how these are labeled in a supplement and can be referenced from the
main text.
Consider the following equation:
\begin{equation}
  \label{eq:suppa}
  a^2 + b^2 = c^2.
\end{equation}
You can also reference equations such as \cref{eq:matrices,eq:bb} 
from the main article in this supplement.

\lipsum[100-101]

\begin{theorem}
An example theorem.
\end{theorem}

\lipsum[102]
 
\begin{lemma}
An example lemma.
\end{lemma}

\lipsum[103-105]

Here is an example citation: \cite{KoMa14}.

\section[Proof of Thm]{Proof of \cref{thm:bigthm}}
\label{sec:proof}

\lipsum[106-112]

\section{Additional experimental results}
\Cref{tab:smfoo} shows additional
supporting evidence. 

\begin{table}[htbp]
\footnotesize
  \caption{Example table.}\label{tab:smfoo}
\begin{center}
  \begin{tabular}{|c|c|c|} \hline
   Species & \bf Mean & \bf Std.~Dev. \\ \hline
    1 & 3.4 & 1.2 \\
    2 & 5.4 & 0.6 \\ \hline
  \end{tabular}
\end{center}
\end{table}

\bibliographystyle{siamplain}
\bibliography{references}